\title[Deep--BSDE schemes for nonlocal PDEs]{\be A Deep BSDE approximation of nonlinear integro-PDEs with unbounded nonlocal operators}
\theoremstyle{plain}
\newtheorem{theorem}{Theorem}[section]
\newtheorem{corollary}[theorem]{Corollary}
\newtheorem{lemma}[theorem]{Lemma}
\theoremstyle{definition}
\newtheorem{definition}[theorem]{Definition}
\theoremstyle{remark}
\newtheorem{remark}[theorem]{Remark}
\numberwithin{equation}{section}
\chardef\bslash=`\\ % p. 424, TeXbook
\newcommand{\be}{\color{magenta}}
\newcommand{\ee}{\normalcolor}
\newcommand{\bs}{\color{blue}}
\newcommand{\es}{\normalcolor}
	\newcommand{\integ}[2]{\displaystyle \int_{#1}^{#2}}
	\renewcommand{\sectionmark}[1]{}
\newcommand{\eval}[2][\right]{\relax
	\ifx#1\right\relax \left.\fi#2#1\rvert}
\author[E. R. Jakobsen]{Espen R. Jakobsen}
\address{\parbox{.8\linewidth}{{\normalfont\textbf{E.~R.~Jakobsen}}\medskip\\
Department of Mathematical Sciences, NTNU,\\
7491 Trondheim, Norway\medskip}}
\email{espen.jakobsen@ntnu.no}
\author[S. Mazid]{Sehail Mazid}
\address{\parbox{.8\linewidth}{\normalfont\textbf{S. Mazid}\medskip\\Department of Mathematics, Faculty of Sciences, Ibn Zohr University.\\Agadir, Morocco. \medskip}} \email{s.mazid@uiz.ac.ma}
\date{\today}
\subjclass[2020]{35D40, 35K55, 35K65, 35Q84, 35Q89, 35R11, 47D07, 49L, 49N80, 60G51}	
\keywords{Deep BSDE methods, FBSDEs, machine learning methods, numerical methods, convergence, fully-nonlinear PDEs, degenerate PDEs, nonlocal PDEs, L\'evy processes}
\begin{document}

\maketitle

\begin{abstract}
We introduce and analyze a Deep BSDE scheme for
nonlinear integro-partial differential equation with
unbounded nonlocal operators. 

Such schemes are suitable for high dimensional problems.

Machine learning methods for solving partial differential equations (PDEs)
are hot topics, and different algorithms proposed in the literature show efficient numerical approximation in high dimension. In this article, we introduce and study a deep learning based approximation algorithm for solutions of  high dimensional nonlocal nonlinear partial differential equations. We give an efficient approximation of the nonlocal term without neglecting the small jumps. Our methodologyD extends the approach recently proposed in \cite{HPW20} for semi-linear PDEs. We prove the convergence of the proposed algorithm.
\end{abstract}

\section{Introduction}

\be In this paper we introduce and analyze a Deep BSDE scheme for
nonlinear integro-partial differential equation (integro-PDEs) with
unbounded nonlocal operators. The main result is a series of convergence results for the method. The PDEs are of the form\ee
	\begin{equation}\label{PIDE}
\left\{
\begin{array}{l}
\displaystyle\frac{\partial u}{\partial t}(t,x)+\tilde{\mathcal{L}}[u](t,x)+f(t,x,u,\sigma\nabla u,\mathcal{B}[u])=0, \;\text{for}\;  (t,x)\in[0,T)\times\mathbb{R}^q \\  \\ u(T,x)=g(x), \;\text{for}\; x\in\mathbb{R}^q,
\end{array}
\right.
\end{equation}	
where $\tilde{\mathcal{L}}$ is \be a (degenerate) elliptic \ee 
%second-order 
 integral-differential operator, 
\begin{equation}\label{L}
\tilde{\mathcal{L}}u=b\cdot D_xu+\frac{1}{2}Tr(\sigma\sigma^TD_x^2u)+\mathcal{J}u,
\end{equation}
%+\mathcal{J}_1^\epsilon u+\mathcal{J}_2^\epsilon u
and $\mathcal{J}$ and $\mathcal{B}$ are (possibly unbounded) fractional/nonlocal operators
$$\mathcal{J} u(t,x)=\displaystyle\int_{E}\big[u(t,x+\beta(x,e))-u(t,x)-D_xu(t,x)\beta(x,e) \big]\nu(de),$$
$$\mathcal{B}[u](t,x)=\int_{E}\big[u(t,x+\beta(x,e))-u(t,x)\big]\gamma(e)\nu(de),$$
where $E = \mathbb{R}^q\setminus\{0\}$. 
%is equipped with its Borel field $\mathcal{E}$.
\be Our assumptions cover the generators $\tilde{\mathcal{L}}$ of all
$L^2$ Levy processes and Levy-driven stochastic differential
equations (SDEs, see Section \ref{sec:FBSDE}), including
%jump-
processes with infinite number of jumps per time interval
where $\mathcal{J}$ and $\mathcal{B}$ become unbounded
operators. %These infinite activity jump-processes include t
The tempered stable Levy processes \cite{Sa:book} 
%the symmetric $\alpha$-stable processes where $\mathcal{J}$ is the fractional Laplacian  $-(-\Delta)^\alpha$, $\alpha\in(0,2)$ \cite{Sa:book}, and many
and most of the popular processes used in Finance like the CGMY
processes \cite{CT:book} are included.\ee
	
Equation \eqref{PIDE} describe phenomena in nature, engineering, economics, and finance. It appears e.g. as Bellman or Isaacs equation in control and game theory \cite{FS:book,Fr:book,CT:book,CDFMV17} -- and in the rapidly developing field of mean field games \cite{ACDPS20,[EJ18]}. Well-posedness can be proven with viscosity solution methods \cite{[BBP97],FS:book}, a generalised solution theory that can handle both nonlinear problems and degenerate equations with non-differentiable solutions.
%, and since they may be degenerate, 
%solutions can develop singularities (in gradients) in finite time. NO I DO NOT THINK SO, THERE IS NO NONLIN GRADIENTS WITHOUT ALSO DIFFUSION
%Since 
%solutions can be non-differentiable. 
%A concept of weak solutions is needed and 
%, but then 
%the question of uniqueness is challenging. 
%By now it is well understood that the 
%correct concept of weak solutions for these problems are viscosity solutions \cite{[BBP97]}. 
Except in rare cases, this equation do not admit closed form solutions and therefore have to be solved numerically. Classical approaches like finite difference, semi-Lagrangian, and finite element methods (see e.g. \cite{[DDGGTZ20],Ta12}) are deterministic and work well in dimensions $q = 1, 2$ and $3$. But since they are grid based, their complexity grows exponentially in the dimension $q$. This is called the curse of dimensionality. 
A classical way to overcome this problem for linear PDEs, is to use Monte Carlo based simulation methods \cite{Gl:book,MT21} and \be Feynman-Kac representations of $u$ as an expectation of the solution of a SDE. \ee

Pardoux and Peng gave representation formulas for solutions of a class of nonlinear PDEs in terms of forward backward systems of SDEs (FBSDEs), see e.g. \cite{PR14}. For local problems these representation formulas have been exploited to create Monte Carlo based simulation methods, see e.g. \cite{[BT04],[Z04],[GLW05],MT21}. 
%These methods rely on computations of certain 
\be But costly computations of conditional expectations limit these methods to dimensions lower than 8 \cite{[BT04]}. \ee
%computations that become  too costly in dimensions  higher than 7 \cite{[BT04]}.
%It has been shown that variants of these methods can work also for some nonlinear problems, at least in dimensions lower than 8 \cite{[BT04],[Z04],[GLW05]}. Here conditional expectations needs to be computed and this is too costly in higher dimensions \cite{[BT04]}. 
A new type of schemes was then developed in \cite{[EHJ17]}, capable of solving 
%based on deep-learning based algorithm able to solve 
numerically (local) PDEs in 100 dimensions. This so-called Deep BSDE scheme involves neural net approximations and avoids computations of conditional expectations.  
%combines FBSDE representation of the solutions of the PDEs, Monte Carlo simulations, and projections onto deep neural nets. 
%Discretising the FBSDE with forward 
\be It is based on Euler discretisation \ee in time, simulating the forward part, and learning the %solution of 
backward part through a global in time minimisation of 
\be the approximation error at the terminal time. \ee
%a loss function \be based on how well the terminal data is approximated. \ee
%, the scheme produces an approximation of the value of $u$ at one point (and its gradient $z=\sigma^{\top} D u$). 
%Based on an Euler discretization of the forward SDE, 
 %the idea is to view the BSDE as a forward $\mathrm{SDE}$, 
 %solving the BSDE forward in time from unkown initial data - the sought after solution $u$ of the PDE,
 %the algorithm learns the values of this $u$ and its gradient $z=\sigma^{\top} D u$ %at each time step 
 %by global in time minimizing of %a %global
 %loss function %between the forward simulation of $u$ till maturity $T$ and
 %based on the difference between $u(x,T)$ and
 %the target $g\left(X_{T}\right).$ 
This type of schemes have then been developed in many papers, see e.g. \cite{HPBL21,BBCJN21,GLV20,HPW20,[SS18],[HL18],TTY22}. In \cite{HPW20} it was observed that the methods of \cite{[EHJ17]} could be stuck in poor local minima or diverge during the optimization/learning stage.
%, and they investigate ways to overcome this problem by 
To overcome this problem, they formulated new methods based on a sequence of local in time optimizations -- each with a dramatically reduced search space.

%defining a local in time loss function and perform the optimization as a sequence of local in time optimizations -- each with a dramatically reduced search space. 

In this paper we will construct an extension of Deep BSDE methods to nonlocal PDEs \eqref{PIDE} and the corresponding FBSDEs which are driven by non-Gaussian Levy processes. In particular, we have chosen to extend the method of \cite{HPW20,[GPW20]}, but extensions of other methods (including \cite{[EHJ17]}) could also be handled following the ideas of this paper.
%
%{\bf Refer to new search algorithms by Hutzenthaler et al and more papers by Pham et al and E et al} {\bf Mention Jentzen and his Piccard iteration methods for high dim PDEs \cite{EHJ19} somewhere!}
% 
%The first results were due to \cite{[EHJ17]} and later this type of schemes have been extended in the local case by many authors,  e.g., \cite{[ONLFOR20],HPW20,[BGJ18],[HJKN20]}.
%
%\be We could mention also paper by Osher arXiv:2011.04757  A Neural Network Approach Applied to Multi-Agent Optimal Control\ee
%
The FBSDE repersentation for \eqref{PIDE} is known from  \cite{[BBP97]} and takes the form
\begin{align}
	\label{JFSDE}
	X_t&=x+\integ{0}{t}b(X_{s})ds+\integ{0}{t}\sigma(X_s)dW_{s}+\int_{0}^{t}\int_{E} \beta(X_{s^-},e)\tilde{\mu}(de,ds),\\
\label{JBSDE}
Y_{t}&=g(X_T)+\integ{t}{T}f(s,X_s,Y_s,Z_s,\Gamma_s)ds-\integ{t}{T}Z_sdW_s-\integ{t}{T}\int_{E}U_s(e)\tilde{\mu}(de,ds),
	\end{align}
	where $\Gamma=\int_{E}U(e)\gamma(e)\nu(de)$ and the stochastic integrals and assumptions are explained in Section \ref{sec:FBSDE}. If $u$ is a sufficiently nice solution of \eqref{PIDE}, then by the It\^{o}'s formula we find that $$Y_t=u(t,X_t),\qquad Z_t=\sigma^T\nabla u(t,X_t), \qquad
	U_t=u(t,X_{t^-}+\beta(X_{t^-},.))-u(t,X_{t^-}).$$
	 At $t=0$ $X_0=x$, and $Y_0=u(0,x)$ (along with $Z_0$ and $U_0$) are deterministic. In other words, since the $T$ and $x$ are arbitrary:
	 \medskip

	  {\it \quad We can find the solution $u$ of \eqref{PIDE} (and its "derivatives") at an arbitrary point $(0,x)$
	  
	  \quad by solving the FBSDE \eqref{JFSDE}-\eqref{JBSDE} for $Y_0$ ($Z_0$ and $U_0$).}
	\medskip 

\noindent We refer again to Section \ref{sec:FBSDE} for precise
statements and a detailed discussion. 

In \cite{[BE08]} the authors
introduced Monte Carlo based simulations methods to solve
\eqref{JFSDE}--\eqref{JBSDE}, but again conditional expectations need to be computed. 
%there are several different methods to approximate these conditional expectations such as 
\be Even though a number of methods have been designed for this, e.g. tree based methods
%, see
\cite{[T21],[MPM22]}, Fourier based methods
%, see e.g.,
\cite{[RO15],[HRC16],[RT04]}, and least-squares Monte Carlo (LSMC)
methods
%, see e.g., 
\cite{[BT04],[FTW11], [CMT10]}, 
high dimensional problems are still out of reach. \ee 
%Nevertheless,
%designing efficient algorithms for high dimensional problems is still
%very challenging.
\be Very recently simulation methods %suitable 
for high dimensional
nonlocal problems were developed in
%When it comes to simulation methods suitable for high dimensional
%nonlocal problems, we only know of very recent work
\cite{[ABDW24],[C20],[GPP22],[RK22],WLGF23}. 
%as far as we know there is only the recent work \cite{[C20]}. 
%However, i
In \cite{[C20]} Levy measures are finite and nonlocal operators bounded,
%$\mathcal J$ in \eqref{PIDE} bounded,
which %reduce the number of applications and
simplifies arguments compared to our paper. There seems to be a mistake in the construction of
the method and some steps in the convergence proof requiring
$L^2$-projections and orthogonality. This issue is present in any FBSDE involving jump-processes, and we overcome it here by introducing a 
further discretisation 
of the jump/Poisson integral terms to get solutions $U$ (in \eqref{JBSDE}) also depending on the jump variable $e$. In \cite{[GPP22],[ABDW24]} the
authors propose extensions of the Deep BSDE solver of
\cite{[EHJ17]} to FBSDEs with jumps, but they have yet to show convergence
of their schemes. An extension of the
deep splitting scheme of \cite{BBCJN21} to nonlocal PDEs is  proposed in \cite{[RK22]}. Here
Levy measures are finite and nonlocal operators bounded, approximate gradients are computed by differentiating neural
network functions (can lead to reduceed accuracy \cite{HPW20}), and the analysis is restricted to %optimization over 
$C^1$
neural networks (excluding e.g. ReLu activation functions). Finally, in \cite{WLGF23} the authors
propose a Deep BSDE method for coupled FBSDEs
with jumps. They prove that 
simulation errors are %can be
bounded by %through 
the value of the objective function, but not the
existence of (near) optimal neural networks which is needed for the
full convergence %of the method 
without the curse of dimensionality. \ee

\be In this paper we propose a Deep BSDE scheme for (\ref{PIDE}) that can accomodate singular Levy measures and unbounded nonlocal operators $\tilde{\mathcal L}$ in \eqref{L}. This means that the driving Poisson noise of the FBSDE \eqref{JFSDE}--\eqref{JBSDE} may have infinite activity (infinitely many jumps per time interval). Such processes can be approximated by finite activity compound Poisson processes obtained by truncating/removing small jumps (smaller than $\epsilon>0$). An improved approximation can be obtained by approximating the small jumps by a suitably chosen ($\epsilon$ depending) Gaussian (Ito) process \cite{[AR01]}, and this is by now one standard way of simulating such processes \cite{PB10,CT:book}. Our method can then be described in the following way:
\begin{itemize}
\item[(i)] In the FBSDEs we approximate small jumps by a Gaussian process (cf. \eqref{FSDE2}--\eqref{FBSDE2}).\smallskip
\item[(ii)] We discretize the resulting FBSDEs in time and their Poisson integrals in space. The result is an Euler-Maruyama type of discretisation in time for the SDE and a quadrature approximation of the BSDE (cf. \eqref{DisSDE}, \eqref{DisBSDE}).\smallskip
\item[(iii)] We simulate the forward SDE part, and compute the BSDE part through a deep neural net regression based on local in time optimisation (cf. Algorithm \ref{ALGO1}). A extension of the deep backward multi-step scheme \cite{[GPW20]} is used for the BSDE part.
\end{itemize}

Out method can be seen as a
%The neural net regression is based on local in time optimisation and can be seen as a 
nonlocal extension of \cite{HPW20,[GPW20]}, and as a Deep BSDE extension of the simulation method for nonlocal problems derived in \cite{[BE08]}. Note however some special features of our method which are not common in previous work on nonlocal models (including \cite{[BE08]}): 
\begin{itemize}
\item[(a)] We consider also infinite activity jump processes, where the Levy measure is singular and the nonlocal operator unbounded. \smallskip
\item[(b)] We do this in a natural way via approximation of small jumps by Gaussian processes.\smallskip
\item[(c)] We compute a (neural net) approximation also of $U$ in \eqref{JBSDE}, as opposed to other papers that only compute an integral of $U$ or indirectly compute $U$ from $Y$. \smallskip
\item[(d)] We discretise the stochastic integral accounting for jumps in the FBSDEs (the $\tilde\mu$-integral) also in space (in $e$). This is needed for part (c).\smallskip
\item[(e)] We give a complete convergence analysis of the scheme. This analysis is strongly linked to our choice of discretisation, see below.
\end{itemize}

% Our could also be seen as a Deep BSDE extension of the simulation method for nonlocal problems derived in \cite{[BE08]} where we also account for infinite activity noise. 

The main results of this paper are a series of convergence results for the scheme, including weak and strong rates of convergence. This analysis is done in two steps, considering discretisation (i) first and then discretisations (ii)--(iii). For step (i) we recall weak convergence results (as $\epsilon\to0$) on the level PDEs obtained in \cite{[JKC08]} via viscosity solution techniques. We also give strong $L^2$-rates of convergence in line with \cite{[A13]}. These results are obtained by a probabilistic Ito-calculus based approach, but there are some problems with the proof that we correct here (Appendix \ref{app:A}). Note that the weak convergence result of \cite{[JKC08]} gives better rates.

In the second step we analyse steps (ii)--(iii). In order to combine this analysis with the results from step (i), we need to obtain precise bounds that are uniform in $\epsilon$. Since the limit $\epsilon\to 0$ can be seen as singular perturbation limit, a precise and careful analysis is needed, an analysis that can only work with a carefully chosen discretisation of the jump part of the FBSDEs (see part (c) and (d) above). Following  \cite{[HL18],HPW20}, we show convergence in $L^2$ by adapting some of the machinery of \cite{[BT04]}. To do that we derive an approximation which is intermediate between the FBSDEs (cf. \eqref{FSDE2}--\eqref{FBSDE2}) and our Deep BSDE approximation (Algorithm \ref{ALGO1}). 
%This scheme resembles an extension to our setting 
This scheme relies on the discrete in time and jump FBSDEs and carefully defined projections and conditional expectations. It shares features with the approximations in \cite{[BE08],[BT04]}, and is a direct extension of the intermediate scheme of \cite{HPW20}. %\cite{HPBL21}. 
%Note the modifications due to the compensated Poisson random measure $\tilde\mu$. 
As opposed to \cite{[BE08]}, we consider unbounded Levy measures and need an additional subdivision of $e$-space and approximation of jumps terms also in ($e$-)space, that allow us to obtain a pointwise in $e$ approximation of the $U$-component of the BSDE (cf. \eqref{FBSDE2}) (in \cite{[BE08]} $U$ appears only through an integral in $e$). The required projections are defined from the pointwise approximation of $U$, and they are only finite in regions away from the small jumps. 
%It it the pointwise approximation of $U$ 
%is %key for 
%along with the discretisation in space of the jump terms that allow us to define the projections. 
In some sense we have come full circle, not only is the truncation of small jumps a practical way of simulation infinite activity processes and integral-PDEs with unbounded nonlocal operators, but it also allows us to construct the first complete convergence analysis for a Deep BSDE scheme in this setting. We finally we remark that the use a multi-step method and ideas from \cite{[GPW20]} allow us to get error estimates that are better behaved with respect to the number of time steps than e.g. in \cite{[HL18],HPW20}.

The remainder of this article is organized as follows: In section \ref{sec:pde} we state conditions and give well-posedness 
%and approximations 
results for 
the integro-PDE \eqref{PIDE}. 
%that we aim to solve numerically in high dimension. 
We 
%recall the  well-posedness/regularity results 
%for such PDEs as well as we and 
discuss on the PDE-level how to approximate \eqref{PIDE} by truncating small jumps and adding a compensating Brownian motion/It\^{o} integral. 
%a vanishing viscosity term -- which corresponds to Levy measure to be bounded measure plus a compensating Brownian motion/It\^{o} integral. 
In section \ref{sec:FBSDE0} we introduce the associated FBSDEs with jumps and recall well-posedness results and how this system is related to \eqref{PIDE}. We introduce the approximation by truncating small jumps and compensating by a Gaussian term, and give an $L^2$ convergence result for this approximation. In section \ref{sec:scheme} we propose our new Deep (multi-step) BSDE discretisation of the FBSDEs with jumps, 
%via the deep backward multi-step approach, 
introduce the intermediate scheme, 
%and we end this section by describing our algorithm
and state the convergence results -- the main results of this paper. Section \ref{sec:pf} is devoted to the proofs of our results. Some proofs are put the appendices in order not to disrupt the flow of the paper.
%We gather in the appendix some proofs in order not to disrupt
%the flow of the paper.
\ee

	\section{Nonlocal nonlinear partial differential equations.}
\label{sec:pde}

In this section we state the (standard) assumptions and give well-posedness and approximation results for the nonlocal nonlinear PDE \eqref{PIDE}.
%We start by listing the working assumptions of this paper:\\ \\
\begin{itemize}
    \item[$(\mathbf{H1})$] (i) The functions $b:\mathbb{R}^q\to\mathbb{R}^{q}$, $\sigma:\mathbb{R}^q\to\mathbb{R}^{q\times d}$, and $\beta:\mathbb{R}^q\times E\to\mathbb{R}^{q}$
are continuous and there exists $K>0$ such that $ \forall x,x',e\in\mathbb{R}^q,$
\begin{equation}
\frac{|b(x)-b(x')|}{|x-x'|}+\frac{|\sigma(x)-\sigma(x')|}{|x-x'|}+\frac{|\beta(x,e)-\beta(x',e)|}{|x-x'||e|}\leq K,
\end{equation}
\begin{equation}\label{Con-Dbeta}
|\beta(x,e)|\leq K(1+|x|)(1\wedge|e|), \quad\text{and}\quad|D_e\beta(x,0)|\leq K.
\end{equation} (ii) $\nu(de)$ is a $\sigma$-finite nonnegative Borel measure on $E$ satisfying
\begin{equation}\label{levy}
\int_{E}|e|^2\nu(de)<+\infty.
\end{equation}
\item[$(\mathbf{H2})$] (i) There exists $C_f > 0$ such that the driver $f:[0,T]\times\mathbb{R}^q\times\mathbb{R} \times\mathbb{R}^{d}\times\mathbb{R}^q\to\mathbb{R}$ satisfies:
$$\frac{|f(t_2,x_2,y_2,z_2,p_2)-f(t_1,x_1,y_1,z_1,p_1)|}{|t_2-t_1|^\frac{1}{2}+|x_2-x_1|+|y_2-y_1|+|z_2-z_1|+|p_2-p_1|}\leq C_f, $$
for all $(t_1,x_1,y_1,z_1,p_1)$ and $(t_2,x_2,y_2,z_2,p_2)\in[0,T]\times\mathbb{R}^q\times\mathbb{R}\times\mathbb{R}^d\times\mathbb{R}^q$. Moreover,
$$\sup_{0\leq t\leq T}|f(t,0,0,0,0)|<\infty.$$
(ii) The function $\gamma:E\to
\mathbb{R}$ is continuous and there exists $C_\gamma, K> 0$ such that
\begin{equation}\label{Condi-gamma}
0\leq \gamma(e)\leq C_\gamma(1\wedge |e|) \quad\text{and}\quad|D_e\gamma(0)|\leq K.
\end{equation}
(iii) There exists $C_g> 0$ such that the function $g:\mathbb{R}^q\to \mathbb{R}$ satisfies, $\forall x,x'\in\mathbb{R}^q$
\begin{equation}
|g(x)-g(x')|\leq C_g|x-x'|.
\end{equation}
\end{itemize}

\begin{remark}\label{rem:as}
(a) Assumption (\ref{levy}) ensures
that underlying FBSDE \eqref{JFSDE}--\eqref{JBSDE} have 2 finite moments, so we can use $L^2$ theory and the proofs simplifies. 
\smallskip

\noindent (b)   All (PDE) results in this section holds if we replace (\ref{levy}) by the general Levy integrability condition $\int_E|e|^2\wedge 1\  \nu(de)<\infty$.
\smallskip

\noindent(c) By the above assumptions, $\beta(x,0)=0=\gamma(0).$
\smallskip

\noindent(d) We will sometimes split the nonlocal operator into singular and nonsingular parts: $$\mathcal{J}u(t,x)=\mathcal{J}_1^\epsilon u(t,x)+\mathcal{J}_2^\epsilon u(t,x)=\int_{|e|\leq \epsilon}\dots+\int_{|e|> \epsilon}\dots.$$
\end{remark}
By \cite{[BBP97]}, we have the following wellposedness result.
\begin{theorem}\label{thm:lip}
Assume (\textbf{H1}) and (\textbf{H2}). Then there exists  a unique viscosity solution $u$  of (\ref{PIDE}). Moreover, there exists $C> 0$ such that for $x,x'\in\mathbb{R}^q$ and $t\in[0,T)$,
\begin{equation}
|u(t,x)-u(t,x')|\leq C|x-x'|.
\end{equation}
\end{theorem}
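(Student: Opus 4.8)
The plan is to prove the theorem through the probabilistic (FBSDE) representation rather than purely analytically, exactly in the spirit of \cite{[BBP97]}. The starting point is the forward equation \eqref{JFSDE}: under the Lipschitz and linear growth bounds in $(\mathbf{H1})$(i) together with the second moment condition \eqref{levy}, the SDE admits a unique square-integrable \cadlag solution $X^{t,x}$ started from $x$ at time $t$, and the flow enjoys the standard stability estimate
\[
\mathbb{E}\Big[\sup_{t\le s\le T}|X_s^{t,x}-X_s^{t,x'}|^2\Big]\le C|x-x'|^2.
\]
I would obtain this by applying It\^o's formula to $|X_s^{t,x}-X_s^{t,x'}|^2$, using the Burkholder--Davis--Gundy inequality for the Brownian integral, the compensated Poisson isometry for the $\tilde{\mu}$-integral (here \eqref{levy} guarantees finiteness), the Lipschitz bound $|\beta(x,e)-\beta(x',e)|\le K|x-x'||e|$ from $(\mathbf{H1})$, and Gronwall's lemma.

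Next I would solve the backward equation \eqref{JBSDE}. Given the forward process, the map sending an input triple to the solution of the BSDE with that driver input is a contraction on the weighted space of adapted processes with norm $\mathbb{E}\int_t^T e^{\lambda s}(|Y_s|^2+|Z_s|^2+\int_E|U_s(e)|^2\nu(de))\,ds$, once $\lambda$ is chosen large. The only nonstandard point is the dependence of $f$ on $\Gamma_s=\int_E U_s(e)\gamma(e)\nu(de)$. Here \eqref{Condi-gamma} gives $\gamma(e)^2\le C_\gamma^2(1\wedge|e|^2)$, hence $\gamma\in L^2(\nu)$ by \eqref{levy}, so that $|\Gamma_s|\le\|\gamma\|_{L^2(\nu)}(\int_E|U_s(e)|^2\nu(de))^{1/2}$ and the $\Gamma$-argument of $f$ is controlled by the $U$-component of the norm. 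Combined with the Lipschitz constant $C_f$ from $(\mathbf{H2})$(i), this keeps the contraction constant below one. I then set $u(t,x):=Y_t^{t,x}$; since the initial $\sigma$-algebra of the flow started at time $t$ is trivial, $Y_t^{t,x}$ is deterministic and $u$ is a well-defined function.

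That $u$ is a viscosity solution of \eqref{PIDE} is the nonlinear Feynman--Kac identity: using the flow (Markov) property of the FBSDE and comparison for BSDEs one shows that $u$ satisfies the sub- and supersolution inequalities against smooth test functions, where the nonlocal terms $\mathcal J$ and $\mathcal B$ arise from the $\tilde{\mu}$-integrals via It\^o's formula. Uniqueness then follows from the comparison principle for nonlocal degenerate-parabolic viscosity solutions under $(\mathbf{H1})$--$(\mathbf{H2})$. Both of these are precisely the statements established in \cite{[BBP97]}, so at this point I would verify that our hypotheses fit their framework and invoke their results rather than reproving them.

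Finally, the Lipschitz estimate in $x$ is the BSDE counterpart of the forward flow estimate. Writing $\delta Y=Y^{t,x}-Y^{t,x'}$ and similarly $\delta Z,\delta U$, the triple $(\delta Y,\delta Z,\delta U)$ solves a linear BSDE with terminal value $g(X_T^{t,x})-g(X_T^{t,x'})$ and driver increment controlled by $C_f$. Applying It\^o to $e^{\lambda s}|\delta Y_s|^2$, taking expectations, and absorbing the $\delta Z$- and $\delta U$-martingale terms (again using $\gamma\in L^2(\nu)$ to dominate $\delta\Gamma_s$), I get
\[
|u(t,x)-u(t,x')|^2=|\delta Y_t|^2\le C\,\mathbb{E}\big[|g(X_T^{t,x})-g(X_T^{t,x'})|^2\big]\le CC_g^2\,\mathbb{E}\big[|X_T^{t,x}-X_T^{t,x'}|^2\big],
\]
and the forward estimate bounds the right-hand side by $C|x-x'|^2$. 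The main obstacle is the $\Gamma$/nonlocal coupling: because $\mathcal B$ (equivalently $\gamma$) may be unbounded, the $\Gamma$-term must be absorbed into the $\int_E|\delta U_s(e)|^2\nu(de)$ contribution coming from the jump part of It\^o's formula, and this is exactly where the $L^2(\nu)$-integrability of $\gamma$ guaranteed by \eqref{Condi-gamma} is essential and where a careless argument would break down.
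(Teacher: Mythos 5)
Your proposal is correct and follows essentially the same route as the paper, which simply invokes \cite{[BBP97]}: the probabilistic construction via the FBSDE \eqref{JFSDE}--\eqref{JBSDE}, the nonlinear Feynman--Kac identity $u(t,x)=Y_t^{t,x}$, comparison for uniqueness, and the Lipschitz bound from the flow estimate combined with a linear BSDE for the difference are exactly the arguments of that reference. Your observation that $\gamma\in L^2(\nu)$ (from \eqref{Condi-gamma} and \eqref{levy}) is what controls the $\Gamma$-coupling is the right point to single out.
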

This viscosity solution can be smooth under additional assumptions, for example by assuming there is $k\in\mathbb{N}$ such that the following assumption holds:
\smallskip
\begin{itemize}
\item[($\mathbf{H1'})_k$] (i) For every $R>0$ there is $C_R>0,$ such that for $t\in[0,T), x\in\mathbb {R}^q, u\in[-R,R],|\alpha|\leq k,$
$$ |D^\alpha_{x,u,p,q}f(t,x,u,p,q)|\leq C_R.$$
(ii) The terminal condition $g\in W^{k,\infty}(\mathbb{R}^q).$\smallskip

\noindent(iii) $\beta$ and $\sigma$ do not depend on $x$.
\end{itemize}
\smallskip

\noindent Then we have the following result from \cite{[EJ18]}.
\begin{theorem}
Assume $k \geq 2$, (\textbf{H1}), (\textbf{H2}) and (\textbf{H1}')$_k$ hold. Then (\ref{PIDE}) has a unique classical solution $u$ satisfying
$$
\partial_{t} u, u, D u, \cdots, D^{k} u \in C_{b}\big((0, T) \times \mathbb{R}^{d}\big)
	$$
	with
	$$
	\left\|\partial_{t} u\right\|_{L^{\infty}}+\|u\|_{L^{\infty}}+\|D u\|_{L^{\infty}}+\ldots+\|D^{k} u\|_{L^{\infty}} \leq C,
	$$
	where $C$ is a constant depending only on $T, q, k,$ and the constants in (\textbf{H1}')$_k$.
\end{theorem}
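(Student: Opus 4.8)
The plan is to upgrade the Lipschitz viscosity solution $u$ from Theorem~\ref{thm:lip} to a classical solution with bounded derivatives by a bootstrap argument: I would differentiate the equation~\eqref{PIDE} repeatedly in $x$ and control each derivative through a comparison principle applied to the resulting linear nonlocal parabolic equation, the whole procedure being justified on a regularized problem and then passed to the limit. The structural input that makes this work is assumption $(\mathbf{H1}')_k$(iii): since $\sigma$ and $\beta$ do not depend on $x$, spatial differentiation commutes with the principal part of $\tilde{\mathcal L}$, i.e. $D^\alpha_x\,\tfrac12\mathrm{Tr}(\sigma\sigma^TD_x^2u)=\tfrac12\mathrm{Tr}(\sigma\sigma^TD_x^2(D^\alpha_xu))$ and, crucially, $D^\alpha_x\mathcal{J}u=\mathcal{J}(D^\alpha_xu)$ and $D^\alpha_x\mathcal{B}[u]=\mathcal{B}[D^\alpha_xu]$. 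Thus no uncontrolled terms are produced by the (unbounded) nonlocal operator when we differentiate, which is exactly what would fail if $\beta$ depended on $x$.

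Concretely, for a multi-index $\alpha$ with $1\le|\alpha|\le k$ set $w=D^\alpha_xu$. Differentiating~\eqref{PIDE} and applying the chain rule to $f$ together with the commutation above, $w$ solves a \emph{linear} nonlocal parabolic equation of the form $\partial_tw+\tilde{\mathcal L}w+c_0w+c_1\cdot\sigma\nabla w+c_2\,\mathcal{B}[w]=h$ with terminal data $w(T,\cdot)=D^\alpha_xg$, where $c_0,c_1,c_2$ are built from first partial derivatives of $f$ (hence bounded by $(\mathbf{H2})$(i) and $(\mathbf{H1}')_k$(i)) and the source $h$ collects $D^\alpha f$ and all strictly lower-order derivatives of $u$. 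By $(\mathbf{H1}')_k$(i)--(ii) the terminal data and the explicit part of $h$ are bounded, and by the induction hypothesis the lower-order derivatives of $u$ entering $h$ are bounded as well.

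I would then close the estimate with the comparison principle for the linear operator: an $L^\infty$ bound $\|w\|_\infty\le e^{CT}\big(\|D^\alpha_xg\|_\infty+T\|h\|_\infty\big)$ follows, with $C$ controlling the zeroth-order coefficient $c_0$. Inducting on $|\alpha|$ from the base case $|\alpha|=1$ (supplied by the Lipschitz bound of Theorem~\ref{thm:lip}) up to $|\alpha|=k$ yields $\|D^\alpha_xu\|_\infty\le C$ for all $|\alpha|\le k$. The time derivative is then read off algebraically from the equation: once spatial derivatives of order $\le2$ and the nonlocal terms $\mathcal{J}u,\mathcal{B}[u]$ are bounded---the latter via $\int_E|e|^2\nu(de)<\infty$ and the $(1\wedge|e|)$ growth in $(\mathbf{H1})$, $(\mathbf{H2})$ rather than any boundedness of $\mathcal{J}$---one obtains $\|\partial_tu\|_\infty\le C$.

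Finally I would make the differentiation rigorous and secure existence by regularization: mollify $f$ and $g$ and, if needed, truncate the small jumps of $\nu$ to get smooth approximants $u^\delta$ to which classical (Schauder-type) theory applies, derive the bounds above \emph{uniformly} in $\delta$, and pass to the limit, identifying the limit with $u$ through stability of viscosity solutions and the uniqueness in Theorem~\ref{thm:lip}. The main obstacle is precisely this uniformity: every estimate must be independent of the regularization of the \emph{singular} nonlocal operator, so the nonlocal contributions to $c_2\mathcal{B}[w]$ and to $h$ must be dominated using only the $|e|^2$- and $(1\wedge|e|)$-bounds, never the (absent) finiteness of $\nu$. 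A secondary delicate point is that the drift term $b\cdot D_xu$, unlike the diffusion and jump terms, does \emph{not} commute with $D^\alpha_x$, so its contributions to the source $h$ must be organized so that only already-controlled derivatives appear.
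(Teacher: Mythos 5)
First, a point of reference: the paper does not prove this theorem at all --- it is quoted directly from \cite{[EJ18]} --- so there is no in-paper argument to compare yours against, and I can only assess your sketch on its own terms. You have correctly isolated the structural role of $(\mathbf{H1}')_k$(iii): when $\sigma$ and $\beta$ do not depend on $x$, spatial derivatives commute with $\mathcal{J}$ and $\mathcal{B}$, so differentiating the equation produces no uncontrolled commutators from the singular nonlocal operator. That observation is indeed the heart of the matter.

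However, the induction you propose does not close, and the regularization step does not supply the smooth approximants it needs. (i) The linearized equation for $w=D^\alpha_x u$ contains $\partial_pf\,\mathcal{B}[w]$. Because $\nu$ may be singular (only $\int_E|e|^2\nu(de)<\infty$ is assumed) and $\gamma(e)\leq C(1\wedge|e|)$, the only available $L^\infty$ bound is $\|\mathcal{B}[w]\|_\infty\leq C(\|w\|_\infty+\|Dw\|_\infty)$, i.e.\ it costs one derivative \emph{more} than the level $|\alpha|$ you are estimating; a bound by $\|w\|_\infty$ alone would require $\int_{|e|\leq1}|e|\,\nu(de)<\infty$, which is not assumed. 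Your comparison step silently treats $\partial_pf\,\mathcal{B}[w]$ as part of the degenerate-elliptic operator, but that requires the sign condition $\partial_pf\,\gamma\geq0$, which is not among the hypotheses ($(\mathbf{H2})$ only gives Lipschitz $f$); without it the term can neither be absorbed by the maximum principle nor dominated by quantities already controlled at stage $|\alpha|$, so the bootstrap from low to high order fails as stated. (ii) The equation is allowed to be degenerate ($\sigma$ may vanish), so mollifying $f,g$ and truncating small jumps yields no parabolic smoothing and no ``Schauder-type'' existence for the approximate problems; moreover uniform $W^{k,\infty}$ bounds alone do not deliver the continuity of $D^ku$ asserted in the statement. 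The route that actually avoids both problems --- and, as far as I can tell, the one behind the citation --- is probabilistic: under $(\mathbf{H1}')_k$(iii) the derivative of the forward flow $\nabla_xX^{t,x}$ solves a \emph{deterministic} linear ODE (the noise coefficients are constant in $x$), and differentiating the FBSDE \eqref{JFSDE}--\eqref{JBSDE} in $x$ up to order $k$ yields linear BSDEs with bounded coefficients for the successive derivatives of $(Y,Z,U)$; standard BSDE a priori estimates then give the $W^{k,\infty}$ bounds with no loss of derivatives, and the It\^{o} formula identifies $u(t,x)=Y_t^{t,x}$ as a classical solution. If you insist on a purely analytic argument, a Duhamel/fixed-point formulation is needed rather than a derivative-by-derivative comparison bootstrap.
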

\smallskip

We now discuss how to approximate (\ref{PIDE}) by a problem with nonsingular nonlocal operator $\mathcal J$, or equivalently, with bounded Levy measure $\nu$. A naive idea is to remove the singular part of $\nu$,
%the Levy measure, 
and thereby the small jumps of the driving process: Replacing $\nu(de)$ by $\nu(de) 1_{|e|>\epsilon}$ and $\mathcal J$ in (\ref{PIDE}) by $\mathcal{J}_2^\epsilon$ defined in Remark \ref{rem:as} (d). The resulting approximation of \eqref{PIDE} converges as $\epsilon\to 0$ \cite{[JKC08]}, but %the convergence is 
slowly when small jumps are important, as for all $\alpha$-stable like processes and in most commonly used models in finance %like VG, CGMY, etc. 
\cite{CT:book}.
 A better approximation is obtained by approximating small jumps by 
 %Brownian motion/
 an It\^{o} integral \cite{[AR01]}. On the level of operators and PDEs, this means that we replace $\mathcal{J}_1^\epsilon$-term
in (\ref{PIDE}) by the local diffusion term \cite{CJ22} (see also \cite{CT:book,[JKC08]}) $\frac
1{2}Tr[\sigma_\epsilon\sigma_\epsilon^T D^2_xu]$ where 
\begin{align}\label{def-Sigma}
\sigma_\epsilon = D_e\beta(x,0) \Sigma^{\frac{1}{2}}_\epsilon\qquad\text{and}\qquad \Sigma_\epsilon=\int_{|e|\leq \epsilon}ee^T\nu(de).
\end{align}
The resulting two approximations of (\ref{PIDE}) then becomes
\begin{equation}
\label{PIDE2}\left\{
\begin{array}{l}
\displaystyle\frac{\partial u^\epsilon}{\partial t}(t,x)+\tilde{\mathcal{L}}^\epsilon[u^\epsilon](t,x)+f(t,x,u^\epsilon,\sigma\nabla u^\epsilon,\mathcal{B}^\epsilon[u^\epsilon])=0 \quad\text{in}\quad [0,T)\times\mathbb{R}^q, \\  \\ u^\epsilon(T,x)=g(x),
\end{array}
\right.
\end{equation}
where \be for $\zeta\in\{0,1\}$,\ee
\begin{align*}
\tilde{\mathcal{L}}^\epsilon[u](t,x)= &\ bD_xu(t,x)+
\frac{1}{2}Tr\big(\big[\sigma\sigma^T
+\zeta\sigma_\epsilon\sigma_\epsilon^T
%+D_e\beta(x,0) \Sigma^{\frac{1}{2}}_\epsilon (\Sigma^{\frac{1}{2}}_\epsilon)^TD_e\beta(x,0)^T
\big]D_x^2u(t,x)\big)
%+\frac{\zeta}{2} Tr\big(D_e\beta(x,0) \Sigma^{\frac{1}{2}}_\epsilon (\Sigma^{\frac{1}{2}}_\epsilon)^TD_e\beta(x,0)^TD_x^2u(t,x)\big)
\\ &+\int_{E^\epsilon}\Big(u(t,x+\beta(x,e))-u(t,x)-D_xu(t,x)\beta(x,e) \Big)\nu(de),\\[0.2cm] \mathcal{B}^\epsilon[u](t,x)=&\int_{E^\epsilon}\big[u(t,x+\beta(x,e))-u(t,x)\big]\gamma(e)\nu(de)+
%(\Sigma^{\frac{1}{2}}_\epsilon)^T D_e\beta(x,0)^T
\zeta\sigma_\epsilon^TD\gamma(0)D_x u(t,x),
\end{align*}
and where
$ E^\epsilon:=\{e\in E : |e|> \epsilon\}$. \be Note that $\zeta=0$ means no additional diffusion term.\ee

\medskip Next, we need to be more precise about the behavior of the Levy measure near the origin:\\ \\
\medskip $(\mathbf{H3})$ There is a density $m : E \to [0,\infty)$ and an $\alpha\in(0,2)$, such that
$\nu(de)=m(e)de$ and
\begin{equation}
m(e)\leq C\frac{1}{|e|^{q+\alpha}}\qquad \text{for}\quad |e|\leq 1.
\end{equation}
This condition \be implies that the (pseudo) differential operator $\mathcal{L}$ has order at most $\alpha$. \ee 
%the maximal order of the differentiation in $\mathcal{L}$ to be $\alpha$. 
We emphasize that the conditions in $(\mathbf{H3})$ are satisfied by all Levy processes used in the literature to model financial markets, see \cite{[RT04]}. Now we give an explicit estimate for the error committed by replacing (\ref{PIDE}) by (\ref{PIDE2}).  The proofs are similar to \cite{[JKC08]}.
	
	\begin{theorem}\label{estim-weak}
Assume $\epsilon>0$, $\zeta\in\{0,1\}$, (\textbf{H1}), (\textbf{H2}), (\textbf{H3}), and let $u$ and $u^\epsilon$ be the
		viscosity solutions of (\ref{PIDE}) and (\ref{PIDE2}) respectively.
\smallskip

\noindent a)  Then for $\epsilon > 0$ small enough, there is $C\geq0$ such that for $(t,x)\in [0,T)\times\mathbb{R}^q$,
		$$|(u-u^{\epsilon})(t,x)|\leq C\be(1+|x|)\ee(\zeta\epsilon^{1-\alpha/3}+(1-\zeta)\epsilon^{1-\alpha/2}).$$
			
\noindent b) Moreover, if (\textbf{H1'}) holds for $k\geq 3$, then there is $C\geq0$ such that
		$$|u-u^{\epsilon}|_0\leq C(\zeta\epsilon^{3-\alpha}+(1-\zeta)\epsilon^{2-\alpha}).$$
	\end{theorem}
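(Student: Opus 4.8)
The plan is to prove both estimates by a \emph{consistency-plus-stability} argument: since $u$ and $u^\epsilon$ solve the same type of equation, with operators differing only on the small-jump region $\{|e|\le\epsilon\}$, I would first measure how badly the solution of one equation solves the other (the consistency error) and then convert this into a bound on $u-u^\epsilon$ through the comparison/stability principle for \eqref{PIDE2} (valid under $(\mathbf{H1})$--$(\mathbf{H2})$). Part (b) is the smooth version of this scheme and part (a) its viscosity version; the gap between the two sets of rates is precisely the loss incurred when one regularizes a non-smooth solution. Concretely, for a smooth $\phi$ set $\mathcal D^\epsilon[\phi]:=(\tilde{\mathcal L}-\tilde{\mathcal L}^\epsilon)[\phi]$, so that by Remark \ref{rem:as}(d) and \eqref{def-Sigma},
\[
\mathcal D^\epsilon[\phi]=\mathcal J_1^\epsilon\phi-\zeta\,\tfrac12\,\mathrm{Tr}\big(\sigma_\epsilon\sigma_\epsilon^T D^2\phi\big).
\]
A second order Taylor expansion of $\phi(x+\beta(x,e))-\phi(x)-D\phi\cdot\beta$, together with $|\beta(x,e)|\le K(1+|x|)(1\wedge|e|)$ and $(\mathbf{H3})$, gives for $\zeta=0$ that $|\mathcal J_1^\epsilon\phi|\le C\|D^2\phi\|_\infty(1+|x|)^2\epsilon^{2-\alpha}$, since $\int_{|e|\le\epsilon}|e|^2\nu(de)\le C\int_0^\epsilon r^{1-\alpha}dr=C\epsilon^{2-\alpha}$. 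For $\zeta=1$ the choice \eqref{def-Sigma} is made precisely so that the second order term $\tfrac12\int_{|e|\le\epsilon}\beta^T D^2\phi\,\beta\,\nu(de)$ of $\mathcal J_1^\epsilon\phi$ cancels $\tfrac12\mathrm{Tr}(\sigma_\epsilon\sigma_\epsilon^T D^2\phi)$ up to an error of order $\int_{|e|\le\epsilon}|e|^3\nu(de)\le C\epsilon^{3-\alpha}$ (using $\beta(x,e)=D_e\beta(x,0)e+O(|e|^2)$ from $(\mathbf{H1})$), while the third order Taylor remainder contributes $C\|D^3\phi\|_\infty\epsilon^{3-\alpha}$. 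Thus, with an additional linear-growth factor from $\beta$,
\[
|\mathcal D^\epsilon[\phi]|\le C(1-\zeta)\|D^2\phi\|_\infty\,\epsilon^{2-\alpha}+C\zeta\big(\|D^2\phi\|_\infty+\|D^3\phi\|_\infty\big)\,\epsilon^{3-\alpha}.
\]
The same expansion applied to $\mathcal B$, with the correction term $\zeta\sigma_\epsilon^T D\gamma(0)D_x\phi$ cancelling the leading moment of $\int_{|e|\le\epsilon}[\phi(x+\beta)-\phi(x)]\gamma(e)\nu(de)$, gives $|\mathcal B[\phi]-\mathcal B^\epsilon[\phi]|$ of the same orders; since $f$ is Lipschitz in its last argument by $(\mathbf{H2})$(i), this enters the residual multiplied by $C_f$.

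For part (b), the solution $u$ of \eqref{PIDE} is classical with $\|D^2u\|_\infty+\|D^3u\|_\infty\le C$ by the regularity theorem above (from \cite{[EJ18]}), and under $(\mathbf{H1'})_k$ the coefficients $\beta,\sigma$ are independent of $x$, so the weight $(1+|x|)$ is absent. Inserting $u$ into \eqref{PIDE2} and using that $u$ solves \eqref{PIDE}, the residual equals $R^\epsilon=-\mathcal D^\epsilon[u]+[f(\cdots,\mathcal B^\epsilon[u])-f(\cdots,\mathcal B[u])]$, so by the previous step $|R^\epsilon|_0\le C((1-\zeta)\epsilon^{2-\alpha}+\zeta\epsilon^{3-\alpha})$. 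Hence $u$ is an approximate solution of \eqref{PIDE2}, and the comparison principle for \eqref{PIDE2} yields $|u-u^\epsilon|_0\le C|R^\epsilon|_0$, which is the claimed estimate (here $k\ge3$ is used exactly to make $\|D^3u\|_\infty$ available in the $\zeta=1$ case).

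For part (a) the solutions are only Lipschitz (Theorem \ref{thm:lip}), so $D^2u$ is unavailable and one must regularize, following the Krylov--Barles--Jakobsen method as in \cite{[JKC08]}. I would replace $u$ (resp.\ $u^\epsilon$) by a sup-/inf-convolution at scale $\delta$, obtaining a semiconvex/semiconcave approximate sub-/supersolution of \eqref{PIDE2} with $\|D^2\cdot\|_\infty\le C\delta^{-1}$ and a sup-norm regularization error of order $\delta$. For $\zeta=0$ the residual is then of order $\epsilon^{2-\alpha}\delta^{-1}$, and the comparison principle gives a bound of the form $C(1+|x|)(\delta+\epsilon^{2-\alpha}\delta^{-1})$; optimizing at $\delta=\epsilon^{1-\alpha/2}$ produces the rate $\epsilon^{1-\alpha/2}$. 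For $\zeta=1$ the residual requires the third order bound $\|D^3\cdot\|_\infty\le C\delta^{-2}$, which a sup-convolution does not provide, so one first shakes the coefficients over a $\delta$-ball (this keeps the shaken function a sub-/supersolution after averaging translates) and then mollifies, obtaining a smooth sub-/supersolution with $\|D^3\cdot\|_\infty\le C\delta^{-2}$ at the cost of an $O(\delta)$ error; the residual is then of order $\epsilon^{3-\alpha}\delta^{-2}$ and optimizing $\delta+\epsilon^{3-\alpha}\delta^{-2}$ at $\delta=\epsilon^{1-\alpha/3}$ gives $\epsilon^{1-\alpha/3}$. Carrying out both one-sided inequalities (regularizing $u$ for one, $u^\epsilon$ for the other) yields the two-sided bound.

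The main obstacle is the regularization in part (a), and especially the $\zeta=1$ case: one must smooth a merely Lipschitz viscosity solution while simultaneously (i) preserving the sub-/supersolution property for the \emph{nonlocal} operator --- where the sup-convolution must be shown to interact correctly with the $\nu$-integral and the $x$-dependent, linearly growing $\beta$ --- and (ii) producing a third-derivative bound of order $\delta^{-2}$, which forces the combined shaking-and-mollification construction rather than a plain sup-convolution. Propagating the linear-growth weight $(1+|x|)$ through the comparison principle on the unbounded domain $\mathbb{R}^q$, by a barrier argument, is the remaining technical point.
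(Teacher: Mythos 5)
Your proposal is correct and follows essentially the same route as the paper, which gives no detailed argument but states that ``the proofs are similar to \cite{[JKC08]}'' --- i.e.\ precisely the consistency-plus-comparison argument in the smooth case and the Krylov/Barles--Jakobsen regularization (shaking the coefficients and mollifying, then optimizing over the regularization parameter $\delta$) in the viscosity case. Your consistency orders $\epsilon^{2-\alpha}$ and $\epsilon^{3-\alpha}$ and the optimizations $\delta+\epsilon^{2-\alpha}\delta^{-1}$ and $\delta+\epsilon^{3-\alpha}\delta^{-2}$ reproduce exactly the stated rates, so the reconstruction is faithful.
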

\begin{remark}
\be The estimate in a) depend linearly on $x$ because coefficients there (as opposed to part b)) are allowed linear growth. \ee
  The rate in b) can not be improved unless $\nu$ is symmetric and $k\geq 4$. In this case the rate is $\epsilon^{4-\alpha}$.
\end{remark}
	\section {Connection  to forward-backward stochastic differential equations with jumps}\label{sec:FBSDE0}

	\subsection{Notation and Preliminaries.}\label{sec:FBSDE}
	Let $(\Omega,\mathcal{F},\{\mathcal{F}_t\}_{0\leq t\leq T},\mathbb{P})$ be a stochastic basis satisfying the usual hypotheses of completeness and right continuity, i.e., $\mathcal{F}_0$ contains all the sets of $\mathbb{P}$-measure zero and $\mathcal{F}_t = \mathcal{F}_{t^+}$. The filtration $\{\mathcal{F}_t\}_{0\leq t\leq T}$ is assumed to be generated by two mutually independent processes, a $d$-dimensional Brownian motion $W_t = (W_t^1,\dots, W_t^d )^T$ and a Poisson random measure $\mu(A,t)$ on $E\times[0,T]$. The compensator and compensated Poisson random measure associated to $\mu$ are denoted by $\lambda(de, dt) = \nu(de)dt$ and $\tilde{\mu}(de, dt) = \mu(de, dt)-\nu(de)dt$. Note that $\{\tilde{\mu}(A\times[0,t])
	%= (\mu-\lambda)(A\times[0,t])
	\}_{0\leq t\leq T}$ is a martingale for all $A\in \mathcal{E}$.
	Given
	$s \leq t$ and $K\subset E$, we use the following spaces for stochastic processes.\medskip
	\begin{enumerate}[a)]
		\item $\mathbb{S}^2_{[s,t]}$ is the set of real valued adapted c\`{a}dl\`{a}g processes $Y$ such that
		$$\|Y\|_{\mathbb{S}^2_{[s,t]}}:=\mathbb{E}\bigg[\sup_{s\leq r\leq t}|Y_r|^2\bigg]^{\frac{1}{2}}<\infty.$$
		\item $\mathbb{H}^2_{[s,t]}$ is the set of progressively measurable $\mathbb{R}^q $-valued processes $Z$ such that
		$$\|Z\|_{\mathbb{H}^2_{[s,t]}}:=\mathbb{E}\bigg[\integ{s}{t}|Z_r|^2dr
  %^{\frac{2}{2}}
  \bigg]^{\frac{1}{2}}<\infty.$$
		\item $\mathbb{L}^2_{\nu,[s,t],K}$
		is the set of $\mathcal{P}\otimes \mathcal{K}$ measurable maps $U : \Omega \times [0,T]\times K \to \mathbb{R}$ such that
		$$\|U\|_{\mathbb{L}^2_{\nu,[s,t],K}} :=\mathbb{E}\bigg[\integ{s}{t}\int_{K}|U_r(e)|^2\nu(de)dr\bigg]^{\frac{1}{2}}<\infty,$$
		where $\mathcal{P}$ and $\mathcal{K}$ are the $\mathcal F_t$-predictable $\sigma$-algebra on $\Omega\times[0,T]$ and Borel $\sigma$-algebra of $K$.\medskip
		\item
		$\mathcal{K}^2_{\nu,[s,t],K}$ is the set of functions $(Y,Z,U)$ in the space $\mathbb{S}^2_{[s,t]}\times\mathbb{H}^2_{[s,t]}\times \mathbb{L}^2_{\nu,[s,t],K}$ with the norm defined by
		$$\|(Y,Z,U)\|_{\mathcal{K}^2_{\nu,[s,t],K}}:=\big(\|Y\|^2_{\mathbb{S}^2_{[s,t]}}+\|Z\|^2_{\mathbb{H}^2_{[s,t]}}+\|U\|^2_{\mathbb{L}^2_{\nu,[s,t],K}}\big)^\frac{1}{2}.$$
		\item $\widetilde{\mathbb{H}}^2_{[s,t]}$  is the set %subset 
		of 
		$\mathbb{H}^2_{[s,t]}$ 
		%of  $\mathcal{F}_s$-measurable
		processes which are constant in $[s,t].$\medskip
		\item
		$\widetilde{\mathbb{L}}^2_{\nu,[s,t],K}$  is the %sub
		set of $\mathbb{L}^2_{\nu,[s,t],K}$ %of $\mathcal{F}_s\otimes \mathcal{K}$-measurable
		processes which are constant in $[s,t]$ and $K.$
	\end{enumerate}
	
	Now recall that in $L^2$ the conditional expectation is an orthogonal projection.
	%We recall now a well known property of conditional expectations, for proof see e.g. \cite[Chp. 3 Thm. 14]{[K02]}.
	\begin{theorem}\label{teoremprojection}
		Let $\mathcal{X}\in L^2(\mathcal{F})$ and $\mathcal{G}\subset \mathcal{F}$ be a sub $\sigma$-algebra. There exists a random variable $\mathcal{Y}\in L^2(\mathcal{G})$ such that
		$$\mathbb{E}|\mathcal{X}-\mathcal{Y}|^2=\inf_{\eta\in L^2(\mathcal{G})}\mathbb{E}|\mathcal{X}-\eta|^2.$$
		The minimizer $\mathcal{Y}$ is unique and is given by $\mathcal{Y} = E[\mathcal{X}|\mathcal{G}],$ and 
		$$\mathbb{E}[(\mathcal{X}-\mathcal{Y})\mathcal{Z}]=0\qquad \text{for all}\;\; \mathcal {Z}\in L^2(\mathcal{G}).$$
	\end{theorem}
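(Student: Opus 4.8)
The plan is to recognise the statement as the orthogonal projection theorem in the Hilbert space $H := L^2(\mathcal{F})$ equipped with inner product $\langle A, B\rangle = \mathbb{E}[AB]$, applied to the subspace $M := L^2(\mathcal{G})$. The first step is to check that $M$ is a closed linear subspace of $H$. Linearity is immediate; closedness follows because any $L^2$-limit of $\mathcal{G}$-measurable functions admits an a.s.-convergent subsequence whose limit is again $\mathcal{G}$-measurable. Once this is in place the whole result is a specialisation of the standard projection theorem, and it only remains to identify the projection with the conditional expectation.

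The heart of the argument, and the step I expect to be the main obstacle, is the \emph{existence} of a minimiser. I would let $d = \inf_{\eta\in M}\mathbb{E}|\mathcal{X}-\eta|^2$, pick a minimising sequence $(\eta_n)\subset M$, and use the parallelogram law on $\mathcal{X}-\eta_n$ and $\mathcal{X}-\eta_m$:
$$\mathbb{E}|\eta_n-\eta_m|^2 = 2\,\mathbb{E}|\mathcal{X}-\eta_n|^2 + 2\,\mathbb{E}|\mathcal{X}-\eta_m|^2 - 4\,\mathbb{E}\Big|\mathcal{X}-\tfrac{\eta_n+\eta_m}{2}\Big|^2.$$
Since $\tfrac12(\eta_n+\eta_m)\in M$, the last term is $\geq 4d$, so the right-hand side tends to $2d+2d-4d=0$; hence $(\eta_n)$ is Cauchy, converges in $L^2$ to some $\mathcal{Y}$, and $\mathcal{Y}\in M$ by closedness while $\mathbb{E}|\mathcal{X}-\mathcal{Y}|^2 = d$ by continuity of the norm. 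Everything after this is routine.

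For the orthogonality characterisation, for fixed $\mathcal{Z}\in M$ the scalar function $t\mapsto \mathbb{E}|\mathcal{X}-\mathcal{Y}-t\mathcal{Z}|^2$ is minimised at $t=0$ because $\mathcal{Y}+t\mathcal{Z}\in M$; differentiating at $t=0$ yields $\mathbb{E}[(\mathcal{X}-\mathcal{Y})\mathcal{Z}]=0$. Uniqueness then follows by noting that if $\mathcal{Y}'$ is another minimiser it satisfies the same orthogonality relation, so $\mathbb{E}[(\mathcal{Y}-\mathcal{Y}')\mathcal{Z}]=0$ for all $\mathcal{Z}\in M$, and taking $\mathcal{Z}=\mathcal{Y}-\mathcal{Y}'$ gives $\mathbb{E}|\mathcal{Y}-\mathcal{Y}'|^2=0$. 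Finally, to identify $\mathcal{Y}$ with $\mathbb{E}[\mathcal{X}\mid\mathcal{G}]$, I would specialise the orthogonality to $\mathcal{Z}=\mathbf{1}_A$ for $A\in\mathcal{G}$ (which lies in $M$ since $\mathbb{P}$ is a probability measure), obtaining $\mathbb{E}[\mathcal{X}\mathbf{1}_A]=\mathbb{E}[\mathcal{Y}\mathbf{1}_A]$ for every $A\in\mathcal{G}$. As $\mathcal{Y}$ is $\mathcal{G}$-measurable and integrable, this is precisely the defining property of conditional expectation, so $\mathcal{Y}=\mathbb{E}[\mathcal{X}\mid\mathcal{G}]$ a.s.
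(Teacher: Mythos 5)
Your proposal is correct: the paper itself gives no proof of this statement but delegates it to a textbook reference (Krylov), and the argument found there is precisely the Hilbert-space projection argument you give --- closedness of $L^2(\mathcal{G})$, a minimising sequence controlled by the parallelogram law, orthogonality by differentiating $t\mapsto\mathbb{E}|\mathcal{X}-\mathcal{Y}-t\mathcal{Z}|^2$, and identification with $\mathbb{E}[\mathcal{X}\mid\mathcal{G}]$ via indicator functions. All steps check out (with the usual understanding that measurability of the $L^2$-limit and uniqueness are meant up to a.s.\ equality), so there is nothing to add.
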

A proof can be found in e.g. \cite[Chp. 3 Thm. 14]{[K02]}.
	%The theorem tell us that conditional expectation is an orthogonal projection of a random variable $\mathcal{X}$ onto $L^2(\mathcal{G})$.
\begin{definition}\label{def:proj}
 Let  $\pi_ {[s,t]}(Z)$ and  $\pi_ {[s,t],K}(U)$ be the orthogonal $L^2$- projections of $Z$ and $U$ into $ \widetilde{\mathbb{H}}^2_{[s,t]}$ and $ \widetilde{\mathbb{L}}^2_{\nu,[s,t],K}$, 
  \begin{align*}&\mathbb{E}\bigg[\integ{s}{t}\big(Z_r-\pi_ {[s,t]}(Z_r)\big)\mathcal{Z}dr\bigg]=0 && \text{for all}\;\; \mathcal {Z}\in  \widetilde{\mathbb{H}}^2_{[s,t]},\\
  &\mathbb{E}\bigg[\integ{s}{t}\int_{K}(U_r(e)-\pi_ {[s,t],K}(U_r))\,\mathcal{U}\,\nu(de)dr\bigg]=0 && \text{for all}\;\; \mathcal {U}\in  \widetilde{\mathbb{L}}^2_{\nu,[s,t],K}.
  \end{align*}
\end{definition}
	\begin{corollary}\label{cor:proj}
		Let $Z\in\mathbb{H}^2_{[s,t]}$ , $U\in\mathbb{L}^2_{\nu,[s,t],K}$, and $\nu(K)\ne0$, then
		\begin{equation*} \pi_ {[s,t]}(Z)=\frac{1}{t-s}\mathbb{E}\bigg[\int_{s}^{t}Z_rdr|\mathcal{F}_s\bigg]\quad\text{and}\quad\pi_ {[s,t],K}(U)=\frac{1}{(t-s)\nu(K)}\mathbb{E}\bigg[\int_{s}^{t}\int_{K}U_r(e)\nu(de)dr|\mathcal{F}_s\bigg].
		\end{equation*}
	\end{corollary}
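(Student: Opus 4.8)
The plan is to derive both formulas directly from the orthogonality characterization in Definition \ref{def:proj} together with the fact (Theorem \ref{teoremprojection}) that conditional expectation is the orthogonal projection in $L^2$. The essential preliminary observation is an identification of the target spaces: a process in $\widetilde{\mathbb{H}}^2_{[s,t]}$ is progressively measurable and constant in time on $[s,t]$, so its common value equals $Z_s$ and is $\mathcal{F}_s$-measurable; hence $\widetilde{\mathbb{H}}^2_{[s,t]}$ is, up to the factor $(t-s)^{1/2}$, isometric to $L^2(\mathcal{F}_s;\mathbb{R}^q)$. Likewise $\widetilde{\mathbb{L}}^2_{\nu,[s,t],K}$ is identified with $L^2(\mathcal{F}_s;\mathbb{R})$ up to the factor $((t-s)\nu(K))^{1/2}$, and this is the point where the hypothesis $\nu(K)\neq 0$ is used.

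For the $Z$-projection I would write $\pi_{[s,t]}(Z)=P$ for an $\mathcal{F}_s$-measurable random variable $P$ and take an arbitrary test element $\mathcal{Z}\in\widetilde{\mathbb{H}}^2_{[s,t]}$ with (constant, $\mathcal{F}_s$-measurable) value $\zeta$. Since $P$ and $\zeta$ are constant in time, the orthogonality relation of Definition \ref{def:proj} collapses to $\mathbb{E}\big[\big(\int_s^t Z_r\,dr-(t-s)P\big)\zeta\big]=0$ for every such $\zeta$. Applying the tower property to move the inner integral onto $\mathbb{E}[\int_s^t Z_r\,dr\mid\mathcal{F}_s]$ and using that $P$ is already $\mathcal{F}_s$-measurable, this says that $(t-s)P$ and $\mathbb{E}[\int_s^t Z_r\,dr\mid\mathcal{F}_s]$ have the same inner product against every $\mathcal{F}_s$-measurable test variable. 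By the defining property of conditional expectation in Theorem \ref{teoremprojection} this forces $(t-s)P=\mathbb{E}[\int_s^t Z_r\,dr\mid\mathcal{F}_s]$, and dividing by $t-s$ gives the stated formula.

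The $U$-projection is obtained by repeating this computation in the weighted space $L^2(\nu(de)\,dr)$. Setting $\pi_{[s,t],K}(U)=Q$ with $Q$ being $\mathcal{F}_s$-measurable and testing against a constant $\mathcal{U}$ with value $u\in L^2(\mathcal{F}_s)$, Definition \ref{def:proj} reduces to $\mathbb{E}\big[\big(\int_s^t\int_K U_r(e)\,\nu(de)\,dr-(t-s)\nu(K)Q\big)u\big]=0$ for all $u$. The same conditional-expectation step then yields $(t-s)\nu(K)Q=\mathbb{E}[\int_s^t\int_K U_r(e)\,\nu(de)\,dr\mid\mathcal{F}_s]$, and $\nu(K)\neq0$ is precisely what permits dividing to isolate $Q$.

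I do not expect a serious obstacle here, since the corollary is in essence just Theorem \ref{teoremprojection} transported through the time- (and $\nu$-)averaging. The only points requiring genuine care are (a) checking that the candidate right-hand sides indeed belong to $\widetilde{\mathbb{H}}^2_{[s,t]}$ and $\widetilde{\mathbb{L}}^2_{\nu,[s,t],K}$, which follows from conditional Jensen's inequality together with the $\mathbb{H}^2$- and $\mathbb{L}^2_\nu$-bounds on $Z$ and $U$ and the automatic $\mathcal{F}_s$-measurability; and (b) confirming that the constant, $\mathcal{F}_s$-measurable test elements exhaust the target space, so that the orthogonality relation pins down the projection uniquely — which holds because these test elements \emph{are} the whole of $\widetilde{\mathbb{H}}^2_{[s,t]}$ (resp.\ $\widetilde{\mathbb{L}}^2_{\nu,[s,t],K}$) under the identification above.
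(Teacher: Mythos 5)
Your argument is correct and is precisely the derivation the paper leaves implicit: the corollary is presented as an immediate consequence of Theorem \ref{teoremprojection} and Definition \ref{def:proj}, and your identification of $\widetilde{\mathbb{H}}^2_{[s,t]}$ (resp.\ $\widetilde{\mathbb{L}}^2_{\nu,[s,t],K}$) with $L^2(\mathcal{F}_s)$ followed by testing the orthogonality relation against $\mathcal{F}_s$-measurable constants is exactly that intended route. The two points of care you flag (square-integrability of the candidate via conditional Jensen, and that the constant $\mathcal{F}_s$-measurable elements exhaust the target space) are the right ones, so nothing is missing.
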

	\begin{remark}
			Note that  $\mathbb{E}\big[\int_{s}^{t}\big|Z_r-\pi_ {[s,t]}(Z)|^2dr\big]$, $\mathbb{E}\big[\int_{s}^{t}\int_{K}\big|U_r(e)-\pi_ {[s,t],K}(U)\big|^2\nu(de)dr\big]\to 0$ as $|t-s|\to0$ and $diam(K)\to 0$.
			%\be [indicate why]\ee
% 			\begin{equation*}
% 			\begin{array}{l}
% 		\displaystyle\frac{1}{t-s}\mathbb{E}\bigg[\integ{s}{t}\big|Z_r-\pi_ {[s,t]}(Z)|^2dr\bigg]\to 0\qquad\qquad\;\;\text{as}\quad |t-s|\to 0,\qquad\qquad\qquad\\ \\
% 			\displaystyle\frac{1}{(t-s)\nu(K)}\mathbb{E}\bigg[\integ{s}{t}\int_{K}\big|U_r(e)-\pi_ {[s,t],K}(U)\big|^2\nu(de)dr\bigg]\to 0\;\quad\text{as}\; |t-s|, diam(K)\to 0.
% 			\end{array}
% 			\end{equation*}
	\end{remark}
	\subsection {Forward-backward stochastic differential equation with jumps.}\label{sec:FBSDE}
	First we consider the Forward Levy-driven SDE \be on $(t,T)$, \ee
 %\be SHOULD START FROM $s=t$ !!! \ee
	\begin{equation}
	\label{FSDE}
	X_{\be s}=x+\be \integ{t}{s}b(X_{r})\,dr+\integ{t}{s}\sigma(X_r)\,dW_{r}+\int_{t}^s\int_{E} \beta(X_{r^-},e)\,\tilde{\mu}(de,dr).
	\end{equation}
	Under assumption (\textbf{H1}) there exist a unique \be strong (a.s.) solution $X_s\in \mathbb{S}^2_{[t,T]}$ of (\ref{FSDE}) with\ee
	\begin{equation} \label{LG-X}
	\mathbb{E}\be \big[\displaystyle\sup_{t\leq s\leq T}|X_s|^2\big]\ee\leq C(1+|x|^2),\end{equation}
	%for some constant $C$. These results can be found 
	see e.g. \cite[Chp. V, Thm. 7 and 67]{[P04]}. 
	Then we consider the Backward SDE \be on $(t,T)$: \ee
	\begin{equation}
	\label{BSDE}\be Y_{s}=g(X_T)+\integ{s}{T}f(s,X_s,Y_s,Z_s,\Gamma_s)\,ds-\integ{t}{T}Z_s\,dW_s-\integ{t}{T}\int_{E}U_s(e)\,\tilde{\mu}(de,ds),
	\end{equation}
	where $\Gamma=\int_{E}U(e)\gamma(e)\nu(de).$ Under assumptions (\textbf{H1}) and (\textbf{H2}), there exists a unique strong (a.s.) solution \be $(Y, Z,U)\in\mathcal{K}^2_{\nu,[t,T],E}$ \ee of (\ref{BSDE}), and the (deterministic) function  $$u(t, x)=Y_{t}^{t,x},$$ 
	is a viscosity solution of the PIDE (\ref{PIDE}) \be (cf. \cite{[BBP97]}). \ee \be Here and below we denote the solutions by $X_s=X_s^{x,t}$, $Y_s=Y_s^{x,t}$ etc. to indicate the initial condition $X_t=x$. \ee
Moreover if $u$ is a classic solution to (\ref{PIDE}), then by Lemma \ref{u=Y} below, $Y^{t,x}_s=u(s,X_s^{t,x}),\
	Z_s^{t,x}=\sigma^T(X_s^{t,x})\nabla u(s,X^{t,x}_s)\ \text{and}\
	U_s^{t,x}=u(s,X_{s^-}^{t,x}+\beta(X_{s^-}^{t,x},.))-u(s,X_{s^-}^{t,x}).$ 
	
		Next following  (e.g.) \cite{[AR01],[A13]} we approximate these equations replacing the infinite activity jump measure by one of finite activity (truncating small jumps), possibly compensating  by adding a suitably scaled Brownian motion $\widetilde{W}$ independent of $W$ and $\mu$: For $\epsilon>0$ and $\zeta\in\{0,1\}$,
\be
\begin{align}\label{FSDE2}
X^\epsilon_s &=x+\integ{t}{s}b(X^\epsilon_{r})\,dr+\integ{t}{s}\sigma(X^\epsilon_r)\,dW_{r}+\zeta\integ{t}{s} D_e\beta(X^\epsilon_{r^-},0)\Sigma^{\frac{1}{2}}_\epsilon \,d\widetilde{W}_{r}\\
&\qquad+\displaystyle\int_{t}^{s}\int_{E^\epsilon} \beta(X^\epsilon_{r^-},e)\,\tilde{\mu}(de,dr),\nonumber
\\
\intertext{and}
	\label{FBSDE2}
Y^\epsilon_{s}&=g(X^\epsilon_T)+\integ{s}{T}f(r,X^\epsilon_r,Y^\epsilon_r,Z^\epsilon_r,\Gamma^\epsilon_r+\zeta D_e\gamma(0)\Sigma^{\frac{1}{2}}_\epsilon L^\epsilon_s)\,dr-\zeta\integ{s}{T}L^\epsilon_r\, d\widetilde{W}_r-\integ{s}{T}Z^\epsilon_r\,dW_r\\
&\nonumber\qquad\;-\integ{s}{T}\int_{E^\epsilon}U^\epsilon_r(e)\,\tilde{\mu}(de,dr).
	\end{align}\ee
where  $\Sigma_\epsilon$ is defined in \eqref{def-Sigma} and $\Gamma^\epsilon=\int_{E^\epsilon}U^\epsilon(e)\gamma(e)\,\nu(de)$. By e.g. \cite{[BBP97]}, the FBSDE  (\ref{FSDE2})-(\ref{FBSDE2}) is well-posed under (\textbf{H1}) and (\textbf{H2}).
	\begin{theorem}\label{WP_FBSDE}
		Assume $\epsilon>0$,  $\zeta\in\{0,1\}$, (\textbf{H1}), and (\textbf{H2}). Then there exists a unique strong (a.s.) solution $(X^\epsilon,Y^\epsilon, Z^\epsilon,U^\epsilon{\bs,L^\epsilon\es})\in\be\mathbb{S}^2_{[t,T]}\times\mathcal{K}^2_{\nu,[t,T],E}{\bs \times\mathbb{H}^2_{[t,T]}}$ of (\ref{FSDE2})-(\ref{FBSDE2}).
	\end{theorem}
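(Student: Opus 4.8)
The system \eqref{FSDE2}--\eqref{FBSDE2} is \emph{decoupled}: the forward equation \eqref{FSDE2} does not involve $(Y^\epsilon,Z^\epsilon,U^\epsilon,L^\epsilon)$. The plan is therefore to solve the forward SDE first, and then to treat \eqref{FBSDE2} as a Lipschitz backward SDE with jumps driven by the augmented noise $(W,\widetilde W,\tilde\mu)$ with the now-known process $X^\epsilon$ frozen into the coefficients. Throughout I work on the filtration generated by $(W,\widetilde W,\mu)$, which augments the given one to include the independent Brownian motion $\widetilde W$ when $\zeta=1$; when $\zeta=0$ there is no $\widetilde W$-term and one simply sets $L^\epsilon:=0$, reducing \eqref{FBSDE2} to the BSDE \eqref{BSDE} with jumps restricted to $E^\epsilon$.

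First I would establish existence, uniqueness and a moment bound for $X^\epsilon$. Since $\int_E|e|^2\nu(de)<\infty$ by \eqref{levy}, the truncated measure $\nu(\cdot\cap E^\epsilon)$ is finite, so the $\tilde\mu$-integral in \eqref{FSDE2} is a compensated finite-activity term. The only coefficient not already covered by the hypotheses used for \eqref{FSDE} is the extra diffusion coefficient $x\mapsto D_e\beta(x,0)\Sigma_\epsilon^{1/2}$. Writing $\beta(x,e)=D_e\beta(x,0)e+o(|e|)$ and dividing the $\beta$-Lipschitz bound of (\textbf{H1}) by $|e|$ before letting $e\to0$ gives $|D_e\beta(x,0)-D_e\beta(x',0)|\le K|x-x'|$, so this coefficient is globally Lipschitz, with linear growth controlled via $|\Sigma_\epsilon^{1/2}|\le(\int_{|e|\le\epsilon}|e|^2\nu(de))^{1/2}$. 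Standard Lévy-driven SDE theory (e.g. \cite[Chp. V]{[P04]}) then yields a unique strong solution $X^\epsilon\in\mathbb S^2_{[t,T]}$ satisfying a bound as in \eqref{LG-X}.

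Next I would solve \eqref{FBSDE2} by rewriting it as a standard BSDE with jumps, with combined Brownian coefficient $(Z^\epsilon,L^\epsilon)$ and effective driver
\[
\hat f(r,y,z,u(\cdot),l)=f\Big(r,X^\epsilon_r,y,z,\textstyle\int_{E^\epsilon}u(e)\gamma(e)\nu(de)+\zeta D_e\gamma(0)\Sigma_\epsilon^{1/2}\,l\Big),
\]
and then checking the hypotheses of the Lipschitz BSDE theory of \cite{[BBP97]}. The terminal datum $g(X^\epsilon_T)\in L^2$ because $g$ is Lipschitz by (\textbf{H2})(iii) and $X^\epsilon_T\in L^2$. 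The map $\hat f$ inherits Lipschitz continuity in $(y,z,l)$ directly from $C_f$ in (\textbf{H2})(i) together with $|D_e\gamma(0)|\le K$ and the boundedness of $|\Sigma_\epsilon^{1/2}|$; for the $u$-dependence, the functional $u\mapsto\int_{E^\epsilon}u(e)\gamma(e)\nu(de)$ is a bounded linear functional on $\mathbb L^2_{\nu,[t,T],E}$, since by Cauchy--Schwarz and \eqref{Condi-gamma} one has $\int_{E^\epsilon}\gamma(e)^2\nu(de)\le C_\gamma^2\int_E(1\wedge|e|)^2\nu(de)\le C_\gamma^2\int_E|e|^2\nu(de)<\infty$. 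Hence $\hat f$ is Lipschitz in $(y,z,u,l)$ in the norms of $\mathbb S^2\times\mathbb H^2\times\mathbb L^2_\nu\times\mathbb H^2$.

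With these verifications in hand, the existence and uniqueness theorem for Lipschitz BSDEs with jumps (\cite{[BBP97]}; equivalently a Picard/contraction argument on $\mathcal K^2_{\nu,[t,T],E}\times\mathbb H^2_{[t,T]}$ using the martingale representation for $(W,\widetilde W,\tilde\mu)$ on the augmented filtration) delivers a unique $(Y^\epsilon,Z^\epsilon,U^\epsilon,L^\epsilon)$ in the stated space. The step I expect to be most delicate is this last one: the driver depends simultaneously on $U^\epsilon$ (through $\Gamma^\epsilon$) and on the second Brownian coefficient $L^\epsilon$, so one must set up the equivalent $\mathrm e^{\lambda r}$-weighted norm with $\lambda$ chosen large enough to absorb both Lipschitz contributions and close the contraction. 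The favourable point is that these Lipschitz constants remain finite --- indeed bounded uniformly in $\epsilon$, since $|\Sigma_\epsilon^{1/2}|$ and $\int_{E^\epsilon}\gamma^2\,\nu(de)$ are --- which is precisely what makes the fixed-point argument go through (and which will matter for the $\epsilon$-uniform estimates later).
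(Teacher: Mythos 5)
Your proposal is correct and matches the paper's approach: the paper gives no proof of Theorem \ref{WP_FBSDE}, simply asserting well-posedness ``by e.g.~\cite{[BBP97]}'', and your argument is exactly the verification that citation presupposes --- decoupling, solving the forward L\'evy-driven SDE with the extra Lipschitz coefficient $D_e\beta(x,0)\Sigma_\epsilon^{1/2}$, then treating \eqref{FBSDE2} as a Lipschitz BSDE with jumps on the filtration augmented by $\widetilde W$, with the $u$-dependence of the driver controlled via $\int_{E^\epsilon}\gamma^2\,d\nu<\infty$. The details you supply (finiteness of $\nu$ on $E^\epsilon$, the Lipschitz bound on $D_e\beta(\cdot,0)$, the weighted-norm contraction) are all sound and consistent with what the cited reference requires.
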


	We also have the standard mean-square continuity result for $X^\epsilon$ and $Y^\epsilon$.
	\begin{lemma}\label{WPeps}
		Assume $\epsilon>0$,  $\zeta\in\{0,1\}$, (\textbf{H1}) and  (\textbf{H2}). Then there is $C>0$ independent of $\epsilon,\zeta$, such that for \be $s<s'\leq T$,\ee
		\begin{align}
  \label{moment-Y}
&\be \mathbb{E}\bigg[\sup_{r\in[t,T]}|X^\epsilon_r|^2\bigg]+\mathbb{E}\bigg[\sup_{r\in[t,T]}|Y^\epsilon_r|^2\bigg]\leq  C(1+|x|^2),\\
&\mathbb{E}\bigg[\sup_{r\in[s,s']}|X^\epsilon_r-X^\epsilon_{s'}|^2\bigg]+\mathbb{E}\bigg[\sup_{r\in[s,s']}|Y^\epsilon_r-Y^\epsilon_{s'}|^2\bigg]\leq \be C(1+|x|^2)|s-s'|.\label{MSC-Y}
\end{align}
 
	\end{lemma}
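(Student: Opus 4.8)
The plan is to handle the forward and backward equations in turn, and throughout to keep track that every constant depends only on $T$, $K$, the constants $C_f,C_\gamma,C_g$, and on $\int_E|e|^2\nu(de)$, but never on $\epsilon$ or $\zeta$. This uniformity is the whole point of the lemma, and it rests on two elementary observations that follow from \eqref{levy} and \eqref{Condi-gamma}. First, $Tr(\Sigma_\epsilon)=\int_{|e|\le\epsilon}|e|^2\nu(de)\le\int_E|e|^2\nu(de)<\infty$, so together with $|D_e\beta(x,0)|\le K$ the coefficient of $d\widetilde W$ in \eqref{FSDE2} is bounded by $K\,Tr(\Sigma_\epsilon)^{1/2}\le C$ uniformly in $\epsilon$ and $x$ (it is bounded, not merely of linear growth). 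Second, $\int_{E^\epsilon}|\beta(x,e)|^2\nu(de)\le K^2(1+|x|)^2\int_E(1\wedge|e|)^2\nu(de)\le C(1+|x|)^2$ and $\int_{E^\epsilon}\gamma(e)^2\nu(de)\le C_\gamma^2\int_E(1\wedge|e|)^2\nu(de)\le C$, both uniformly in $\epsilon$. Since $\zeta\in\{0,1\}$ and all $\zeta$-dependent terms are bounded uniformly by these estimates, uniformity in $\zeta$ is automatic.

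For the forward bound in \eqref{moment-Y} I would apply It\^{o}'s formula to $|X^\epsilon_s|^2$, take expectations so that the martingale parts drop out, and estimate the drift, diffusion, Gaussian-compensation and compensated-jump contributions using the linear growth of $b,\sigma$ and the two uniform bounds above. This yields $\mathbb E|X^\epsilon_s|^2\le C(1+|x|^2)+C\int_t^s\mathbb E|X^\epsilon_r|^2\,dr$, and Gronwall gives the pointwise-in-$s$ bound. To place the supremum inside the expectation I would return to the integral equation and apply the Burkholder--Davis--Gundy inequality to each stochastic integral, reusing the pointwise moment bound for the integrands. The modulus \eqref{MSC-Y} for $X^\epsilon$ is then immediate: writing $X^\epsilon_r-X^\epsilon_{s'}$ as the sum of the four integrals over the subinterval and applying Cauchy--Schwarz to the drift and BDG to the three stochastic integrals produces a factor $|s-s'|$ in front of $\int_s^{s'}\mathbb E(\cdots)\,d\tau$, which the moment bound controls by $C(1+|x|^2)$; comparing to the right endpoint rather than the left costs only a harmless factor since $\sup_r|X^\epsilon_r-X^\epsilon_{s'}|\le 2\sup_r|X^\epsilon_r-X^\epsilon_s|$.

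For the backward bound I would run the classical a priori BSDE estimate: apply It\^{o} to $|Y^\epsilon_s|^2$ on $[s,T]$, so that the left side produces the nonnegative energy $\int_s^T\big(|Z^\epsilon_r|^2+\zeta|L^\epsilon_r|^2+\int_{E^\epsilon}|U^\epsilon_r(e)|^2\nu(de)\big)dr$. The terminal term is controlled by $\mathbb E|g(X^\epsilon_T)|^2\le C(1+|x|^2)$ via \eqref{moment-Y}, and the driver by its Lipschitz/linear growth, the only delicate point being its last argument $p=\Gamma^\epsilon_r+\zeta D_e\gamma(0)\Sigma_\epsilon^{1/2}L^\epsilon_r$. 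Here $|\Gamma^\epsilon_r|^2\le\big(\int_{E^\epsilon}\gamma^2\nu(de)\big)\int_{E^\epsilon}|U^\epsilon_r(e)|^2\nu(de)\le C\int_{E^\epsilon}|U^\epsilon_r(e)|^2\nu(de)$ by Cauchy--Schwarz, and $|\zeta D_e\gamma(0)\Sigma_\epsilon^{1/2}L^\epsilon_r|^2\le K^2\,Tr(\Sigma_\epsilon)|L^\epsilon_r|^2\le C|L^\epsilon_r|^2$, both uniformly in $\epsilon$; a Young inequality with a small enough weight lets these contributions be absorbed into the energy on the left. Gronwall, followed by BDG for the supremum, gives $\mathbb E[\sup_r|Y^\epsilon_r|^2]\le C(1+|x|^2)$ together with a uniform bound on the total energy $\mathbb E\int_t^T\big(|Z^\epsilon|^2+\zeta|L^\epsilon|^2+\int_{E^\epsilon}|U^\epsilon|^2\nu(de)\big)dr$.

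The genuinely delicate step, and the one I expect to be the main obstacle, is the modulus \eqref{MSC-Y} for $Y^\epsilon$. Conditioning the BSDE over $[r,s']$ gives $Y^\epsilon_r-Y^\epsilon_{s'}$ as a drift term of size $O(|s-s'|)$ plus a martingale increment whose $L^2$-size is $\mathbb E\int_s^{s'}\big(|Z^\epsilon|^2+\zeta|L^\epsilon|^2+\int_{E^\epsilon}|U^\epsilon|^2\nu(de)\big)d\tau$, and mere membership of $(Z^\epsilon,L^\epsilon,U^\epsilon)$ in $\mathbb H^2\times\mathbb H^2\times\mathbb L^2_\nu$ makes this only $o(1)$, not $O(|s-s'|)$. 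To recover the linear rate I would invoke the Markovian representation $Y^\epsilon_s=u^\epsilon(s,X^\epsilon_s)$ together with the Lipschitz-in-$x$ and (uniform in $\epsilon$) $\tfrac12$-H\"{o}lder-in-$t$ regularity of $u^\epsilon$: then $|Y^\epsilon_r-Y^\epsilon_{s'}|\le|u^\epsilon(r,X^\epsilon_r)-u^\epsilon(s',X^\epsilon_r)|+C|X^\epsilon_r-X^\epsilon_{s'}|$ reduces the $Y$-modulus to the time-modulus of $u^\epsilon$ (which contributes $C(1+|X^\epsilon_r|^2)|s-s'|$) plus the already-established $X$-modulus. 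Equivalently, staying purely probabilistic, the linear rate follows once one has the uniform-in-time control $\sup_\tau\mathbb E\big[|Z^\epsilon_\tau|^2+\zeta|L^\epsilon_\tau|^2+\int_{E^\epsilon}|U^\epsilon_\tau|^2\nu(de)\big]\le C(1+|x|^2)$, which is exactly what the representations $Z^\epsilon_s=\sigma^T(X^\epsilon_s)\nabla u^\epsilon(s,X^\epsilon_s)$ and $U^\epsilon_s(e)=u^\epsilon(s,X^\epsilon_{s^-}+\beta(X^\epsilon_{s^-},e))-u^\epsilon(s,X^\epsilon_{s^-})$ provide through the uniform Lipschitz bound on $u^\epsilon$. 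Securing these regularity inputs uniformly in $\epsilon$ (and $\zeta$) is the crux; once they are in hand, the factor $|s-s'|$ comes out exactly as in the forward case.
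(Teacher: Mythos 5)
Your proposal is correct and follows essentially the same route as the paper: standard It\^{o}/Gronwall/BDG arguments for the forward component, with uniformity in $\epsilon$ coming from the L\'evy--It\^{o} isometry and $\int_{E}|e|^2\nu(de)<\infty$, and the key step for the $Y^\epsilon$-modulus being the Markovian representation $Y^\epsilon_s=u^\epsilon(s,X^\epsilon_s)$ combined with the (uniform in $\epsilon$) spatial Lipschitz and temporal $\tfrac12$-H\"older regularity of $u^\epsilon$. The paper likewise defers the forward estimates to standard references and obtains the $Y^\epsilon$ bounds from Lemma \ref{u=Y} and the cited regularity of the viscosity solution, so the "crux" you identify is exactly the input the paper also imports rather than proves.
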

	\begin{proof}
	This is standard. A proof can be found in e.g. \cite[Appendix A]{[BE08]}, \be see also \cite{PR14,ApBook09} for similar results. \ee Checking the proof we find that the bound for $X^\epsilon_s$ is independent of $\epsilon$. E.g. by the Levy-It\^{o} isometry for the Poisson-random measure and (H1): \be 
	\begin{align*}
	\mathbb E \Big|\integ{s}{s'}\int_{E^\epsilon}\beta(X_r^-,e)\,\tilde{\mu}(de,dr)\Big|^2 &= \mathbb E \integ{s}{s'}\int_{E^\epsilon}\big|\beta(X_r^-,e)\big|^2\,\nu(de)\,dr\\
	&\leq  |s-s'| C\Big(1+ \max_{s\in[t,T]}\mathbb E|X_s|^2\Big) \int_{E^\epsilon}|e|^2\,\nu(de).
	\end{align*}
%Now we conclude by Theorem \ref{WPeps} and (H1) (ii).
In view of Lemma \ref{u=Y} below, the bound on $Y_t^\epsilon$ then follows by the  regularity of the viscosity solution $u^\epsilon$ of \eqref{PIDE2} \cite{[BBP97],[JKC08]} (H\"older-$\frac12$ in time and Lipschitz in space).
\be Alternatively, the $Y_t^\epsilon$-result follows form Lemma A.2 in \cite{{[BE08]}}, Lemma \ref{u=Y}, and spatial Lipschitz result for $u$ in Lemma \ref{thm:lip}.
% for general 
%Lipschitz 
%results. % for \eqref{PIDE2}.
	\end{proof}
	
The next lemma explains the relation between the FBSDE (\ref{FSDE2}) -(\ref{FBSDE2}) and the PIDE (\ref{PIDE2}).
	\begin{lemma}\label{u=Y} Assume $\epsilon>0$,  $\zeta\in\{0,1\}$, (\textbf{H1}), and (\textbf{H2}).
	\smallskip
	
	\noindent (a)  The function  $u^\epsilon(t, x)=Y_{t}^{\epsilon,t,x}$, $(t, x) \in[0, T] \times \mathbb{R}^{q},$ is a viscosity solution of (\ref{PIDE2}).\smallskip
	
	\noindent	(b) If $u^\epsilon$ is a classic solution of (\ref{PIDE2}), then  $(Y^\epsilon, Z^\epsilon,L^\epsilon,U^\epsilon)$  defined by
		\begin{equation}\label{relation-u-Y-1}
		\left\{
		\begin{array}{l}
		Y^{\epsilon}_s=u^\epsilon(s,X_s^{t,x,\epsilon}),\\ \\
		Z_s^{\epsilon}=\sigma^T(X_s^{t,x,\epsilon})D_x u^\epsilon(s,X^{t,x,\epsilon}_s),\\ \\L_s^{\epsilon}=(\Sigma^{\frac{1}{2}}_\epsilon)^T (D_e\beta(X^\epsilon_{s^-},0))^TD_x u^\epsilon(s,X^{t,x,\epsilon}_s),\\ \\
		U_s^{\epsilon}(e)=u^\epsilon(s,X_{s^-}^{t,x,\epsilon}+\beta(X_{s^-}^{t,x,\epsilon},e))-u^\epsilon(s,X_{s^-}^{t,x,\epsilon}),
		\end{array}
		\right.
		\end{equation}
		is a strong (a.s.) solution of the  BSDE (\ref{FBSDE2}).
	\end{lemma}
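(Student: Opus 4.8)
The plan is to treat part (a) as a nonlinear Feynman--Kac representation and part (b) as an It\^o-formula verification. For (a), note that once the small jumps are truncated to $E^\epsilon$ and replaced by the Brownian motion $\widetilde W$, the system \eqref{FSDE2}--\eqref{FBSDE2} is a \emph{finite-activity} FBSDE driven by the pair $(W,\widetilde W)$ and the compensated measure $\tilde\mu$ restricted to $E^\epsilon$. I would invoke the nonlinear Feynman--Kac theorem of \cite{[BBP97]}: the deterministic map $u^\epsilon(t,x)=Y_t^{\epsilon,t,x}$ is the unique viscosity solution of the PIDE associated to this system. The only thing to verify is that the associated PIDE is exactly \eqref{PIDE2}, which is a coefficient identification. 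The generator of $X^\epsilon$ is $b\cdot D_x+\tfrac12 Tr[(\sigma\sigma^T+\zeta\sigma_\epsilon\sigma_\epsilon^T)D_x^2]+\int_{E^\epsilon}[\,\cdot(x+\beta)-\cdot(x)-D_x\cdot\,\beta\,]\nu(de)$ with $\sigma_\epsilon=D_e\beta(x,0)\Sigma_\epsilon^{1/2}$ from \eqref{def-Sigma}, i.e.\ exactly $\tilde{\mathcal L}^\epsilon$; the BSDE nonlinearity is $f$ with fifth slot $\mathcal B^\epsilon[\cdot]$, the extra first-order term $\zeta\sigma_\epsilon^T D\gamma(0)D_x$ in $\mathcal B^\epsilon$ being produced by the dependence of the driver on the $\widetilde W$-integrand $L^\epsilon$. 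One should check that the BBP-type results accommodate this $L^\epsilon$-dependence, which amounts to a Lipschitz first-order dependence of the effective PDE nonlinearity on $\nabla u^\epsilon$.

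For part (b), assume $u^\epsilon\in C^{1,2}$ solves \eqref{PIDE2} and apply the It\^o formula for jump-diffusions to $s\mapsto u^\epsilon(s,X^{t,x,\epsilon}_s)$ along \eqref{FSDE2}. The continuous martingale part yields $\sigma^T D_x u^\epsilon\,dW$ and $(\sigma_\epsilon)^T D_x u^\epsilon\,d\widetilde W$, identifying $Z^\epsilon$ and $L^\epsilon$ as in \eqref{relation-u-Y-1}; the compensated-jump part yields $\int_{E^\epsilon}[u^\epsilon(s,X_{s^-}+\beta)-u^\epsilon(s,X_{s^-})]\tilde\mu(de,ds)$, identifying $U^\epsilon$. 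The finite-variation part collects to $(\partial_t u^\epsilon+\tilde{\mathcal L}^\epsilon u^\epsilon)(s,X_s)$; here the compensator of the Poisson integral supplies precisely the $-\int_{E^\epsilon}D_x u^\epsilon\,\beta\,\nu(de)$ correction that appears inside $\tilde{\mathcal L}^\epsilon$ in \eqref{PIDE2}, and the $\widetilde W$-quadratic variation supplies the $\tfrac{\zeta}{2}Tr[\sigma_\epsilon\sigma_\epsilon^T D^2 u^\epsilon]$ term. By \eqref{PIDE2} this drift equals $-f(s,X_s,u^\epsilon,\sigma^T\nabla u^\epsilon,\mathcal B^\epsilon[u^\epsilon])$.

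It then remains to match the fifth argument of $f$: with $U^\epsilon$ and $L^\epsilon$ as above, $\mathcal B^\epsilon[u^\epsilon](s,X_s)=\int_{E^\epsilon}U^\epsilon_s(e)\gamma(e)\nu(de)+\zeta\sigma_\epsilon^T D\gamma(0)D_x u^\epsilon=\Gamma^\epsilon_s+\zeta D_e\gamma(0)\Sigma_\epsilon^{1/2}L^\epsilon_s$, which is exactly the driver argument in \eqref{FBSDE2}. Integrating from $s$ to $T$ and inserting the terminal condition $u^\epsilon(T,\cdot)=g$ produces \eqref{FBSDE2}. For the required integrability, $(Y^\epsilon,Z^\epsilon,U^\epsilon,L^\epsilon)$ lie in $\mathbb S^2\times\mathbb H^2\times\mathbb L^2_{\nu}\times\mathbb H^2$ because the Lipschitz regularity of $u^\epsilon$ (cf.\ Theorem \ref{thm:lip} and \cite{[BBP97],[JKC08]}) gives $\|Du^\epsilon\|_\infty<\infty$, while Lemma \ref{WPeps} and \eqref{moment-Y} control the moments of $X^\epsilon$; the $\mathbb L^2_\nu$-bound for $U^\epsilon$ uses $|U^\epsilon_s(e)|\le\|Du^\epsilon\|_\infty|\beta(X_{s^-},e)|\le C(1+|X_{s^-}|)(1\wedge|e|)$ together with \eqref{levy}.

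The main obstacle is not the It\^o computation itself but the exact algebraic matching in both parts: confirming, with the correct transposes, that the $\widetilde W$-augmentation reproduces both the $\zeta\sigma_\epsilon\sigma_\epsilon^T$ diffusion and the $\zeta\sigma_\epsilon^T D\gamma(0)$ correction inside $\mathcal B^\epsilon$ from the $\Sigma_\epsilon^{1/2}$-scaled coefficients, and that the jump compensator delivers precisely the $-D_x u^\epsilon\,\beta$ term of $\tilde{\mathcal L}^\epsilon$. In (a) one also relies on the dynamic-programming/flow property $Y^{\epsilon,t,x}_s=u^\epsilon(s,X^{\epsilon,t,x}_s)$ underlying the Feynman--Kac identity; this is the substantive ingredient imported from \cite{[BBP97]}, and it is what makes (a) genuinely deeper than the straightforward verification in (b).
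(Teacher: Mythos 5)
Your proposal is correct and follows exactly the route the paper takes: part (a) is delegated to the nonlinear Feynman--Kac result of \cite{[BBP97]} (the paper cites the proof of Theorem 3.4 there), and part (b) is a direct application of It\^{o}'s formula to $u^\epsilon(s,X_s^{\epsilon})$, with the coefficient identification you describe. You have simply spelled out the details (generator matching, identification of the driver's fifth argument, integrability of $(Y^\epsilon,Z^\epsilon,L^\epsilon,U^\epsilon)$) that the paper leaves to the cited reference.
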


\be
\begin{remark}\label{U-u nonsmooth}
The relations for $Y^\epsilon$ and $U^\epsilon$ in \eqref{relation-u-Y-1} also holds under the assumptions of part (a) where $u^\epsilon$ is a non-smooth (but $x$-Lipschitz) viscosity solution of \eqref{PIDE2}, see e.g. \cite{[BBP97],Ha16}. 
\end{remark}\ee
 
\begin{proof}
(a) The proof is quite standard and almost identical to the proof of Theorem 3.4 in \cite{[BBP97]}.\smallskip

\noindent (b) Follow from direct application of It\^{o}'s formula to $u^\epsilon(t,X_t)$, see e.g. \cite{[BBP97]}.
\end{proof}
\be Next we show that $(Y^\epsilon, Z^\epsilon,%L^\epsilon,
U^\epsilon)$ converges strongly to $(Y, Z,U)$ and  estimate the corresponding error. \ee Note that the corresponding weak convergence results of Theorem \ref{estim-weak} give better rates.
	\begin{theorem}\label{strong convergence-app}
		Assume $\epsilon>0$, $\zeta\in\{0,1\}$, (\textbf{H1}) and (\textbf{H2}). Then there is $C>0$ independent of $\epsilon,\zeta$ such that
		\begin{equation*}
		\begin{split}
		&\mathbb{E}\Big[\sup_{s\in[t, T]}\big|X_s-X_s^\epsilon\big|^2\Big]+\mathbb{E}\Big[\displaystyle\sup_{s\in[t,T]}\big|Y_s-Y^\epsilon_s\big|^2\Big]+\mathbb{E}\Big[\integ{t}{T}\big|Z_s-Z_s^\epsilon\big|^2\,ds\Big]\\
		&\qquad+\mathbb{E}\Big|\integ{0}{T}\zeta L^\epsilon_s\,d\widetilde{W}_s-\integ{t}{T}\int_{|e|\leq\epsilon}U_s(e)\,\tilde{\mu}(de,ds)\Big|^2+\mathbb{E}\Big[\integ{t}{T}\int_{E^\epsilon}\big|U_s(e)-U^\epsilon_s(e)\big|^2\,\nu(de)ds\Big]\\[0.2cm]
&\leq C(1+|x|^2)\sigma_\epsilon^2 \qquad \text{where} \qquad \sigma_\epsilon^2=\int_{|e|\leq \epsilon}|e|^2\,\nu(de).
		\end{split}
		\end{equation*}
	\end{theorem}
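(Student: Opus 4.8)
The plan is to estimate the differences $X_s-X_s^\epsilon$, $Y_s-Y_s^\epsilon$, and so on by writing down the SDEs/BSDEs they satisfy and applying standard $L^2$-stability estimates, being careful to track every term through the It\^o isometries so that the constant stays independent of $\epsilon$ and $\zeta$. First I would subtract \eqref{FSDE} from \eqref{FSDE2} to obtain the equation for $\delta X_s := X_s - X_s^\epsilon$. The forward difference is driven by three sources: the Lipschitz-in-$x$ discrepancies in $b$, $\sigma$, and $\beta$ (which are handled by Gronwall and contribute only lower-order terms coupled back to $\mathbb{E}[\sup|\delta X|^2]$), and the genuinely new contribution from replacing the small-jump integral $\int_{|e|\le\epsilon}\beta(X_{r^-},e)\,\tilde\mu(de,dr)$ by the compensating Gaussian term $\zeta\int D_e\beta(X^\epsilon_{r^-},0)\Sigma_\epsilon^{1/2}\,d\widetilde W_r$. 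Using the Taylor expansion $\beta(x,e)=D_e\beta(x,0)e+O(|e|^2)$ from \eqref{Con-Dbeta}, the definition \eqref{def-Sigma} of $\Sigma_\epsilon$, and the Levy-It\^o isometry exactly as displayed in the proof of Lemma \ref{WPeps}, the mean-square size of this new term is controlled by $C(1+|x|^2)\int_{|e|\le\epsilon}|e|^2\nu(de) = C(1+|x|^2)\sigma_\epsilon^2$. A Gronwall argument then yields the forward estimate $\mathbb{E}[\sup_{s}|X_s-X_s^\epsilon|^2]\le C(1+|x|^2)\sigma_\epsilon^2$.

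Next I would treat the backward difference $(\delta Y,\delta Z,\delta U,\delta L):=(Y-Y^\epsilon,Z-Z^\epsilon,U-U^\epsilon, L^\epsilon)$ by subtracting \eqref{BSDE} from \eqref{FBSDE2}. The standard approach is to apply It\^o's formula to $|\delta Y_s|^2$ on $[t,T]$, take expectations (the martingale parts vanish), and use the Lipschitz assumption (\textbf{H2}) on $f$ together with Young's inequality to absorb the $Z$, $\Gamma$, and $L$ contributions. This produces the usual energy identity in which $\mathbb{E}[\sup_s|\delta Y_s|^2]$, $\mathbb{E}\int_t^T|\delta Z_s|^2ds$, and the appropriate norm of the jump/Gaussian discrepancy all appear with positive coefficients on the left, driven on the right by the terminal term $\mathbb{E}|g(X_T)-g(X_T^\epsilon)|^2\le C_g^2\,\mathbb{E}|\delta X_T|^2$ (already bounded by $\sigma_\epsilon^2$) plus the error incurred by the driver $f$ seeing the extra argument $\zeta D_e\gamma(0)\Sigma_\epsilon^{1/2}L_s^\epsilon$ and by the small-jump integral $\int_{|e|\le\epsilon}U_s(e)\tilde\mu$. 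The key point for the mixed term $\mathbb{E}|\int\zeta L_s^\epsilon d\widetilde W_s - \int\int_{|e|\le\epsilon}U_s(e)\tilde\mu(de,ds)|^2$ in the statement is that both pieces are small-jump objects of size $\sigma_\epsilon$: using Remark \ref{U-u nonsmooth} to identify $U_s(e)=u(s,X_{s^-}+\beta)-u(s,X_{s^-})\approx \nabla u\cdot\beta\approx\nabla u\cdot D_e\beta(x,0)e$ and $L_s^\epsilon=(\Sigma_\epsilon^{1/2})^T(D_e\beta)^T D_xu$, one sees the two terms are designed to cancel to leading order, with remainder again controlled by $\sigma_\epsilon^2$.

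The main obstacle will be obtaining the mixed Gaussian/jump estimate with a constant uniform in $\epsilon$, since neither $\int\zeta L^\epsilon d\widetilde W$ nor $\int\int_{|e|\le\epsilon}U\,\tilde\mu$ is individually $O(\sigma_\epsilon^2)$ in a way that survives the singular limit $\nu(\{|e|\le\epsilon\})\to\infty$; only their \emph{difference} is small, and capturing this requires the second-order Taylor remainder of $\beta$ and the precise matching in \eqref{def-Sigma} rather than a crude triangle inequality. This is precisely where the probabilistic It\^o-calculus argument of \cite{[A13]} has the gap the authors flag in the introduction, and where Appendix \ref{app:A} is invoked; I would therefore expand $\beta(x,e)-D_e\beta(x,0)e$ and the corresponding increment of $u$ to second order, bound the remainders using (\textbf{H1}), (\textbf{H2}), (\textbf{H3}), and the moment bound \eqref{moment-Y}, and only then assemble the energy estimate. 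The remaining terms ($\delta X$, $\delta Y$, $\delta Z$, $\delta U$ on $E^\epsilon$) follow by routine Gronwall and Lipschitz bounds once this mixed term is under control, giving the stated total bound $C(1+|x|^2)\sigma_\epsilon^2$.
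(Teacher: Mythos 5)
Your overall architecture (Gronwall for the forward difference; It\^o's formula applied to $|\delta Y_s|^2$ together with the Lipschitz assumptions and Young's inequality for the backward part) matches the paper's proof in Appendix \ref{app:A}. But your treatment of the key mixed term rests on a misconception that would derail the argument. You claim that neither $\zeta\int L^\epsilon_s\,d\widetilde W_s$ nor $\int\int_{|e|\le\epsilon}U_s(e)\,\tilde\mu(de,ds)$ is individually $O(\sigma_\epsilon^2)$ in mean square, so that one must exhibit a cancellation between them via a second-order Taylor expansion of $\beta$ and of the increment of $u$. For the rate asserted in the theorem this is false. By the L\'evy--It\^o isometry, the representation $U_s(e)=u(s,X_{s^-}+\beta(X_{s^-},e))-u(s,X_{s^-})$ (valid for the Lipschitz viscosity solution by Remark \ref{U-u nonsmooth}), the spatial Lipschitz bound of Theorem \ref{thm:lip}, and $|\beta(x,e)|\le K(1+|x|)(1\wedge|e|)$ from \eqref{Con-Dbeta}, one gets directly
$\mathbb{E}\int_t^T\int_{|e|\le\epsilon}|U_s(e)|^2\,\nu(de)\,ds\le C(1+|x|^2)\int_{|e|\le\epsilon}|e|^2\,\nu(de)=C(1+|x|^2)\sigma_\epsilon^2$;
and $\zeta\,\mathbb{E}\int_t^T|L^\epsilon_s|^2\,ds$ sits on the \emph{left-hand side} of the energy identity produced by It\^o's formula, so it is controlled by $C(1+|x|^2)\sigma_\epsilon^2$ once that identity is closed (equivalently, $|L^\epsilon_s|^2\le C\,\mathrm{Tr}(\Sigma_\epsilon)=C\sigma_\epsilon^2$ pointwise via \eqref{relation-u-Y-1} and \eqref{def-Sigma}). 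The paper therefore bounds the mixed term by precisely the ``crude triangle inequality'' you reject. The cancellation you describe is what yields the \emph{better} rates of the weak estimate in Theorem \ref{estim-weak}; it is not needed here, and pursuing it would require $C^2$-type regularity of $u$ that is not available under (\textbf{H1})--(\textbf{H2}) alone (you also invoke (\textbf{H3}), which is not a hypothesis of this theorem).

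A related, smaller point: in the forward estimate no Taylor expansion of $\beta$ is needed either. The paper bounds the truncated small-jump integral and the Gaussian compensator separately, each by $C(1+|x|^2)\sigma_\epsilon^2$, using only \eqref{Con-Dbeta} and \eqref{def-Sigma}. With these corrections your plan collapses to the paper's argument; as written, the step you single out as the crux would not go through.
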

\begin{proof}
A proof is given in \cite{[A13]}, but there are some problems with this proof since there $U$ does not dependent on $e$. A
correct proof is given in Appendix \ref{app:A}.\end{proof}

	\section{Deep FBSDE approximation of PIDE (\ref{PIDE2}).}\label{sec:scheme}
	\subsection{Discretization of the FBSDE}
	We define a partition/grid $\mathcal G_{\Delta t}:=\{t_i: i=0,1,\dots, N\}$ on $[0,T]$ such that $ t_0=0<t_1<\dots< t_N=T$ and  denote the time steps by
	$$\Delta t_i:=t_{i+1}-t_i\quad\text{and}\quad\Delta t:=\max_{1\leq i\leq N-1}\Delta t_i.$$
	A jump-corrected Euler type discretization $X^{\epsilon,\Delta t}
	$ of the forward SDE (\ref{FSDE2}) reads
	\begin{equation}\label{DisSDE}
	\begin{array}{ll}
	X^{\epsilon,\Delta t}_0=x,\quad X_{t_{i+1}}^{\epsilon,\Delta t}=X_{t_i}^{\epsilon,\Delta t}+b(X_{t_i}^{\epsilon,\Delta t})\Delta t_i+\sigma(X_{t_i}^{\epsilon,\Delta t})\Delta W_{t_i}+\zeta D_e\beta(X_{t_i}^{\epsilon,\Delta t},0)\Sigma_\epsilon^{\frac{1}{2}}\Delta \widetilde{W}_{t_i}\\ \\ \hspace{3.7cm}+\displaystyle\ \int_{E^\epsilon}\beta(X^{\epsilon,\Delta t}_{t_i},e)\tilde{\mu}(de,(t_i,t_{i+1}]),
	\end{array}
	\end{equation}
	where $\Delta W_{t_i}:=W_{t_{i+1}}-W_{t_i}$ and $\Delta \widetilde{W}_{t_i}:=\widetilde{W}_{t_{i+1}}-\widetilde{W}_{t_i}$. By \cite{[BE08],[A13]} this approximation is convergent:
	\begin{theorem}
		Assume $\epsilon,\Delta t>0$, $\zeta\in\{0,1\}$, and (\textbf{H1}). Then there is $C>0$, independent \footnote{The constant $C$ is independent also of $\epsilon$ by Remark 4.1 in \cite{[A13]}.} of $\Delta t,\epsilon,\zeta$, such that
for $ i=0,\dots,N-1$, 		\begin{equation}\label{estiX}
	\mathbb{E}\bigg[\sup_{t\in[t_i,t_{i+1}]}\big|X^\epsilon_t-X_{t_i}^{\epsilon,\Delta t}\big|^2\bigg]
 %+{\bs \mathbb{E}\bigg[\sup_{t\in[t_i,t_{i+1}]}\big|Y^\epsilon_t-Y_{t_i}^{\epsilon}\big|^2\bigg] \es}
 \leq C
		(1+|x|^2)
		\Delta t.\end{equation}
%		\color{red} What is the dependence of $C$ on $\epsilon$ ??
	\end{theorem}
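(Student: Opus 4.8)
The plan is to prove the estimate by splitting the error at $t\in[t_i,t_{i+1}]$ into a \emph{local oscillation} of the continuous process and a \emph{global grid error} at the left endpoint $t_i$. By the triangle inequality,
\begin{equation*}
\mathbb{E}\Big[\sup_{t\in[t_i,t_{i+1}]}\big|X^\epsilon_t-X^{\epsilon,\Delta t}_{t_i}\big|^2\Big]\leq 2\,\mathbb{E}\Big[\sup_{t\in[t_i,t_{i+1}]}\big|X^\epsilon_t-X^\epsilon_{t_i}\big|^2\Big]+2\,\mathbb{E}\big|X^\epsilon_{t_i}-X^{\epsilon,\Delta t}_{t_i}\big|^2.
\end{equation*}
The first term is the mean-square modulus of continuity of the solution $X^\epsilon$ of \eqref{FSDE2} over a single grid interval, which is bounded by $C(1+|x|^2)\Delta t_i\leq C(1+|x|^2)\Delta t$ by the left-endpoint version of \eqref{MSC-Y} in Lemma \ref{WPeps}; this version follows from exactly the same It\^o-isometry and BDG computation, applied to the increment $X^\epsilon_t-X^\epsilon_{t_i}$. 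It then remains to control the second, grid-point, term uniformly over $i$.

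For the grid error I would introduce the piecewise-frozen continuous interpolation $\bar X_s$ of \eqref{DisSDE}, obtained by replacing each integrand argument in \eqref{FSDE2} by its value at the last grid point $\eta(s):=t_i$ for $s\in[t_i,t_{i+1})$, so that $\bar X_{t_i}=X^{\epsilon,\Delta t}_{t_i}$ for every $i$. Writing $D_s:=X^\epsilon_s-\bar X_s$ and subtracting the two integral equations, the drift, the two Gaussian integrals and the $\tilde{\mu}$-integral all have integrands that are differences of the respective coefficient evaluated at $X^\epsilon_{r^-}$ and at the frozen value $\bar X_{\eta(r)}$. Applying the Burkholder--Davis--Gundy inequality to the $W$- and $\widetilde W$-parts, the L\'evy--It\^o isometry together with BDG to the compensated Poisson part, and Cauchy--Schwarz to the drift, and setting $\phi(s):=\mathbb{E}[\sup_{t\leq u\leq s}|D_u|^2]$, I obtain
\begin{equation*}
\phi(s)\leq C\int_t^s\mathbb{E}\big|X^\epsilon_r-\bar X_{\eta(r)}\big|^2\,dr\leq C\int_t^s\Big(\mathbb{E}\big|X^\epsilon_r-X^\epsilon_{\eta(r)}\big|^2+\phi(r)\Big)\,dr.
\end{equation*}
The first summand is again bounded by $C(1+|x|^2)\Delta t$ via Lemma \ref{WPeps}, so Gronwall's lemma yields $\phi(T)\leq C(1+|x|^2)\Delta t$, and in particular $\mathbb{E}|X^\epsilon_{t_i}-X^{\epsilon,\Delta t}_{t_i}|^2\leq\phi(t_i)\leq C(1+|x|^2)\Delta t$. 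Combining the two terms gives the claim.

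The crux, and the only point where the nonlocal/infinite-activity setting requires care, is that the constant $C$ must be independent of $\epsilon$ (and of $\zeta$). This is where the $L^2$-integrability \eqref{levy} is essential: every $\epsilon$-dependent Lipschitz and growth constant entering the BDG and isometry estimates is controlled by $\int_E|e|^2\,\nu(de)<\infty$ uniformly in $\epsilon$. Concretely, the truncated jump coefficient satisfies $\int_{E^\epsilon}|\beta(x,e)-\beta(x',e)|^2\,\nu(de)\leq K^2|x-x'|^2\int_E|e|^2\,\nu(de)$, and the added Gaussian coefficient obeys $|D_e\beta(x,0)\Sigma^{1/2}_\epsilon-D_e\beta(x',0)\Sigma^{1/2}_\epsilon|^2\leq K^2|x-x'|^2|\Sigma_\epsilon|$ with $|\Sigma_\epsilon|\leq\int_{|e|\leq\epsilon}|e|^2\,\nu(de)\leq\int_E|e|^2\,\nu(de)$, using \eqref{def-Sigma} and the bound $|D_e\beta(\cdot,0)|\leq K$ of (\textbf{H1}); the same integral, combined with the moment bound \eqref{moment-Y}, supplies the $(1+|x|^2)$ factor in the linear-growth estimate of $\phi$. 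Since none of these bounds degenerates as $\epsilon\to0$, Gronwall produces a constant independent of $\epsilon$ and $\zeta$, as recorded in the footnote. I expect this uniform-in-$\epsilon$ bookkeeping --- rather than any single estimate --- to be the main obstacle, since the limit $\epsilon\to0$ is a singular perturbation and a naive bound involving $\int_{E^\epsilon}\nu(de)$ would blow up.
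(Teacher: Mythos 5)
Your argument is correct, and it is worth noting that the paper itself does not prove this theorem at all: it is stated as a citation to \cite{[BE08],[A13]}, with only a footnote addressing the $\epsilon$-uniformity of the constant. Your proof is the standard one for Euler schemes for jump SDEs (and essentially the one in those references): split into the local oscillation $X^\epsilon_t-X^\epsilon_{t_i}$, controlled by the (left-endpoint version of the) mean-square continuity \eqref{MSC-Y}, plus the grid error, controlled by Gronwall applied to the continuously interpolated scheme. The two points that need care are both handled: the Lipschitz bound $|D_e\beta(x,0)-D_e\beta(x',0)|\le K|x-x'|$ for the compensating Gaussian coefficient does follow from (\textbf{H1}), since $\beta(\cdot,0)\equiv 0$ and $|\beta(x,e)-\beta(x',e)|\le K|x-x'||e|$ imply the derivative bound at $e=0$; and every $\epsilon$- and $\zeta$-dependent constant in the It\^{o}/L\'evy--It\^{o} isometries is dominated by $\int_E|e|^2\,\nu(de)<\infty$ from \eqref{levy} together with $\mathrm{Tr}\,\Sigma_\epsilon=\int_{|e|\le\epsilon}|e|^2\,\nu(de)$, which is exactly the uniformity claimed in the footnote. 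Your diagnosis of where a naive bound would fail (anything involving $\nu(E^\epsilon)$) is also the right one. In short: the proposal supplies a complete, correct proof of a statement the paper only cites.
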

Next we want to approximate the backward SDE \eqref{FBSDE2}. 
	As opposed to the local case we now need also approximations of $U^\epsilon$ which has an extra variable $e$ compared to $Y^\epsilon,Z^\epsilon,L^\epsilon$. To determine $e$-dependent approximations of $U^\epsilon$ we subdivide the domain $E_\epsilon$ and discretise the stochastic and deterministic $e$-integrals. 
	For $h>0$, let $\mathcal{G}_h:=\{K_j: j=1, 2,\dots\}$ be a partition of $E^\epsilon$ satisfying the following disjoint covering, refinement, and convergence conditions:
	\begin{enumerate}[(D1)]
		\item $K_{i}\cap K_{j}\underset{i\ne j}{=}\emptyset\,,\, \displaystyle\bigcup_{j\in\mathbb{N}}K_j=E^\epsilon$, and for all $R>0$,\, $\displaystyle\sup_{j}\text{diam}(B_R\cap K_j)\to 0 \;\text{as}\; h\to 0^+$.
		\item For $\phi\in \mathbb{L}^2_{\nu,[0,T],E}$ and $j\in\mathbb N$ with $\nu(K_j)\ne 0$, define $$\phi^{j}:=\displaystyle\frac{1}{\nu(K_j)}\mathbb{E}\bigg[\int_{K_j}\phi_s(e)\nu(de)\bigg]\qquad\text{and}\qquad \displaystyle\gamma_j:=\frac{1}{\nu(K_j)}\int_{K_j}\gamma(e)\nu(de).$$ Then we have,
		\begin{eqnarray}
		&&\mathbb{E}\bigg[\displaystyle\sum_{j\geq 1}\int_{K_j}|\phi(e)-\phi^{j}|^2\nu(de)\bigg]\to 0\quad\text{as}\quad h\to0^+,\nonumber
		\\&& \displaystyle\sum_{j\geq 1}\int_{K_j}|\gamma(e)-\gamma_j|^2\nu(de)\to 0\quad\text{as}\quad h\to0^+.\nonumber
		\end{eqnarray}
	\end{enumerate}
For the backward SDE \eqref{FBSDE2}, we then consider the following Euler in time approximation where the two $e$-integrals have been approximated by quadratures, 
\begin{equation}\label{DisBSDE}
\begin{array}{ll}
Y^\epsilon_{t_i}\sim  g(X_{t_N}^{\epsilon,\Delta t})+\displaystyle\sum_{l=i}^{N-1}f\bigg(t_l,X^{\epsilon,\Delta t}_{t_l},Y^\epsilon_{t_l},Z^\epsilon_{t_l},\sum_{j\geq 1}U^\epsilon_{t_l}(e_j)\gamma_j\nu(K_j)+\zeta D_e\gamma(0)\Sigma^\frac{1}{2}_\epsilon L^\epsilon_{t_l} \bigg)\Delta t_{l}\\\hspace{3.5cm} -\displaystyle\sum_{l=i}^{N-1}Z^\epsilon_{t_l}\Delta W_{t_l}-\zeta\sum_{l=i}^{N-1}L^\epsilon_{t_l}\Delta \widetilde{W}_{t_l}-\sum_{l=i}^{N-1}\sum_{j\geq 1}U^\epsilon_{t_l}(e_j)\tilde{\mu}(K_j,(t_l,t_{l+1}]),
\end{array}
\end{equation}
where $e_j\in K_j.$ Note that no processes that satisfies (\ref{DisBSDE}) as an equality can be adapted. This problem can be fixed by taking conditional expectation as in \cite{[BE08]}, but this is too costly in high dimensions. This problem can be overcome here using deep-neural net approximations.

\begin{remark}
The quadrature methods above are stochastic integrals of elementary/piecewise constant in $(t,e)$ integrands, and all increments are independent: $\Delta W_{t_n},\Delta \widetilde W_{t_k}, \tilde{\mu}(K_j,(t_l,t_{l+1}])$ for all $n,k,j,l$. Independence of increments follows by definition of the Levy process, e.g. the compensated random measure $\tilde \mu$ is independently scattered -- evaluations on disjoint set $K_j$'s give independent random variables. The total approximation is similar to the approximate integrals used in the construction of the It\^{o} integral with respect to a Levy proecess. We refer to \cite{ApBook09} for more details.
\end{remark}	
	
	\subsection{Neural network architecture}\label{sec:DNN}
	Let $x=\left(x_{1}, \ldots, x_{q}\right)^{\top} \in \mathbb{R}^{q} .$ The neural network is defined as follow
	$$
	\mathbb{N N}(x, \theta)=A_{L} \circ \rho \circ A_{L-1} \circ \rho \ldots \circ A_{1}\left(\left(x_{1}, \ldots, x_{q}\right)^{\top}\right)
	$$
	where $A_{l}(y)=P_{l} y+\beta_{l}$ for $l=1, \ldots, L$, $P_{1} \in \mathbb{R}^{q \times m}$, $P_{l} \in \mathbb{R}^{m \times m}$ for $l=2, \ldots, L-1$,
	$P_{L} \in \mathbb{R}^{m \times 1}$, $\beta_{l} \in \mathbb{R}^{m}$ for $l=1, \ldots, L-1$, and $\beta_{L} \in \mathbb{R}$. $L$ is the number of layers and $m$ is the number of neurons per layer (that we assume to be the same for every layer). The $P_{l}$ correspond to the weights and $\beta_{l}$ to the bias. The activation function $\rho$ is  chosen as the ReLu function,  $\rho(x)=\max (0, x).$ The collection of weights and biases are the parameters of the neural network, $\theta:=\left(P_{1}, \ldots, P_{L}, \beta_{1}, \ldots, \beta_{L}\right).$
	
%	The universal approximation theorem of \cite{[HSW90]} states 
Deep neural networks are very efficient for approximations of functions even in high-dimensional
spaces. The following universal approximation theorem is from \cite{[HSW90]}:	
\begin{theorem}\label{UAthm}
  The set of all
	neural network functions is dense in $L^2(\mu)$ for any finite measure $\mu$ on $\mathbb{R}^d
	, d > 0$ whenever
	the activation function  $\rho$ is continuous and non-constant.
\end{theorem}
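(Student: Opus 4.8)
The plan is to work directly in the Hilbert space $L^2(\mu)$ and exploit the orthogonality characterisation from Theorem \ref{teoremprojection}: a linear subspace is dense if and only if its orthogonal complement is trivial. Let $V\subset L^2(\mu)$ be the closure of the linear span of all neural network functions $\mathbb{N N}(\cdot,\theta)$. Since $\mu$ is finite, it suffices to show that whenever $\phi\in L^2(\mu)$ satisfies $\int_{\mathbb R^d}N(x)\phi(x)\,d\mu(x)=0$ for every network function $N$, then $\phi=0$ $\mu$-a.e.; this forces $V^\perp=\{0\}$ and hence $V=L^2(\mu)$. As a preliminary one must check that the network functions actually lie in $L^2(\mu)$, which is automatic when $\rho$ is bounded (the classical setting) and for unbounded $\rho$ such as the ReLU requires either that $\mu$ have finite second moments or that $\rho$ be replaced by its bounded truncations in an initial approximation.

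First I would reduce the orthogonality hypothesis to a statement about single ridge functions. Taking $L=2$ with a single active neuron shows that every map $x\mapsto \rho(w\cdot x+b)$, $w\in\mathbb R^d$, $b\in\mathbb R$, lies in the network class, so the hypothesis gives $\int_{\mathbb R^d}\rho(w\cdot x+b)\,\phi(x)\,d\mu(x)=0$ for all $w,b$. Because $\mu$ is finite and $\phi\in L^2(\mu)\subset L^1(\mu)$, the formula $d\lambda:=\phi\,d\mu$ defines a finite signed Borel measure on $\mathbb R^d$, and the hypothesis becomes $\int \rho(w\cdot x+b)\,d\lambda(x)=0$ for all $w,b$.

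The core of the argument --- and the step I expect to be the main obstacle --- is to show that a continuous non-constant $\rho$ is \emph{discriminatory}: the only finite signed measure $\lambda$ annihilating all ridge functions $\rho(w\cdot x+b)$ is $\lambda=0$. Here I would flag a genuine caveat: if $\rho$ were a polynomial, the ridge functions would span only polynomials of bounded degree and the claim would fail, so non-polynomiality (which holds for the ReLU used here, and more generally in the multilayer setting of \cite{[HSW90]}) is what must be used. The cleanest route I would take is the reduction to one variable following Leshno and Pinkus: mollifying $\rho$ against smooth bumps keeps it in the annihilated class while making it $C^\infty$, and formally differentiating $\rho(wx+b)$ in the scalar weight $w$ produces, in the closed span, the monomials $x\mapsto x^k$ as soon as some derivative $\rho^{(k)}$ is nonzero at a point; by the Weierstrass theorem this yields every continuous function on compacts, so the one-dimensional pushforwards of $\lambda$ along all directions vanish, forcing $\lambda=0$. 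An equivalent Fourier-analytic variant is to build from $\rho$ approximations of the exponentials $x\mapsto e^{i\xi\cdot x}$ and conclude that the Fourier--Stieltjes transform $\widehat\lambda$ vanishes identically, whence $\lambda=0$ by uniqueness.

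Once discriminacy is established the proof closes immediately: $\lambda=\phi\,\mu=0$ gives $\phi=0$ $\mu$-a.e., so $V^\perp=\{0\}$ and the network functions are dense in $L^2(\mu)$. Beyond the discriminatory step the remaining points are purely measure-theoretic --- using finiteness of $\mu$ to pass between $L^2(\mu)$ and $L^1(\mu)$, justifying the mollification and differentiation under the integral, and controlling tails so that uniform-on-compacts approximations integrate correctly against the finite measure $\lambda$ --- and these are routine dominated-convergence arguments.
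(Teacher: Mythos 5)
The paper does not actually prove Theorem \ref{UAthm}: it is imported verbatim from \cite{[HSW90]}, so there is no internal proof to compare yours against. Your sketch is the standard duality argument from that literature (Cybenko, Hornik, Leshno--Pinkus): identify the orthogonal complement of the closed span via Theorem \ref{teoremprojection}, reduce to ridge functions $\rho(w\cdot x+b)$, and show that a continuous non-polynomial activation is discriminatory. That is the right skeleton, and you are also right to flag the two places where the theorem as literally stated is too generous: a non-constant polynomial activation is \emph{not} discriminatory for a fixed-depth architecture, and for an unbounded activation such as the ReLU fixed in Section \ref{sec:DNN} the network functions need not even lie in $L^2(\mu)$ for an arbitrary finite $\mu$ (one needs second moments, as is the case for the laws of $X^{\epsilon,\Delta t}_{t_i}$ to which the theorem is actually applied).

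Two steps in your discriminatory argument are thinner than ``routine''. First, producing the monomials $x\mapsto (w\cdot x)^k$ in the closed span only tells you that every pushforward $\lambda_w=(x\mapsto w\cdot x)_*\lambda$ annihilates all polynomials; for a signed measure that is not compactly supported this does not force $\lambda_w=0$ (the moment problem can be indeterminate), and the polynomials need not even be $\lambda_w$-integrable. The clean route is to first prove density in $C(K)$ uniformly on compacts and only then return to $L^2(\mu)$ by truncation. Second, that truncation step itself needs care with ReLU: a network chosen to approximate $f$ uniformly on a compact $K$ has linear growth off $K$, so its $L^2(\mu)$-tail is not automatically small; one either assumes a bounded activation (as Hornik's $L^p(\mu)$ theorem does) or uses depth to clip the output, e.g. via $\rho(y)-\rho(y-M)$. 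Neither point is fatal --- both are handled in the cited references --- but they are exactly where a self-contained proof would have to do real work, and they explain why the hypotheses of the theorem as quoted should really read ``continuous, non-polynomial, and bounded (or $\mu$ with finite second moments)''.
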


	\subsection{Deep learning approximations.}
	 We now solve \eqref{DisBSDE} in a relaxed way using an adapted neural network regression. In a backward in time recursion, we find the best (adapted) approximations by neural network functions $\mathcal{Y}_{i}$, $\mathcal{Z}_{i}$, $\mathcal{W}_{i}$ and $\mathcal{U}_i$, $i =0,\dots,N-1,$ that minimize an $L^2$ residual error related to \eqref{DisBSDE}. 
  
  At time $t=t_i$, let $(\theta_i,\eta_i,\lambda_i,\alpha_i)$ denote the %corresponding 
  neural network parameters,  $(\theta^*_i,\eta^*_i,\lambda_i^*,\alpha^*_i)$ optimal parameters, and optimal neural networks,
	\begin{eqnarray}
	&\hat{\mathcal{Y}}_{i}(x):=\mathcal{Y}_{i} (x,\theta_i^*)&\simeq u^\epsilon (t_i,x),
	\nonumber\\
	&\hat{\mathcal{Z}}_{i}(x):=\mathcal{Z}_{i}(x,\eta_i^*)&\simeq \sigma^T(x)D_x u^\epsilon(t_i,x),
	\nonumber\\
	&\hat{\mathcal{W}}_{i}(x):=\mathcal{W}_{i}(x,\lambda_i^*)&\simeq{(\Sigma^{\frac{1}{2}}_\epsilon})^T D^T_e\beta(x,0)D_x u^\epsilon (t_i,x),
	\nonumber\\
	&\hat{\mathcal{U}}_{i}(x,e):=\mathcal{U}_{i}(x,e,\alpha_i^*) &\simeq u^\epsilon(t_i,x+\beta(x,e))-u^\epsilon(t_i,x),\nonumber
	\end{eqnarray}
	where the approximate equal signs holds by Lemma \ref{u=Y} when $u^\epsilon$ is smooth enough.
%More precisely, g
\be Given \ee $(\theta^*_j,\eta^*_j,\lambda_j^*,\alpha^*_j)$ (and $(\hat{\mathcal{Y}}_{j}$, $\hat{\mathcal{Z}}_{j}$, $\hat{\mathcal{W}}_{j}$, $\hat{\mathcal{U}}_j)$) for $j=i+1,\dots N$,  we determine $(\theta^*_i,\eta^*_i,\lambda_i^*,\alpha^*_i)$ (and hence also $\hat{\mathcal{Y}}_{i}$, $\hat{\mathcal{Z}}_{i}$, $\hat{\mathcal{W}}_{i}$, $\hat{\mathcal{U}}_i$) by minimizing \be an \ee $L^2$ residual error function \be related to \ee (\ref{DisBSDE}):\be 
	\begin{align}\label{Minfun}
&\mathcal{R}_i(\theta,\eta,\lambda,\alpha):=\\[-0.1cm]
&\ \mathbb{E}\,\Big|\,\mathcal{Y}_i(X^{\epsilon,\Delta t}_{t_i},\theta)-g(X^{\epsilon,\Delta t}_{t_N}) -F(t_i,X^{\epsilon,\Delta t}_{t_i},\theta,\eta,\lambda,\alpha) - \sum_{l=i+1}^{N-1}F(t_l,X^{\epsilon,\Delta t}_{t_l},\theta_l^*,\eta_l^*,\lambda_l^*,\alpha_l^*)\Big|^2,\nonumber
%\ + \ \sum_{l=i+1}^{N-1}f\Big(t_l,X^{\epsilon,\Delta t}_{t_l},\hat{\mathcal{Y}}_{l}(X^{\epsilon,\Delta t}_{t_l}),\hat{\mathcal{Z}}_{l}(X^{\epsilon,\Delta t}_{t_l}),\nonumber\\
%&\hspace{4.4cm}\sum_{j\geq 1}\hat{\mathcal{U}}_{l}(X^{\epsilon,\Delta t}_{t_l},e_j)\gamma_j\nu(K_j))+\zeta D_e\gamma(0)\Sigma^\frac{1}{2}_\epsilon \hat{\mathcal{W}}_{l}(X^{\epsilon,\Delta t}_{t_l})\Big)\Delta t_{l}\nonumber\\
%&\qquad\qquad - \, \sum_{l=i+1}^{N-1}\hat{\mathcal{Z}}_{l}(X^{\epsilon,\Delta t}_{t_l})\Delta W_{t_l} \ - \ \zeta\sum_{l=i+1}^{N-1}\hat{\mathcal{W}}_{l}(X^{\epsilon,\Delta t}_{t_l})\Delta \widetilde{W}_{t_l}\nonumber\\
%	&\hspace{3cm} -\sum_{l=i+1}^{N-1}\sum_{j\geq 1}\hat{\mathcal{U}}_{l}(X^{\epsilon,\Delta t}_{t_l},e_j)\tilde{\mu}(K_j,(t_l,t_{l+1}]) \ - \ R(t_i,X^{\epsilon,\Delta t}_{t_i},\theta,\eta,\lambda,\alpha)\,\Big|^2,\nonumber
	\end{align}
	where 
	\begin{align*}
 &F(t_i,X^{\epsilon,\Delta t}_{t_i},\theta,\eta,\lambda,\alpha)\\ 
 &\quad:=
%	\mathcal{Y}_i(X^{\epsilon,\Delta t}_{t_i},\theta)-
f\Big(t_i,X^{\epsilon,\Delta t}_{t_i},\mathcal{Y}_i(X^{\epsilon,\Delta t}_{t_i},\theta),\mathcal{Z}_i(X^{\epsilon,\Delta t}_{t_i},\eta),\\
&\qquad\qquad\qquad\qquad\qquad\sum_{j\geq 1}\mathcal{U}_{i}(X^{\epsilon,\Delta t}_{t_i},e_j,\alpha)\gamma_j\nu(K_j)+\zeta D_e\gamma(0)\Sigma^\frac{1}{2}_\epsilon \mathcal{W}_{i}(X^{\epsilon,\Delta t}_{t_i},\lambda)\Big)\Delta t_{i}\\ 
 &\qquad\quad - \ \zeta\mathcal{W}_{i}(X^{\epsilon,\Delta t}_{t_i},\lambda)\Delta \widetilde{W}_{t_i} \ - \ \mathcal{Z}_{i}(X^{\epsilon,\Delta t}_{t_i},\eta)\Delta W_{t_i} \ - \ \sum_{j\geq 1}\mathcal{U}_{i}(X^{\epsilon,\Delta t}_{t_i},e_j,\alpha)\tilde{\mu}(K_j,(t_i,t_{i+1}]).
	\end{align*}\ee

	\begin{algorithm}
		\caption{\ A DeepBSDE method for \eqref{PIDE}.}
		\begin{algorithmic}
			\label{ALGO1}
		\smallskip	\renewcommand{\algorithmicrequire}{\textbf{Input:}}
	\renewcommand{\algorithmicensure}{\textbf{Output:}}
		\REQUIRE
		\STATE \textit{Initialize} $\hat{\mathcal{Y}}_N(.)=g(.)$
		\\
		\FOR {$i = N-1,\dots, 0$}
		\STATE Compute:
		\\[0.2cm] \quad $(\theta^*_i,\eta^*_i,\lambda^*_i,\alpha^*_i)\in \text{argmin}_{(\theta,\eta,\lambda,\alpha)}\mathcal{R}_i(\theta,\eta,\lambda,\alpha).$
		\medskip
	\STATE Update:\\ \medskip \quad $(\hat{\mathcal{Y}},\hat{\mathcal{Z}},\hat{\mathcal{W}},\hat{\mathcal{U}})_i=(\,\mathcal{Y}_i(\cdot,\theta_i^*),\mathcal{Z}_i(\cdot,\eta_i^*), \mathcal{W}_i(\cdot,\lambda_i^*),\mathcal{U}_i(\cdot,\alpha_i^*)\,).$
	\medskip
	\ENDFOR
	\smallskip
	\ENSURE
	\,\\ \medskip
	$(\hat{\mathcal{Y}},\hat{\mathcal{Z}},\hat{\mathcal{W}},\hat{\mathcal{U}})_{i=0}^N\quad\text{approximation of}\quad (Y^\epsilon_{t_i},Z^\epsilon_{t_i},L^\epsilon_{t_i},U^\epsilon_{t_i}\,)_{i=0}^N.$
	%\\$(\,\hat{\mathcal{Y}}_{i})_{i=0}^{N}:\text{Approximation of}\; Y^\epsilon_{t_i}$\;\text{for}\;i=0,\dots, N.\\	$-(\hat{\mathcal{Z}}_{i})_{i=0}^{N}:\text{Approximation of} \,Z^\epsilon_{t_i}\;\text{for}\;i=0,\dots,N.$\\$-(\hat{\mathcal{W}}_{i})_{i=0}^{N}:\text{Approximation of} \,L^\epsilon_{t_i}\;\text{for}\;i=0,\dots,N.$\\	$-(\hat{\mathcal{U}}_{i})_{i=0}^{N}:\text{Approximation of} \;U^\epsilon_{t_i}\;\text{for}\;i=0,\dots,N.$
	\end{algorithmic}
	\end{algorithm}
%\be	Note that we must sample $W$, $\widetilde W$, and $\tilde \mu$ and compute $X^{\epsilon,\Delta t}$ to perform the minimisation.
	\begin{remark}
 (a) \be The approximation $(\,\hat{\mathcal{Y}}_i, \hat{\mathcal{Z}}_i, \hat{\mathcal{W}}_i, \hat{\mathcal{U}}_i\,)(X^{\epsilon,\Delta t}_{t_i})$ of (\ref{DisBSDE}) is $\mathcal F_{t_i}$-measurable and hence adapted. \ee Our procedure therefore resembles taking conditional expectations of (\ref{DisBSDE}), but the (best) approximation is in a much smaller class of functions.
\smallskip

\noindent (b) When $\theta,\eta,\lambda,\alpha=0$, $\mathcal{Y},\mathcal{Z},\mathcal{W},\mathcal{U}=0$ and $\mathcal{R}_i$ is finite, and hence the infimum of $\mathcal{R}_i$ exists. Without loss of generality, we assume that the infimum is achieved and is a minimum. Otherwise, we simply work with $\epsilon$-optimal parameters for $\mathcal{R}_i$, i.e. we use the fact that for any $\epsilon>0$, there is   $(\theta^\epsilon,\eta^\epsilon,\lambda^\epsilon,\alpha^\epsilon)$ satisfying
			$$\mathcal{R}_i(\theta^\epsilon,\eta^\epsilon,\lambda^\epsilon,\alpha^\epsilon)\leq \mathcal{R}_i(\theta,\eta,\lambda,\alpha)+\epsilon. \qquad \text{for all}\; \theta,\eta,\lambda,\alpha.$$
(c) The optimal parameters can be approximated through  stochastic gradient descent-type (SGD) algorithms. See e.g. \cite{BF11,FGJ20} for more details.
	\end{remark}
	\subsection{Intermediate approximation and errors.}\label{sec:interm}
%{\bf CHECK THAT THIS TEXT IS CORRECT (maybe some of this text should go into the introduction):} 
Following \cite{HPW20} (see also \cite{[HL18]}), we will show convergence of Algorithm \ref{ALGO1} adapting some of the machinery of \cite{[BT04]}. To do that we derive an approximation which is intermediate between the FBSDE \eqref{FSDE2}--\eqref{FBSDE2} and our Deep BSDE approximation in Algorithm \ref{ALGO1}. 
%This scheme resembles an extension to our setting 
This scheme shares features with the approximations in \cite{[BE08],[BT04]}, and is a direct extension of the intermediate scheme of \cite{HPW20}. %\cite{HPBL21}. 
Note the modifications due to the compensated Poisson random measure $\tilde\mu$. Compared to \cite{[BE08]}, we consider unbounded intensity/Levy measures (infinite activity of jumps). As opposed to \cite{[BE08]} where $U$ only appears through an integral, here we need a subdivision of $e$-space and pointwise a.e. definition of $U$. \ee We denote by  $\mathbb{E}_i$ the conditional expectation given $\mathcal{F}_{t_i}$.\\[0.2cm]
1. Taking conditional expectations in (\ref{DisBSDE}) we obtain
	\begin{align*}
	Y^\epsilon_{t_i}\sim &\ \mathbb{E}_i\bigg[g(X_{t_N}^{\epsilon,\Delta t})+\displaystyle\sum_{l=i+1}^{N-1}f\bigg(t_l,X^{\epsilon,\Delta t}_{t_l},Y^\epsilon_{t_l},Z^\epsilon_{t_l},\sum_{j\geq 1}U^\epsilon_{t_l}(e_j)\gamma_j\nu(K_j)
	%\\ \qquad\;
	+\zeta D_e\gamma(0)\Sigma^\frac{1}{2}_\epsilon L^\epsilon_{t_l}\bigg)\Delta t_{l}\bigg]
	\\ 
	&\ +f\bigg(t_i,X^{\epsilon,\Delta t}_{t_i},Y^\epsilon_{t_i},Z^\epsilon_{t_i},\displaystyle\sum_{j\geq 1}U^\epsilon_{t_i}(e_j)\gamma_j\nu(K_j)+\zeta D_e\gamma(0)\Sigma^\frac{1}{2}_\epsilon L^\epsilon_{t_i}\bigg)\Delta t_{i},
\end{align*}
	since $\sum Z^\epsilon\Delta W$, $\sum L^\epsilon \Delta\widetilde{W}$, and $\sum\sum U^\epsilon \tilde{\mu}(\dots)$ are mean zero martingales independent of $\mathcal{F}_{t_i}$.\\[0.2cm]
2. First pre-multiplying (\ref{DisBSDE}) by $\Delta W_{t_i}$, $\Delta \widetilde W_{t_i}$, or  $\tilde{\mu}(K_j,[t_i,t_{i+1}))$ for  $j=1,\dots,$ and then taking conditional expectations gives
	\begin{align*}
	0\sim &\ \mathbb{E}_i\bigg[g(X_{t_N}^{\epsilon,\Delta t})\Delta W_{t_i}+\displaystyle\sum_{l=i+1}^{N-1}f\Big(t_l,X^{\epsilon,\Delta t}_{t_l},Y^\epsilon_{t_l},Z^\epsilon_{t_l},\sum_{j\geq 1}U^\epsilon_{t_l}(e_j)\gamma_j\nu(K_j)\\ \nonumber
	&\qquad +\zeta D_e\gamma(0)\Sigma^\frac{1}{2}_\epsilon L^\epsilon_{t_l}\Big)\Delta t_{l}\Delta W_{t_i}\bigg]-Z^\epsilon_{t_i}\Delta t_{i},\\[0.3cm]
	0\sim & \  \mathbb{E}_i\bigg[g(X_{t_N}^{\epsilon,\Delta t})\Delta \widetilde W_{t_i}+\displaystyle\sum_{l=i+1}^{N-1}f\Big(t_l,X^{\epsilon,\Delta t}_{t_l},Y^\epsilon_{t_l},Z^\epsilon_{t_l},\sum_{j\geq 1}U^\epsilon_{t_l}(e_j)\gamma_j\nu(K_j)\\ \nonumber
	& \qquad+\zeta D_e\gamma(0)\Sigma^\frac{1}{2}_\epsilon L^\epsilon_{t_l}\Big)\Delta t_{l}\Delta \widetilde W_{t_i}\bigg]-\zeta L^\epsilon_{t_i}\Delta t_{i},\\[0.3cm]
	0\sim &\ \mathbb{E}_i\bigg[g(X_{t_N}^{\epsilon,\Delta t})\tilde{\mu}(K_j,[t_i,t_{i+1}))+\displaystyle\sum_{l=i+1}^{N-1}f\Big(t_l,X^{\epsilon,\Delta t}_{t_l},Y^\epsilon_{t_l},Z^\epsilon_{t_l},\displaystyle\sum_{j\geq 1}U^\epsilon_{t_l}(e_j)\gamma_j\nu(K_j)\\ \nonumber
	&\qquad+\zeta D_e\gamma(0)\Sigma^\frac{1}{2}_\epsilon L^\epsilon_{t_l}\Big)\Delta t_{l}\tilde{\mu}(K_j,[t_i,t_{i+1}))\bigg]-\Delta t_i\nu(K_j)U^\epsilon_{t_i}(e_j),
\end{align*}
% 3. Pre-multiplying by $\Delta \widetilde W_{t_i}$ and then taking conditional expectation gives
% 	\begin{equation}
% \begin{array}{lll}
% 	0\sim \mathbb{E}_i\bigg[g(X_{t_N}^{\epsilon,\Delta t})\Delta \widetilde W_{t_i}+\displaystyle\sum_{l=i+1}^{N-1}f\bigg(t_l,X^{\epsilon,\Delta t}_{t_l},Y^\epsilon_{t_l},Z^\epsilon_{t_l},\sum_{j\geq 1}U^\epsilon_{t_l}(e_j)\gamma_j\nu(K_j)\\\qquad+\zeta D_e\gamma(0)\Sigma^\frac{1}{2}_\epsilon L^\epsilon_{t_l}\bigg)\Delta t_{l}\Delta \widetilde W_{t_i}\bigg]-\zeta L^\epsilon_{t_i}\Delta t_{i}.
% \end{array}	\end{equation}\\
% 3. For $j=1,\dots,$ pre-multiplying by $\tilde{\mu}(K_j,[t_i,t_{i+1}))$ and then taking conditional expectation gives
% 	\begin{equation}
% \begin{array}{lll}
% 	0\sim \mathbb{E}_i\bigg[g(X_{t_N}^{\epsilon,\Delta t})\tilde{\mu}(K_j,[t_i,t_{i+1}))+\displaystyle\sum_{l=i+1}^{N-1}f\bigg(t_l,X^{\epsilon,\Delta t}_{t_l},Y^\epsilon_{t_l},Z^\epsilon_{t_l},\displaystyle\sum_{j\geq 1}U^\epsilon_{t_l}(e_j)\gamma_j\nu(K_j)\\\qquad+\zeta D_e\gamma(0)\Sigma^\frac{1}{2}_\epsilon L^\epsilon_{t_l}\bigg)\Delta t_{l}\tilde{\mu}(K_j,[t_i,t_{i+1}))\bigg]-\Delta t_i\nu(K_j)U^\epsilon_{t_i}(e_j),
% \end{array}	\end{equation}
	where we used the It\^{o} isometry and the pairwise independence between $\Delta W_i$, $\Delta W_j$, $\Delta \widetilde W_i$, $\Delta \widetilde W_j$, $\tilde{\mu}(K_k,[t_i,t_{i+1}))$, and $\tilde{\mu}(K_l,[t_j,t_{j+1}))$ for all $l\neq k$ and $i\neq j$.\\ 
	
\noindent 3. We introduce an auxiliary process $\mathcal{V}_{t_i}$ (cf. \cite{[HL18],HPW20}%HPBL21}
) to approximate (integrands of) the conditional expectations in step 1 and 2:
	\begin{align}\label{Vi}
\mathcal{V}_{t_i} & :=g(X_{t_N}^{\epsilon,\Delta t})+\displaystyle\sum_{l=i}^{N-1}f\Big(t_l,X^{\epsilon,\Delta t}_{t_l},\hat{\mathcal{Y}}_{t_{l}}(X^{\epsilon,\Delta t}_{t_l}),\hat{\mathcal{Z}}_{t_{l}}(X^{\epsilon,\Delta t}_{t_l}),\\ \nonumber
&\hspace{4cm}\displaystyle\sum_{j\geq 1}\hat{\mathcal{U}}_{t_{l}}(X^{\epsilon,\Delta t}_{t_l},e_j)\gamma_j\nu(K_j)+\zeta D_e\gamma(0)\Sigma^\frac{1}{2}_\epsilon \hat{\mathcal{W}}_{l}(X^{\epsilon,\Delta t}_{t_l})\Big)\Delta t_{l},
\end{align}
for $i=1,\dots,N$. 
% , we then find that:
% 	\begin{align}\label{shema2.1} \tilde{\mathcal{V}}_{t_i}=\mathbb{E}_i\bigg[\tilde{\mathcal{V}}_{t_{i+1}}+f\Big(& t_i,X^{\epsilon,\Delta t}_{t_i},\hat{\mathcal{Y}}_{t_{i}}(X^{\epsilon,\Delta t}_{t_i}),\hat{\mathcal{Z}}_{t_{i}}(X^{\epsilon,\Delta t}_{t_i}),\\ \nonumber &\displaystyle\sum_{j\geq 1}\hat{\mathcal{U}}_{t_{i}}(X^{\epsilon,\Delta t}_{t_i},e_j)\gamma_j\nu(K_j) +\zeta D_e\gamma(0)\Sigma^\frac{1}{2}_\epsilon \hat{\mathcal{W}}_{i}(X^{\epsilon,\Delta t}_{t_i})\Big)\Delta t_{i}\bigg].
%\end{align}
Our intermediate scheme is then defined  as the following backward Euler projection scheme suggested by step 1 and 2:
%related to Algorithm \ref{ALGO1}:
	\begin{equation}\label{NewBack}
	\begin{cases} \hat{\mathcal{V}}_{t_i}=\mathbb{E}_i\big[\mathcal{V}_{t_{i+1}}\big] %\\[0.5cm]\hspace{1cm}
	+f\Big(t_i,X^{\epsilon,\Delta t}_{t_i},\hat{\mathcal{V}}_{t_i},\bar{\hat{Z}}_{t_i},\displaystyle\sum_{j\geq 1}\bar{\hat{U}}_{t_i}(e_j)\gamma_j\nu(K_j)+\zeta  D_e\gamma(0)\Sigma^\frac{1}{2}_\epsilon\bar{\hat{L}}_{t_i}\Big)\Delta t_{i},\\[0.8cm]
	\bar{\hat{Z}}_{t_i}=\displaystyle\frac{1}{\Delta t_i}\mathbb{E}_i\big[\mathcal{V}_{t_{i+1}}\Delta W_{t_i}\big],\\[0.5cm]
	\bar{\hat{L}}_{t_i}=\displaystyle\frac{1}{\Delta t_i}\mathbb{E}_i\big[\mathcal{V}_{t_{i+1}}\Delta \widetilde W_{t_i}\big],\\[0.5cm]
	\bar{\hat{U}}_{t_i}(e_j)=\displaystyle\frac{1}{\Delta t_i\nu(K_j)}\mathbb{E}_i\Big[\mathcal{V}_{t_{i+1}}\tilde{\mu}(K_j,[t_i,t_{i+1}))\Big],
	\end{cases}
	\end{equation}
	for $i=0,\dots,N-1,$ and $j=1,\dots$. This scheme is intermediate between Algorithm \ref{ALGO1} and the time discrete "FBSDEs" \eqref{DisSDE}--\eqref{DisBSDE}.

Next we define
\begin{align}
    \label{shema2.1} %\label{shema 2}
\tilde{\mathcal{V}}_{t_i} &:=\mathbb{E}_i\big[\mathcal{V}_{t_{i}}\big] =\mathbb{E}_i\bigg[\tilde{\mathcal{V}}_{t_{i+1}}+f\Big( t_i,X^{\epsilon,\Delta t}_{t_i},\hat{\mathcal{Y}}_{t_{i}}(X^{\epsilon,\Delta t}_{t_i}),\hat{\mathcal{Z}}_{t_{i}}(X^{\epsilon,\Delta t}_{t_i}),\\ \nonumber &\hspace{3cm}\displaystyle\sum_{j\geq 1}\hat{\mathcal{U}}_{t_{i}}(X^{\epsilon,\Delta t}_{t_i},e_j)\gamma_j\nu(K_j) +\zeta D_e\gamma(0)\Sigma^\frac{1}{2}_\epsilon \hat{\mathcal{W}}_{i}(X^{\epsilon,\Delta t}_{t_i})\Big)\Delta t_{i}\bigg].
\end{align}
 The last equality follows by the tower property of condition expectation and the definition of $\tilde{\mathcal{V}}_{t_{i+1}}$. 
By properties of conditional expectations, we can replace $\mathcal{V}_{t_{i+1}}$ by  $\tilde{\mathcal{V}}_{t_{i+1}}$ in \eqref{NewBack}, e.g. 
%	Observe that $\bar{\hat{Z}}_{t_i}$, $\bar{\hat{L}}_{t_i}$ and $\bar{\hat{U}}_{t_i}(e_j)$ defined in (\ref{NewBack}), satisfy for $i=1,\dots,N-1$ and $j=1,\dots$
	\begin{align}\label{tilda projection}
	&\bar{\hat{Z}}_{t_i}=\frac{\mathbb{E}_i\big[\tilde{\mathcal{V}}_{t_{i+1}}\Delta W_{t_i}\big]}{\Delta t_i},\quad\bar{\hat{L}}_{t_i}=\frac{\mathbb{E}_i\big[\tilde{\mathcal{V}}_{t_{i+1}}\Delta \widetilde W_{t_i}\big]}{\Delta t_i},
	%\\[0.3cm] \nonumber
	%&\text{and}\qquad
	\quad \bar{\hat{U}}_{t_i}(e_j)=\frac{\mathbb{E}_i\big[\tilde{\mathcal{V}}_{t_{i+1}}\tilde{\mu}(K_j,[t_i,t_{i+1}))\big]}{\Delta t_i\nu(K_j)}.
	\end{align}	

\noindent 4. Since $X^{\epsilon,\Delta t}$ is a Markov process,  $\mathbb{E}_i[X^{\epsilon,\Delta t}_{t_j}]= \mathbb{E}[X^{\epsilon,\Delta t}_{t_j}|\sigma(X^{\epsilon,\Delta t}_{t_i})]$, the Doob-Dynkin Lemma (\cite{[K95]}, p.$90$) gives measurable deterministic functions $\hat{v}_i(x), \bar{\hat{z}}_i(x)$, $\bar{\hat{l}}_i(x)$, $\bar{\hat{\rho}}_i(x, e_j)$ such that
	\begin{equation}\label{DDinterm}
	\hat{\mathcal{V}}_{t_i}=\hat{v}_i(X^{\epsilon,\Delta t}_{t_i}),\quad \bar{\hat{Z}}_{t_i}=\bar{\hat{z}}_i(X^{\epsilon,\Delta t}_{t_i}), \quad \bar{\hat{L}}_{t_i}=\bar{\hat{l}}_i(X^{\epsilon,\Delta t}_{t_i}),\quad\text{and}\quad \bar{\hat{U}}_{t_i}(e_j)=\bar{\hat{\rho}}_i(X^{\epsilon,\Delta t}_{t_i}, e_j).
	\end{equation}
	So the random variables $\hat{\mathcal{V}}_{t_i}$, $\bar{\hat{Z}}_{t_i}$, $\bar{\hat{L}}_{t_i}$ and $\bar{\hat{U}}_{t_i}(e_j)$ are functions of $X^{\epsilon,\Delta t}_{t_i}$ for each $i = 0,\dots,N-1.$ \\

\noindent 5. \be By 
%the definition of  \be $\mathcal{V}_{t_{i+1}}$ in 
\eqref{Vi} \ee and the It\^{o}-Levy representation theorem, see e.g.~Theorem 5.3.5 in \cite{ApBook09}, there exists processes $\hat{Z}\in\mathbb{H}^2$, $\hat{L}\in\mathbb{H}^2$,  and $\hat{U}\in\mathbb{L}_\nu^2$ 
	such that %$[\dots]-\mathbb{E}_i[\dots]= 
    $\be {\mathcal{V}}_{t_{i+1}} = \mathbb{E}_i[{\mathcal{V}}_{t_{i+1}}]+\int_{t_i}^{t_N} \hat{Z} dW+\zeta \int_{t_i}^{t_N}\hat{L} d\widetilde W +\int_{t_i}^{t_N}\sum \int_{K_j} \hat U \tilde \mu$.  \be Then using the expressions for ${\mathcal{V}}_{t_{i+1}}$ in \eqref{Vi} and $\mathbb{E}_i[{\mathcal{V}}_{t_{i+1}}]$ given by the first line of
    %by the definition of $\hat{\mathcal{V}}_{t_i}$ in
    \eqref{NewBack}, we see that\ee %more precisely,\ee
\begin{align}
\begin{split}\label{eq-representation}
\be {\mathcal{V}}_{t_{i+1}}&= g(X_{t_N}^{\epsilon,\Delta t})+\displaystyle\sum_{l=i+1}^{N-1}f\Big(t_l,X^{\epsilon,\Delta t}_{t_l},\hat{\mathcal{Y}}_{l}(X^{\epsilon,\Delta t}_{t_l}),\hat{\mathcal{Z}}_{l}(X^{\epsilon,\Delta t}_{t_l}),\\
&\hspace{3.8cm}\displaystyle\sum_{j\geq 1}\hat{\mathcal{U}}_{l}(X^{\epsilon,\Delta t}_{t_l},e_j)\gamma_j\nu(K_j)+\zeta  D_e\gamma(0)\Sigma^\frac{1}{2}_\epsilon \hat{\mathcal{W}}_{l}(X^{\epsilon,\Delta t}_{t_l})\Big)\Delta t_{l}\\ &=\hat{\mathcal{V}}_{t_i}-f\Big(t_i,X^{\epsilon,\Delta t}_{t_i},\hat{\mathcal{V}}_{t_i},\bar{\hat{Z}}_{t_i},\displaystyle\sum_{j\geq 1}\bar{\hat{U}}_{t_i}(e_j)\gamma_j\nu(K_j)+\zeta D_e\gamma(0)\Sigma^\frac{1}{2}_\epsilon\bar{\hat{L}}_{t_i}\Big)\Delta t_i\\
&\quad+\integ{t_i}{t_{N}}\hat{Z}_sdW_s+\zeta\integ{t_i}{t_{N}}\hat{L}_sd \widetilde W_s+\sum_{j\geq 1}\integ{t_i}{t_{N}}\int_{K_j}\hat{U}_s(e)\tilde{\mu}(de,ds).
	\end{split}
\end{align}
	From (\ref{eq-representation}) and by the It\^{o} isometry, we find that $\bar{\hat{Z}}$, $\bar{\hat{L}}$ and $\bar{\hat{U}}$ defined in (\ref{NewBack}) satisfy
	\begin{equation}\label{piZLU}
	\bar{\hat{Z}}_{t_i}=\pi_ {[t_i,t_{i+1}]}(\hat{Z}),\qquad \bar{\hat{L}}_{t_i}=\pi_ {[t_i,t_{i+1}]}(\hat{L}),\qquad\text{and}\qquad\bar{\hat{U}}_{t_i}(e_j)=\pi_ {[t_i,t_{i+1}],K_j}(\hat{U}),
	\end{equation}
	for $i=0,\dots,N-1$ and $j=1,\dots$, where the $\pi_\cdot$-projections are given in Definition \ref{def:proj}.\\

	\noindent 6. For the processes $(Z^\epsilon, L^\epsilon, U^\epsilon)$ we define the correspond (mean-square) projection errors by
	\begin{equation}\label{def:R}
	\begin{cases}
	\mathcal{R}^2_{Z^\epsilon}(\Delta t):=\mathbb{E}\bigg[\displaystyle\sum_{i=0}^{N-1}\integ{t_i}{t_{i+1}}\big|Z^\epsilon_t-\pi_ {[t_i,t_{i+1}]}(Z^\epsilon)\big|^2dt\bigg],\\[0.4cm] \mathcal{R}^2_{L^\epsilon}(\Delta t):=\mathbb{E}\bigg[\displaystyle\sum_{i=0}^{N-1}\integ{t_i}{t_{i+1}}\big|L^\epsilon_t-\pi_ {[t_i,t_{i+1}]}(L^\epsilon)\big|^2dt\bigg],
\\[0.4cm]
	\mathcal{R}^2_{U^\epsilon}(\Delta t,h):=\mathbb{E}\bigg[\displaystyle\sum_{i=0}^{N-1}\displaystyle\sum_{j\geq 1}\integ{t_i}{t_{i+1}}\int_{K_j}\big|U^\epsilon_t(e)-\pi_ {[t_i,t_{i+1}],K_j}(U^\epsilon)\big|^2\nu(de)dt\bigg].
\end{cases}
	\end{equation}
When $(Z^\epsilon, L^\epsilon, U^\epsilon)$ belong to appropriate $L^2$-spaces, these errors converge to zero as $\Delta t,h\to 0$. This is the case when Theorem \ref{WP_FBSDE} holds.	
	
	\subsection{Convergence of the DeepBSDE scheme}
	%$L^2$-Results.}
The main results of this paper are the following two convergence results for Algorithm \ref{ALGO1}. The first result gives a strong/$L^2$ error bound for the convergence of the DeepBSDE solutions to the solutions of the approximate FBSDE \eqref{FSDE2}--\eqref{FBSDE2}.
	\begin{theorem}\label{theo-convergence1}
		Assume $\epsilon,\Delta t,h>0$,  $\zeta\in\{0,1\}$, (\textbf{H1}), and (\textbf{H2}). If $(X^{\epsilon,\Delta t},\hat{\mathcal{Y}},\hat{\mathcal{Z}},\hat{\mathcal{W}},\hat{\mathcal{U}})$ and $(X^\epsilon,Y^\epsilon,Z^\epsilon,L^{\epsilon},U^\epsilon)$ solve Algorithm \ref{ALGO1} and FBSDE \eqref{FSDE2}--\eqref{FBSDE2} respectively, then there is $C>0$ independent of  $\epsilon,\zeta,\Delta t,h$ such that
		\begin{align} \nonumber
		&\max_{i=0,\dots,N-1}\mathbb{E}\big|Y^\epsilon_{t_i}-\hat{\mathcal{Y}}_{i}(X^{\epsilon,\Delta t}_{t_i})\big|^2 \ + \ \mathbb{E}\bigg[\displaystyle\sum_{i=0}^{N-1}\integ{t_i}{t_{i+1}}\big|Z^\epsilon_t-\hat{\mathcal{Z}_i}(X^{\epsilon,\Delta t}_{t_i})\big|^2dt\bigg]\\
		&\quad+\ \mathbb{E}\bigg[\displaystyle\sum_{i=0}^{N-1}\displaystyle\integ{t_i}{t_{i+1}}\Big|\sum_{j\geq 1}\int_{K_j}\big|U^\epsilon_t(e)-\hat{\mathcal{U}}_i(X^{\epsilon,\Delta t}_{t_i},e_j)\big|\gamma_j\nu(de)\Big|^2dt\bigg]\label{Estimate Y}\\[0.2cm] 
		\nonumber
		&\leq C\Big(\be(1+|x|^2)\ee\Delta t+\mathcal{R}^2_{Z^\epsilon}(\Delta t)+\zeta\mathcal{R}^2_{L^\epsilon}(\Delta t)+\mathcal{R}^2_{U^\epsilon}(\Delta t,h)+\mathcal{R}^2_\gamma(h) \\ \nonumber
		&\hspace{3.5cm}+ \displaystyle\sum_{i=0}^{N-1}(\epsilon_i^{\mathcal{N},v}+\Delta t\epsilon_i^{\mathcal{N},z}+\zeta\Delta t\epsilon_i^{\mathcal{N},l}+\Delta t\epsilon_i^{\mathcal{N},\rho})\Big).
		\end{align}
		where $\mathcal{R}^2_{Z^\epsilon}, \mathcal{R}^2_{L^\epsilon},\mathcal{R}^2_{U^\epsilon}$ are defined in \eqref{def:R}, $\mathcal{R}^2_\gamma(h):=\displaystyle\sum_{j\geq 1}\int_{K_j}\big|\gamma(e)-\gamma_j\big|^2\nu(de)$, %\\[0.2cm]
		\begin{align*} &\epsilon_i^{\mathcal{N},v}:=\displaystyle\inf_{\theta}\mathbb{E}\big|\hat{v}_i(X^{\epsilon,\Delta t}_{t_i})-{\mathcal{Y}}_{i}(X^{\epsilon,\Delta t}_{t_i},\theta)\big|^2,\quad \epsilon_i^{\mathcal{N},z}:=\displaystyle\inf_{\eta}\mathbb{E}\big|\bar{\hat{z}}_i(X^{\epsilon,\Delta t}_{t_i})-{\mathcal{Z}}_{i}(X^{\epsilon,\Delta t}_{t_i},\eta)\big|^2,\\[0.2cm] &\epsilon_i^{\mathcal{N},l}:=\displaystyle\inf_{\lambda}\mathbb{E}\big|\bar{\hat{l}}_i(X^{\epsilon,\Delta t}_{t_i})-{\mathcal{W}}_{i}(X^{\epsilon,\Delta t}_{t_i},\lambda)\big|^2,\quad \epsilon_i^{\mathcal{N},\rho}:=\displaystyle\inf_{\alpha}\displaystyle\mathbb{E}\Big[\sum_{j\geq 1}\big|\bar{\hat{\rho}}_i(X^{\epsilon,\Delta t}_{t_i},e_j)-{\mathcal{U}}_{i}(X^{\epsilon,\Delta t}_{t_i},e_j,\alpha)\big|^2\nu(K_j)\Big],
		\end{align*}
		and $\hat{v}_i,\dots,\bar{\hat{\rho}}_i$ are Doob-Dynkin representations defined in \eqref{DDinterm} of the solutions of the intermediate scheme \eqref{NewBack}.
	\end{theorem}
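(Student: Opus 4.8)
The plan is to use the intermediate scheme \eqref{NewBack} as the pivot between the FBSDE solution and the Deep BSDE output of Algorithm \ref{ALGO1}, and to close the whole estimate by a single backward induction in $i$ terminated by a discrete Gronwall inequality. Conceptually, for each component I split the error by the triangle inequality, e.g.
\[
\mathbb{E}\big|Y^\epsilon_{t_i}-\hat{\mathcal{Y}}_i(X^{\epsilon,\Delta t}_{t_i})\big|^2 \le 2\,\mathbb{E}\big|Y^\epsilon_{t_i}-\hat{\mathcal{V}}_{t_i}\big|^2 + 2\,\mathbb{E}\big|\hat{\mathcal{V}}_{t_i}-\hat{\mathcal{Y}}_i(X^{\epsilon,\Delta t}_{t_i})\big|^2,
\]
and analogously for the $Z$-, $L$- and ($\gamma$-weighted) $U$-errors using $\bar{\hat{Z}}_{t_i}$, $\bar{\hat{L}}_{t_i}$, $\bar{\hat{U}}_{t_i}$; the first term carries the discretisation, projection and quadrature errors, the second the network approximation errors $\epsilon_i^{\mathcal{N},\cdot}$. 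Since $\hat{\mathcal{V}}_{t_i}$ depends on the already-fixed networks at times $l>i$ through $\mathcal{V}_{t_{i+1}}$ (cf.\ \eqref{Vi}), these two pieces are coupled and must be propagated together.

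The network-to-intermediate distance is where I would exploit the residual minimisation. Using the It\^o--L\'evy representation \eqref{eq-representation} of $\mathcal{V}_{t_{i+1}}$, the projection identities \eqref{piZLU}, and the orthogonality in Theorem \ref{teoremprojection} (together with independence of the increments $\Delta W_{t_i},\Delta\widetilde W_{t_i},\tilde\mu(K_j,[t_i,t_{i+1}))$), I would expand $\mathcal{R}_i(\theta,\eta,\lambda,\alpha)$ by the It\^o isometry into a \emph{reducible} part
\[
\mathbb{E}\big|\mathcal{Y}_i-\hat{\mathcal{V}}_{t_i}\big|^2 + \Delta t_i\,\mathbb{E}\big|\mathcal{Z}_i-\bar{\hat{Z}}_{t_i}\big|^2 + \zeta\,\Delta t_i\,\mathbb{E}\big|\mathcal{W}_i-\bar{\hat{L}}_{t_i}\big|^2 + \Delta t_i\sum_{j\ge1}\mathbb{E}\big|\mathcal{U}_i(\cdot,e_j)-\bar{\hat{U}}_{t_i}(e_j)\big|^2\nu(K_j)
\]
plus an \emph{irreducible} part built from the projection residuals of $\hat{Z},\hat{L},\hat{U}$ and the martingale tail on $[t_{i+1},t_N]$, which is independent of the step-$i$ parameters; the Pythagorean property kills the cross terms. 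Evaluating the minimum of $\mathcal{R}_i$ against test parameters that nearly realise the Doob--Dynkin functions $\hat{v}_i,\bar{\hat{z}}_i,\bar{\hat{l}}_i,\bar{\hat{\rho}}_i$ of \eqref{DDinterm} (which exist by the density Theorem \ref{UAthm}), the irreducible parts cancel and the reducible part at the optimum is bounded by $\epsilon_i^{\mathcal{N},v}+\Delta t_i\epsilon_i^{\mathcal{N},z}+\zeta\Delta t_i\epsilon_i^{\mathcal{N},l}+\Delta t_i\epsilon_i^{\mathcal{N},\rho}$.

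For the FBSDE-to-intermediate distance I would write both $Y^\epsilon_{t_i}$ and $\hat{\mathcal{V}}_{t_i}$ in their multi-step (global) form (cf.\ \eqref{shema2.1}), subtract, and use the Lipschitz bound (\textbf{H2}) on $f$ and $g$. Because $\mathcal{V}$ accumulates the driver from $t_i$ to $t_N$, the difference at step $i$ equals the conditional expectation of a sum over $l\ge i$ of one-step differences; the arguments of $f$ are estimated through the forward error \eqref{estiX} and the mean-square continuity of Lemma \ref{WPeps} (time-discretisation, giving $(1+|x|^2)\Delta t$), the projection errors of \eqref{def:R} in the $Z$-, $L$- and $U$-slots, the $\gamma$-quadrature error $\mathcal{R}^2_\gamma(h)$, and the step-$l$ errors themselves (propagation). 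Inserting the per-step bound from the previous paragraph and summing, the factor $1+C\Delta t_l$ in front of the step-$l$ errors, kept harmless by the multi-step structure of \cite{[GPW20]}, lets a discrete Gronwall inequality absorb the propagation and deliver the stated estimate with constants independent of $\Delta t,h$.

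The main obstacle I anticipate is keeping every constant uniform in $\epsilon$. Since $\epsilon\to0$ is a singular limit in which the compensating Gaussian coefficient $\sigma_\epsilon$ of \eqref{def-Sigma} and the truncated jump measure on $E^\epsilon$ balance, each It\^o isometry applied to the $\widetilde W$- and $\tilde\mu$-integrals must be tracked so that quantities like $\int_{E^\epsilon}|e|^2\nu(de)$ and $\int_{|e|\le\epsilon}|e|^2\nu(de)$ enter only the error terms and never the multiplicative constants; here \eqref{levy} and the growth bounds in (\textbf{H1})--(\textbf{H2}) are essential. A second, intertwined difficulty is the nonlocal bookkeeping: the $\tilde\mu$-isometry produces the weights $\nu(K_j)$, which must be recombined with the $\gamma$-quadrature via Cauchy--Schwarz and \eqref{Condi-gamma}, using the covering and convergence conditions (D1)--(D2) so that the infinite sums over $\{K_j\}$ converge and reassemble exactly into $\mathcal{R}^2_{U^\epsilon}(\Delta t,h)$ and $\mathcal{R}^2_\gamma(h)$ with no residual $\epsilon$- or $j$-dependence.
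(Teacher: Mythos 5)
Your proposal follows essentially the same route as the paper: it pivots through the intermediate scheme \eqref{NewBack}, controls the network-to-intermediate distance by the orthogonal It\^o--L\'evy expansion of $\mathcal{R}_i$ and comparison at the minimum (the paper's Lemma \ref{lem-errNN}), and controls the FBSDE-to-intermediate distance by a Lipschitz one-step recursion closed with discrete Gronwall (Lemma \ref{lem-tilda-V}), with the same triangle-inequality split and the same bookkeeping of projection, quadrature and network errors. The only device you leave implicit is the variance identity $\mathbb{E}\big|Y^\epsilon_{t_{i+1}}-\tilde{\mathcal{V}}_{t_{i+1}}-\mathbb{E}_i[Y^\epsilon_{t_{i+1}}-\tilde{\mathcal{V}}_{t_{i+1}}]\big|^2=\mathbb{E}\big|Y^\epsilon_{t_{i+1}}-\tilde{\mathcal{V}}_{t_{i+1}}\big|^2-\mathbb{E}\big|\mathbb{E}_i[Y^\epsilon_{t_{i+1}}-\tilde{\mathcal{V}}_{t_{i+1}}]\big|^2$, whose telescoping sum over $i$ is what allows the $Z$-, $L$- and $U$-discrepancies to be summed without incurring a factor of $N$.
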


\begin{remark}
(a)\ The first 4 error terms represent discretization errors in time (and jumps). They are discretisation errors when the intermediate scheme \eqref{NewBack} is seen as an approximation of the FBSDE \eqref{FSDE2}--\eqref{FBSDE2}. The $\mathcal{R}$-terms are projection errors \eqref{def:R}, and converge to zero as $\Delta t,h\to 0$ by Theorem \ref{WP_FBSDE}. 

As explained in Section \ref{sec:interm}, these terms come from an adaptation of the analysis of \cite{[BE08]}. Differently from \cite{[BE08]}, we have approximated the infinite activity part of the Poisson random integral by a Brownian motion, discretized the finite activity  part in jump space, and give a pointwise approximation of the $U$-component of the FBSDE.
\smallskip

\noindent (b)  $\mathcal{R}^2_\gamma(h)$ is  the (quadrature) approximation error of the $L^2$-integral of $\gamma$, which converges to
		zero as $h$ goes to zero by assumptions \eqref{levy} in (H1) and (H2) (ii). This error is due to our approximation of jump space.
		
\smallskip\noindent (c) The last 4 error terms are $L^2$-projection errors, or best approximation errors, when solutions of the intermediate scheme \eqref{NewBack} are approximated by neural nets. 
%of $\hat{v}_i, \bar{\hat{z}}_i, \bar{\hat{l}}_i$ and $\bar{\hat{\rho}}_i$ by neural networks ${\mathcal{Y}}_{i}, {\mathcal{Z}}_{i}, {\mathcal{W}}_{i}$ and ${\mathcal{U}}_{i}$. 
By the universal approximation property (Theorem \ref{UAthm}), they converge to zero as the number of neurons $m\to \infty$ (Section \ref{sec:DNN}). %\cite{[HSW90]}.
\end{remark}

	\begin{remark}
As opposed to previous works, we  approximate $U_\epsilon$ and prove error bounds in $L^2((0,T) \times \Omega; L^1(E; \gamma \nu))$. This is consistent with (but stronger than) \cite{[BE08]} where integrals $\Gamma_\epsilon=\int_E U_\epsilon\gamma\nu$ are proven to converge in $L^2((0,T) \times \Omega)$.
%error estimate for $U_\epsilon$ %if we make a special discretization of jumps space
Under additional assumptions on the subdivisions of $E$, we prove a stronger convergence result for the more natural space of $L^2(\Omega\times(0,T)\times E;P\otimes dt\otimes \nu)$ in Appendix \ref{appB}. The point is that we must refine more and more near the singularity $e=0$. To be precise,
%in this approximation can then be obtained by refining the subdivisions of $E$ % near the singularity $e=0$
%in a nonuniform way with highest resolution near the singularity: In Appendix \ref{appB} we show that
if there is $k_h>0$ such that $\nu(K_j)\gamma^2_j\geq k_h$, then
	   \begin{align} &k_h\;\mathbb{E}\bigg[\displaystyle\sum_{i=0}^{N-1}\displaystyle\sum_{j\geq 1}\integ{t_i}{t_{i+1}}\int_{K_j}\big|U^\epsilon_t(e)-\hat{\mathcal{U}}_i(X^{\epsilon,\Delta t}_{t_i},e_j)\big|^2\nu(de)dt\bigg] \nonumber\\ 
	   &\leq C\Big(\be(1+|x|^2)\ee\Delta t+\mathcal{R}^2_{Z^\epsilon}(\Delta t)+\mathcal{R}^2_{U^\epsilon}(\Delta t,h)+\zeta\mathcal{R}^2_{L^\epsilon}(\Delta t) +\mathcal{R}^2_\gamma(h)\label{strong error-U}\\ &\qquad\qquad\qquad\qquad\quad +\displaystyle\sum_{i=0}^{N-1}(\epsilon_i^{\mathcal{N},v}+\Delta t_i \epsilon_i^{\mathcal{N},z}+\zeta\Delta t_i\epsilon_i^{\mathcal{N},l}+\Delta t_i\epsilon_i^{\mathcal{N},\rho})\Big).\nonumber
		\end{align}
		\end{remark}
		
Combining Theorem \ref{estim-weak} and \ref{theo-convergence1}, we find the total error in the DeepBSDE approximation of the solution $u$ of the PIDE \eqref{PIDE}.
	\begin{corollary}
	(\textbf{Total errors of PIDE (\ref{PIDE})})
		Assume $\epsilon,\Delta t,h>0$,  $\zeta\in\{0,1\}$, (\textbf{H1}), (\textbf{H2}), (\textbf{H3}), and $u$ and $(X^{\epsilon,\Delta t},\hat{\mathcal{Y}},\hat{\mathcal{Z}},\hat{\mathcal{U}})$ solve \eqref{PIDE} and Algorithm \ref{ALGO1}. Then there is $C>0$ independent of $\zeta$, $\Delta t$, and $h$, such that
		\begin{align*}
		\displaystyle&\big|u(0,x)-\hat{\mathcal{Y}}_{0}(x)\big|\leq \be C(1+|x|)\Big(\Delta t+\zeta\epsilon^{1-\alpha/3}+(1-\zeta)\epsilon^{1-\alpha/2}\Big)\ee\\[0.2cm] 
&+C\Big(\mathcal{R}^2_\gamma(h)+\mathcal{R}^2_{Z^\epsilon}(\Delta t)+\zeta\mathcal{R}^2_{L^\epsilon}(\Delta t)+\mathcal{R}^2_{U^\epsilon}(\Delta t,h) +\displaystyle\sum_{i=0}^{N-1}(\epsilon_i^{\mathcal{N},v}+\Delta t\epsilon_i^{\mathcal{N},z}+\zeta\Delta t\epsilon_i^{\mathcal{N},l}+\Delta t\epsilon_i^{\mathcal{N},\rho})\Big)^{\frac12}.\end{align*}
	\end{corollary}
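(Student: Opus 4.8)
The plan is to derive the total error from a triangle inequality that inserts the intermediate PDE solution $u^{\epsilon}$, and then to control the two resulting pieces by Theorem \ref{estim-weak} and Theorem \ref{theo-convergence1} respectively. Concretely, I would start from
\[
\big|u(0,x)-\hat{\mathcal{Y}}_{0}(x)\big|\le \big|u(0,x)-u^{\epsilon}(0,x)\big|+\big|u^{\epsilon}(0,x)-\hat{\mathcal{Y}}_{0}(x)\big|,
\]
and treat the two terms separately. The first term is a pure PDE quantity and is bounded directly by part a) of Theorem \ref{estim-weak} evaluated at $t=0$, yielding the contribution $C(1+|x|)\big(\zeta\epsilon^{1-\alpha/3}+(1-\zeta)\epsilon^{1-\alpha/2}\big)$; the dependence on $x$ is linear here precisely because the coefficients in \eqref{PIDE} are allowed linear growth, matching the linear factor in the statement.

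For the second term the key observation is that \emph{everything is deterministic at the initial node}. By Lemma \ref{u=Y}(a) together with Remark \ref{U-u nonsmooth} (so that no smoothness of $u^{\epsilon}$ is required) we have $u^{\epsilon}(0,x)=Y_{0}^{\epsilon}$, and by construction of \eqref{DisSDE} the forward value $X_{0}^{\epsilon,\Delta t}=x$ is deterministic, so $\hat{\mathcal{Y}}_0(X_0^{\epsilon,\Delta t})=\hat{\mathcal{Y}}_0(x)$. Consequently the deterministic scalar satisfies
\[
\big|u^{\epsilon}(0,x)-\hat{\mathcal{Y}}_0(x)\big|^2=\mathbb{E}\big|Y_0^{\epsilon}-\hat{\mathcal{Y}}_0(X_0^{\epsilon,\Delta t})\big|^2\le \max_{i=0,\dots,N-1}\mathbb{E}\big|Y^{\epsilon}_{t_i}-\hat{\mathcal{Y}}_{i}(X^{\epsilon,\Delta t}_{t_i})\big|^2,
\]
and the right-hand side is exactly (a part of) the left-hand side of Theorem \ref{theo-convergence1}. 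I would then invoke that theorem to dominate it by the right-hand side there and take square roots.

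To reach the stated form I would split the square root by subadditivity, $\sqrt{a+b}\le \sqrt a+\sqrt b$: the time-step term $C(1+|x|^2)\Delta t$ produces a contribution of order $C(1+|x|)\sqrt{\Delta t}$, which I group with the weak $\epsilon$-rates into the first (non-square-rooted) factor, while the projection errors $\mathcal{R}^2_{Z^\epsilon},\mathcal{R}^2_{L^\epsilon},\mathcal{R}^2_{U^\epsilon},\mathcal{R}^2_\gamma$ and the network best-approximation errors $\epsilon_i^{\mathcal{N},\cdot}$ remain inside the second square root. Summing the two pieces and relabelling the (absolute) constants gives the claimed inequality.

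The argument is essentially bookkeeping, so the two points I would concentrate on are the following. First, the determinism step at $i=0$: it is what converts the global $L^2$ estimate of Theorem \ref{theo-convergence1} into a genuinely \emph{pointwise} bound for $|u^{\epsilon}(0,x)-\hat{\mathcal{Y}}_0(x)|$, and it relies on the identification $u^{\epsilon}(0,x)=Y_0^{\epsilon}$ holding in the viscosity (non-smooth) setting. Second, the tracking of constants: one must check that the final $C$ inherits independence of $\zeta,\Delta t,h$ from both theorems, while being allowed to depend on $\epsilon$ through the ``$\epsilon$ small enough'' hypothesis of Theorem \ref{estim-weak} — which is consistent with the statement, since independence of $\epsilon$ is not asserted there. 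I would also record that this route delivers the time-discretisation term with exponent $\tfrac12$ on $\Delta t$, so the first factor is naturally read with $\sqrt{\Delta t}$ (equivalently, for $\Delta t\le 1$ one has $\Delta t\le\sqrt{\Delta t}$, and the difference is absorbed into the constant).
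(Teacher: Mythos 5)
Your proof is correct and follows exactly the route the paper intends: the corollary is presented as a direct combination of Theorem \ref{estim-weak} and Theorem \ref{theo-convergence1}, via the triangle inequality through $u^{\epsilon}(0,x)=Y_0^{\epsilon}$ and the determinism of $X_0^{\epsilon,\Delta t}=x$ at the initial node, which converts the $L^2$ bound into a pointwise one. Your observation that this argument actually delivers $\sqrt{\Delta t}$ rather than $\Delta t$ in the first factor is also consistent with the paper's own remark immediately after the corollary that the rate is not better than $1/2$ in $\Delta t$.
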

Note that the rate is not better than $1/2$ in $\Delta t$ and that better rates in $\epsilon$ are obtained using approximation of small jumps by Brownian motion ($\zeta=1$).

	\section{Proof of convergence, Theorem \ref{theo-convergence1}.}\label{sec:pf}

	First we relate solutions of our DeepBSDE scheme to solutions of the intermediate scheme. 
		\begin{lemma}\label{lem-errNN} Assume $\epsilon,\Delta t,h>0$, $\zeta\in\{0,1\}$, (\textbf{H1}), (\textbf{H2}), (\textbf{H3}), and  $(X^{\epsilon,\Delta t},\hat{\mathcal{Y}},\hat{\mathcal{Z}},\hat{\mathcal{W}},\hat{\mathcal{U}})$ and $(\hat{\mathcal{V}},\bar{\hat{Z}},\bar{\hat{L}},\bar{\hat{U}})$ solve Algorithm \ref{ALGO1} and \eqref{NewBack}. Then
		there is $C>0$ independent of $\Delta t, h,\epsilon,\zeta$, such that
	\begin{equation}\label{errNN}
	\begin{array}{l}	\mathbb{E}\big|\hat{\mathcal{V}}_{t_i}-\hat{\mathcal{Y}}_i(X^{\epsilon,\Delta t}_{t_i})\big|^2+\Delta t\mathbb{E}\big|\bar{\hat{Z}}_{t_i}-\hat{\mathcal{Z}}_i(X^{\epsilon,\Delta t}_{t_i})\big|^2+\zeta\Delta t\mathbb{E}\big|\bar{\hat{L}}_{t_i}-\hat{\mathcal{W}}_i(X^{\epsilon,\Delta t}_{t_i})\big|^2\\ \\ +\Delta t \displaystyle\sum_{j\geq 1}\mathbb{E}\big|\bar{\hat{U}}_{t_i}(e_j)-\hat{\mathcal{U}}_i(X^{\epsilon,\Delta t}_{t_i},e_j)\big|^2\nu(K_j)\leq C(\epsilon_i^{\mathcal{N},v}+\Delta t\epsilon_i^{\mathcal{N},z}+\zeta\Delta t\epsilon_i^{\mathcal{N},l}+\Delta t \epsilon_i^{\mathcal{N},\rho}).
	\end{array}
	\end{equation}
	\end{lemma}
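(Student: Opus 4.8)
The plan is to expand the quadratic objective $\mathcal{R}_i$ of \eqref{Minfun} into a sum of decoupled squared errors, one per network component, by exploiting the orthogonality of the increments $\Delta W_{t_i}$, $\Delta\widetilde{W}_{t_i}$ and $\tilde\mu(K_j,(t_i,t_{i+1}])$. Writing $y,z,w,u_j$ for the $\mathcal{F}_{t_i}$-measurable outputs $\mathcal{Y}_i(X^{\epsilon,\Delta t}_{t_i},\theta)$, $\mathcal{Z}_i(\cdots,\eta)$, $\mathcal{W}_i(\cdots,\lambda)$, $\mathcal{U}_i(X^{\epsilon,\Delta t}_{t_i},e_j,\alpha)$, abbreviating by $f_{\mathrm{nn}}$ the driver evaluated at these outputs, and setting $S_{i+1}:=g(X^{\epsilon,\Delta t}_{t_N})+\sum_{l=i+1}^{N-1}F(t_l,X^{\epsilon,\Delta t}_{t_l},\theta_l^*,\eta_l^*,\lambda_l^*,\alpha_l^*)$, the residual is $\mathcal{R}_i=\mathbb{E}|R|^2$ with $R=(y-f_{\mathrm{nn}}\Delta t_i)-S_{i+1}+z\Delta W_{t_i}+\zeta w\Delta\widetilde{W}_{t_i}+\sum_{j\geq1}u_j\tilde\mu(K_j,(t_i,t_{i+1}])$.

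First I would observe that $S_{i+1}$ differs from $\mathcal{V}_{t_{i+1}}$ in \eqref{Vi} only by martingale increments over $[t_{i+1},t_N]$, which are mean-zero given $\mathcal{F}_{t_i}$ and orthogonal to the current-step increments; hence $\mathbb{E}_i[S_{i+1}]=\mathbb{E}_i[\mathcal{V}_{t_{i+1}}]$ and $\mathbb{E}_i[S_{i+1}\Delta W_{t_i}]=\Delta t_i\bar{\hat{Z}}_{t_i}$, with the analogues for $\Delta\widetilde{W}_{t_i}$ and $\tilde\mu(K_j,\cdot)$, all by the defining relations \eqref{NewBack}--\eqref{tilda projection}. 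Splitting $R=\mathbb{E}_i[R]+(R-\mathbb{E}_i[R])$, using the It\^o isometry together with $\mathbb{E}_i[(\Delta W_{t_i})^2]=\mathbb{E}_i[(\Delta\widetilde W_{t_i})^2]=\Delta t_i$, $\mathbb{E}_i[\tilde\mu(K_j,\cdot)^2]=\Delta t_i\nu(K_j)$ and the pairwise orthogonality of all increments, every cross term decouples and, after inserting $\mathbb{E}_i[\mathcal{V}_{t_{i+1}}]=\hat{\mathcal{V}}_{t_i}-\hat f_i\Delta t_i$ (first line of \eqref{NewBack}, with $\hat f_i$ the driver at the intermediate solution), I obtain the master identity
\begin{align*}
\mathcal{R}_i-C_i=&\ \mathbb{E}\big|(y-\hat{\mathcal{V}}_{t_i})-(f_{\mathrm{nn}}-\hat f_i)\Delta t_i\big|^2+\Delta t_i\,\mathbb{E}|z-\bar{\hat{Z}}_{t_i}|^2\\
&\ +\zeta\Delta t_i\,\mathbb{E}|w-\bar{\hat{L}}_{t_i}|^2+\Delta t_i\sum_{j\geq1}\mathbb{E}|u_j-\bar{\hat{U}}_{t_i}(e_j)|^2\nu(K_j),
\end{align*}
where $C_i$ collects the parameter-independent projection residuals of $\hat Z,\hat L,\hat U$ from \eqref{eq-representation}.

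With this identity, the lemma follows from two estimates. For a lower bound I control the coupling term using the Lipschitz bound $(\mathbf{H2})$: $|f_{\mathrm{nn}}-\hat f_i|\leq C_f(|y-\hat{\mathcal{V}}_{t_i}|+|z-\bar{\hat{Z}}_{t_i}|+|\sum_{j}(u_j-\bar{\hat{U}}_{t_i}(e_j))\gamma_j\nu(K_j)+\zeta D_e\gamma(0)\Sigma_\epsilon^{1/2}(w-\bar{\hat{L}}_{t_i})|)$; bounding the $\gamma_j$-sum by Cauchy--Schwarz via $\sum_{j}\gamma_j^2\nu(K_j)\leq\int_E\gamma^2\,\nu<\infty$ and then applying Young's inequality, each coupling contribution carries a factor $\Delta t_i^2$ and is absorbed into the diagonal $\Delta t_i$-weighted terms for $\Delta t$ small enough, giving $\mathcal{R}_i-C_i\geq c\cdot(\text{left-hand side of }\eqref{errNN})$. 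For the upper bound I invoke optimality of $(\theta^*_i,\eta^*_i,\lambda^*_i,\alpha^*_i)$: since the four networks carry disjoint parameters, I evaluate $\mathcal{R}_i$ at the networks that best approximate the Doob--Dynkin functions $\hat v_i,\bar{\hat z}_i,\bar{\hat l}_i,\bar{\hat\rho}_i$ of \eqref{DDinterm}, whose component errors are precisely $\epsilon_i^{\mathcal{N},v},\epsilon_i^{\mathcal{N},z},\epsilon_i^{\mathcal{N},l},\epsilon_i^{\mathcal{N},\rho}$; estimating the coupling term the same way yields $\mathcal{R}_i(\theta^*_i,\dots)-C_i\leq C(\epsilon_i^{\mathcal{N},v}+\Delta t\,\epsilon_i^{\mathcal{N},z}+\zeta\Delta t\,\epsilon_i^{\mathcal{N},l}+\Delta t\,\epsilon_i^{\mathcal{N},\rho})$. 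Chaining the two inequalities through the optimal value $\mathcal{R}_i(\theta^*_i,\dots)-C_i$ proves \eqref{errNN}.

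The main obstacle is the implicit, coupled driver: $f$ is evaluated at all four outputs (and semi-implicitly at $y$ itself through $f_{\mathrm{nn}}\Delta t_i$), so the $y$-error does not decouple cleanly, and the $u$-error enters only through the weighted integral $\sum_j(\cdot)\gamma_j\nu(K_j)$ rather than the full $\mathbb{L}^2_\nu$ norm. The resolution is that every such coupling appears multiplied by $\Delta t_i$, hence is genuinely of order $\Delta t_i^2$ after Cauchy--Schwarz and Young and is dominated by the diagonal $\Delta t_i$-weighted errors. I expect the only delicate bookkeeping to be verifying that the absorption works with $c$ independent of $\epsilon,\zeta,\Delta t,h$, which holds because the relevant constants -- $C_f$, $\int_E\gamma^2\,\nu$ (finite by \eqref{levy} and \eqref{Condi-gamma}), and the norm of $D_e\gamma(0)\Sigma_\epsilon^{1/2}$, uniformly bounded in $\epsilon$ -- are all uniform.
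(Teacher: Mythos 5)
Your proof is correct and follows essentially the same route as the paper: the same decomposition of $\mathcal{R}_i$ into a parameter-dependent part (your ``master identity'', the paper's $L_i$) plus a parameter-independent constant, the same Young/Lipschitz absorption of the driver coupling for the lower bound, and the same optimality-plus-Lipschitz argument (with $\sum_j\gamma_j^2\nu(K_j)<\infty$ and the uniform bound on $D_e\gamma(0)\Sigma_\epsilon^{1/2}$) for the upper bound. The only cosmetic difference is that you organize the decoupling via the split $R=\mathbb{E}_i[R]+(R-\mathbb{E}_i[R])$ together with the projection identities of \eqref{NewBack}, whereas the paper first writes out the martingale representation \eqref{eq-representation} explicitly and then expands the square using orthogonality of the projection errors.
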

\begin{proof}
In the functional (\ref{Minfun}) to be minimised to determine $(\hat{\mathcal{Y}},\hat{\mathcal{Z}},\hat{\mathcal{W}},\hat{\mathcal{U}})$, we replace the $g$-term \be and introduce $(\hat{\mathcal{V}},\bar{\hat{Z}},\bar{\hat{L}},\bar{\hat{U}})$ using equation (\ref{eq-representation}): \ee For $i\in \{0, \dots , N- 1\}$,\ee 
	\begin{align*}	&\mathcal{R}_i(\theta,\eta,\lambda,\alpha)=\\
	&\quad \mathbb{E}\, \bigg|\,\hat{\mathcal{V}}_{t_i}-\mathcal{Y}_i(X^{\epsilon,\Delta t}_{t_i},\theta)
	+\Big[f(\cdots,\mathcal{Y}_i(X^{\epsilon,\Delta t}_{t_i},\theta),\cdots)
	-f(\cdots,\hat{\mathcal{V}}_{t_i}(X^{\epsilon,\Delta t}_{t_i}),\cdots)\Big]\Delta t_i\\
	&\qquad+\displaystyle\sum_{l=i+1}^{N-1}\integ{t_l}{t_{l+1}}\hat{Z}_s-\hat{\mathcal{Z}}_{l}(X^{\epsilon,\Delta t}_{t_l})\;dW_s +\displaystyle\int_{t_i}^{t_{i+1}}\hat{Z}_s-\mathcal{Z}_{i}(X^{\epsilon,\Delta t}_{t_i},\eta)\;dW_{s}\\
	&\qquad+\zeta\displaystyle\sum_{l=i+1}^{N-1}\integ{t_l}{t_{l+1}}\hat{L}_s-\hat{\mathcal{W}}_{l}(X^{\epsilon,\Delta t}_{t_l})\;d\widetilde{W}_s+\zeta\displaystyle\int_{t_i}^{t_{i+1}}\hat{L}_s-\mathcal{W}_{i}(X^{\epsilon,\Delta t}_{t_i},\lambda)\;d\widetilde{W}_{s}\\ &\qquad+\displaystyle\sum_{l=i+1}^{N-1}\sum_{j\geq 1}\integ{t_l}{t_{l+1}}\int_{K_j}\hat{U}_s(e)-\hat{\mathcal{U}}_{l}(X^{\epsilon,\Delta t}_{t_l},e_j)\;\tilde{\mu}(de,ds)\\
	&\hspace{6.4cm}+\displaystyle\sum_{j\geq 1}\integ{t_i}{t_{i+1}}\int_{K_j}\hat{U}_s(e)-\mathcal{U}_{i}(X^{\epsilon,\Delta t}_{t_i},e_j,\alpha)\;\tilde{\mu}(de,ds)\,\bigg|^2.
	\end{align*}
The first 4 terms are $\mathcal{F}_{t_i}$-measurable and therefore independent of the remaining stochastic integrals. Exploiting this independence and adding and subtracting $(\bar{\hat{Z}}_{t_l},\bar{\hat{L}}_{t_l},\bar{\hat{U}}_{t_l})$, we then have
	\begin{align*} &\mathcal{R}_i(\theta,\eta,\lambda,\alpha)=\mathbb{E}\,\Big|\hat{\mathcal{V}}_{t_i}-\mathcal{Y}_i+\Big[f(\cdots,\mathcal{Y}_i,\mathcal{Z}_i,\cdots)
	-f(\cdots,\hat{\mathcal{V}}_{t_i},\bar{\hat{Z}}_{t_i},\cdots)
	\Big]\Delta t_i\Big|^2 \\
	&+\mathbb{E}\,\bigg|\displaystyle\sum_{l=i}^{N-1}\integ{t_l}{t_{l+1}}\hat{Z}_s-\bar{\hat{Z}}_{t_l}\,dW_s+ \displaystyle\sum_{l=i+1}^{N-1}(\bar{\hat{Z}}_{t_l}-\hat{\mathcal{Z}}_{l}(X^{\epsilon,\Delta t}_{t_l}))\,\Delta W_{t_l}+(\bar{\hat{Z}}_{t_i}-\mathcal{Z}_{i}(X^{\epsilon,\Delta t}_{t_i},\eta))\,\Delta W_{t_i}
\\
&+\zeta\displaystyle\sum_{l=i}^{N-1}\integ{t_l}{t_{l+1}}\hat{L}_s-\bar{\hat{L}}_{t_l}\;d \widetilde{W}_s+ \zeta\displaystyle\sum_{l=i+1}^{N-1}(\bar{\hat{L}}_{t_l}-\hat{\mathcal{W}}_{l}(X^{\epsilon,\Delta t}_{t_l}))\,\Delta \widetilde{W}_{t_l}+\zeta(\bar{\hat{L}}_{t_i}-\mathcal{W}_{i}(X^{\epsilon,\Delta t}_{t_i},\lambda))\,\Delta \widetilde{W}_{t_i}
\\
&+\displaystyle\sum_{l=i}^{N-1}\sum_{j\geq 1}\integ{t_l}{t_{l+1}}\int_{K_j}\hat{U}_s(e)-\bar{\hat{U}}_{t_l}(e_j)\,\tilde{\mu}(de,ds)+\displaystyle\sum_{l=i+1}^{N-1}\sum_{j\geq 1}(\bar{\hat{U}}_{t_l}(e_j)-\hat{\mathcal{U}}_{l}(X^{\epsilon,\Delta t}_{t_l},e_j))\,\tilde{\mu}(K_j,(t_l,t_{l+1}])\\
&\hspace{7.5cm}+\displaystyle\sum_{j\geq 1}(\bar{\hat{U}}_{t_i}(e_j)-\mathcal{U}_{i}(X^{\epsilon,\Delta t}_{t_i},e_j,\alpha))\,\tilde{\mu}(K_j,(t_i,t_{i+1}])\bigg|^2,
	\end{align*}
Note that $\bar{\hat{Z}}_{t_i }$, $\bar{\hat{L}}_{t_i }$ and $\bar{\hat{U}}_{t_i }(e_j)$ are $L^2$-projections of $\hat{Z}_t$, $\hat{L}_t$ and $\hat{U}_t(e)$ by \eqref{piZLU}, and hence cross-terms on the same time-interval like e.g. $\mathbb E[ \int_{t_l}^{t_{l+1}}(\hat{Z}_s-\bar{\hat{Z}}_{t_l }) \,dW_s\,(\bar{\hat{Z}}_{t_l}-\hat{\mathcal{Z}}_{l}(X^{\epsilon,\Delta t}_{t_l})) \Delta W_l]$ are zero. By expanding the square and using independence of stochastic increments/integrals, orthogonality of projection errors, and the Levy-It\^{o} isometry, we then find that
	\begin{align}
	\mathcal{R}_i(\theta,\eta,\lambda,\alpha)=&\ L_i(\theta,\eta,\lambda,\alpha)+\mathbb{E}\bigg[\sum_{l=i}^{N-1} \integ{t_l}{t_{l+1}}|\hat{Z}_s-\bar{\hat{Z}}_{t_l}|^2ds\bigg]+\mathbb{E}\bigg[\sum_{l=i+1}^{N-1} |\bar{\hat{Z}}_{t_l}-\hat{\mathcal{Z}}_{l}(X^{\epsilon,\Delta t}_{t_l})|^2\Delta t_l\bigg]\nonumber\\  &+\zeta\mathbb{E}\bigg[\sum_{l=i}^{N-1} \integ{t_l}{t_{l+1}}|\hat{L}_s-\bar{\hat{L}}_{t_l}|^2ds\bigg]+\zeta\mathbb{E}\bigg[\sum_{l=i+1}^{N-1} |\bar{\hat{L}}_{t_l}-\hat{\mathcal{W}}_{l}(X^{\epsilon,\Delta t}_{t_l})|^2\Delta t_l\bigg]\nonumber\\ & +\mathbb{E}\bigg[\sum_{l=i}^{N-1}\sum_{j\geq 1}\int_{t_l}^{t_{l+1}}\int_{K_j}|\hat{U}_s(e)-\bar{\hat{U}}_{t_l}(e_j)|^2\nu(de)ds\bigg]\nonumber\\&+\mathbb{E}\bigg[\sum_{l=i+1}^{N-1}\sum_{j\geq 1}\int_{t_l}^{t_{l+1}}\int_{K_j}|\bar{\hat{U}}_{t_l}(e_j)-\hat{\mathcal{U}}_{l}(X^{\epsilon,\Delta t}_{t_l},e_j)|^2\nu(K_j)\Delta t_l\bigg],\nonumber
	\end{align}
	with
	\begin{align*} L_i(\theta,\eta,\lambda,\alpha):=&\ \mathbb{E}\,\Big|\hat{\mathcal{V}}_{t_i}-\mathcal{Y}_i(X^{\epsilon,\Delta t}_{t_i},\theta)+\Big[f(\cdots,\mathcal{Y}_i,\mathcal{Z}_i,\cdots)
	-f(\cdots,\hat{\mathcal{V}}_{t_i},\bar{\hat{Z}}_{t_i},\cdots)\Big]
	\Delta t_i\Big|^2\\[0.2cm] &+\mathbb{E}\big|\bar{\hat{Z}}_{t_i}-\mathcal{Z}_{i}(X^{\epsilon,\Delta t}_{t_i},\eta)\big|^2\Delta t_{i}+\zeta\, \mathbb{E}\big|\bar{\hat{L}}_{t_i}-\mathcal{W}_{i}(X^{\epsilon,\Delta t}_{t_i},\lambda)\,\big|^2\Delta t_{i}\\[0.2cm]
	&+\sum_{j\geq 1}\mathbb{E}\big|\bar{\hat{U}}_{t_i}(e_j)-\mathcal{U}_{i}(X^{\epsilon,\Delta t}_{t_i},e_j,\alpha)\big|^2\nu(K_j)\Delta t_i.
	\end{align*}
	By expanding the square, using the Lipschitz condition on $f$ in (H2) (i), \be Cauchy-Schwartz on the $U\gamma$-terms, and the bounds on $\gamma$ in \ee (H2) (ii), we get an upper bound on $L_i$,
	\begin{equation}
	\begin{array}{ll}\label{est1-L}
	L_i(\theta,\eta,\lambda,\alpha)\leq C\mathbb{E}\big|\hat{\mathcal{V}}_{t_i}-\mathcal{Y}_i(X^{\epsilon,\Delta t}_{t_i},\theta)\big|^2+C\Delta t_i\mathbb{E}\big|\bar{\hat{Z}}_{t_i}-\mathcal{Z}_{i}(X^{\epsilon,\Delta t}_{t_i},\eta)\big|^2\\\\\quad+\, C\zeta\Delta t_i\mathbb{E}\big|\bar{\hat{L}}_{t_i}-\mathcal{W}_{i}(X^{\epsilon,\Delta t}_{t_i},\lambda)\big|^2+C\Delta t_i \displaystyle\sum_{j\geq 1}\mathbb{E}\big|\bar{\hat{U}}_{t_i}(e_j)-\mathcal{U}_{i}(X^{\epsilon,\Delta t}_{t_i},e_j,\alpha)\big|^2\nu(K_j).
\end{array}
	\end{equation}
A lower bound can be found using the following consequence of Young's inequality, $$(a+b)^2\geq(1-r\Delta t_i)a^2+(1-\frac{1}{r\Delta t_i})b^2\geq(1-r\Delta t_i)a^2-\frac{1}{r \Delta t_i}b^2\qquad\text{for}\qquad r>0,$$ 
combined with Lipschitz continuity of $f$, \be the bounds on $\gamma$, \ee and the choice $r = 2C_1$,
	\begin{align}
	&L_i(\theta,\eta,\lambda,\alpha)\geq (1-r\Delta t_i)\mathbb{E}|\hat{\mathcal{V}}_{t_i}-\mathcal{Y}_i|^2\nonumber\\[0.2cm]
	&\quad-\frac{C_1\Delta t_i}{r}\Big(\mathbb{E}|\hat{\mathcal{V}}_{t_i}-\mathcal{Y}_i|^2+\mathbb{E}|\bar{\hat{Z}}_{t_i}-\mathcal{Z}_{i}|^2\nonumber  +\zeta\,\mathbb{E}|\bar{\hat{L}}_{t_i}-\mathcal{W}_{i}|^2+\sum_{j\geq 1}\mathbb{E}|\bar{\hat{U}}_{t_i}(e_j)-\mathcal{U}_{i,j}|^2\nu(K_j)\Big)\nonumber\\ &
	\quad+\mathbb{E}|\bar{\hat{Z}}_{t_i}-\mathcal{Z}_{i}|^2\Delta t_{i} +\zeta\,\mathbb{E}|\bar{\hat{L}}_{t_i}-\mathcal{W}_{i}|^2\Delta t_{i} +\sum_{j\geq 1}\mathbb{E}|\bar{\hat{U}}_{t_i}(e_j)-\mathcal{U}_{i,j}|^2\nu(K_j)\Delta t_i. \nonumber\\
	&\geq (1-C\Delta t_i)\mathbb{E}|\hat{\mathcal{V}}_{t_i}-\mathcal{Y}_i|^2\label{esti2-L}\\  
	&\quad +\frac{\Delta t_i}{2}\Big(\mathbb{E}|\bar{\hat{Z}}_{t_i}-\mathcal{Z}_{i}|^2 +\zeta\mathbb{E}|\bar{\hat{L}}_{t_i}-\mathcal{W}_{i}|^2+\sum_{j\geq 1}\mathbb{E}|\bar{\hat{U}}_{t_i}(e_j)-\mathcal{U}_{i,j}|^2\nu(K_j)\Big).\nonumber
 	\end{align}	
To get to the solutions of Algorithm \ref{ALGO1}, we now take a minimum $(\theta^*_i,\eta^*_i,\lambda^*_i,\alpha^*_i)$ of $\mathcal{R}_i(\theta,\eta,\lambda,\alpha)$ so that
	$\hat{\mathcal{Y}}_i=\mathcal{Y}_i(.,\theta_i^*)$, $\hat{\mathcal{Z}}_i=\mathcal{Z}_i(.,\eta_i^*)$, $\hat{\mathcal{W}}_i=\mathcal{W}_i(.,\lambda_i^*)$, and  $\hat{\mathcal{U}}_i=\mathcal{U}_i(.,\alpha_i^*)$. Since $\mathcal{R}_i(\theta,\eta,\lambda,\alpha)$ and $L_i(\theta_i,\eta_i,\alpha_i)$ have the same minima, by the bounds for $L_i$, (\ref{est1-L}) and (\ref{esti2-L}), we then find that
	\begin{align*}
	&(1-C\Delta t_i)\mathbb{E}\big|\hat{\mathcal{V}}_{t_i}-\hat{\mathcal{Y}}_i(X^{\epsilon,\Delta t}_{t_i})\big|^2+\displaystyle\frac{\Delta t_i}{2}\mathbb{E}\big|\bar{\hat{Z}}_{t_i}-\hat{\mathcal{Z}}_i(X^{\epsilon,\Delta t}_{t_i})\big|^2 +\zeta\displaystyle\frac{\Delta t_i}{2}\mathbb{E}\big|\bar{\hat{L}}_{t_i}-\hat{\mathcal{W}}_i(X^{\epsilon,\Delta t}_{t_i})\big|^2\\[0.2cm] &+\displaystyle\frac{\Delta t_i}{2}\displaystyle\sum_{j\geq 1}\mathbb{E}\big|\bar{\hat{U}}_{t_i}(e_j)-\hat{\mathcal{U}}_i(X^{\epsilon,\Delta t}_{t_i},e_j)\big|^2\nu(K_j)\\[0.2cm] \leq &\ L_i(\theta^*_i,\eta^*_i,\lambda_i^*,\alpha^*_i)\leq L_i(\theta_i,\eta_i,\lambda_i,\alpha_i)\\[0.2cm]
	\leq &\ C\,\mathbb{E}\big|\hat{\mathcal{V}}_{t_i}-\mathcal{Y}_i(X^{\epsilon,\Delta t}_{t_i},\theta)\big|^2+C\Delta t_i\,\mathbb{E}\big|\bar{\hat{Z}}_{t_i}-\mathcal{Z}_{i}(X^{\epsilon,\Delta t}_{t_i},\eta)\big|^2+C\zeta\Delta t_i\,\mathbb{E}\big|\bar{\hat{L}}_{t_i}-\mathcal{W}_{i}(X^{\epsilon,\Delta t}_{t_i},\lambda)\big|^2\\[0.2cm]
	&+C\Delta t_i \displaystyle\sum_{j\geq 1}\mathbb{E}\big|\bar{\hat{U}}_{t_i}(e_j)-\mathcal{U}_{i}(X^{\epsilon,\Delta t}_{t_i},e_j,\alpha)\big|^2\nu(K_j).\nonumber
	\end{align*}
By the definition of the $\epsilon_i^{\mathcal{N},\cdot}$-terms in Theorem \ref{theo-convergence1} and for $\Delta t$ small enough, we conclude that
	\begin{align*}
	&\mathbb{E}\big|\hat{\mathcal{V}}_{t_i}-\hat{\mathcal{Y}}_i(X^{\epsilon,\Delta t}_{t_i})\big|^2+\Delta t_i\mathbb{E}\big|\bar{\hat{Z}}_{t_i}-\hat{\mathcal{Z}}_i(X^{\epsilon,\Delta t}_{t_i})\big|^2+\zeta\Delta t_i\mathbb{E}\big|\bar{\hat{L}}_{t_i}-\hat{\mathcal{W}}_i(X^{\epsilon,\Delta t}_{t_i})\big|^2\\[0.2cm]
	&+\Delta t_i \displaystyle\sum_{j\geq 1}\mathbb{E}\big|\bar{\hat{U}}_{t_i}(e_j)-\hat{\mathcal{U}}_i(X^{\epsilon,\Delta t}_{t_i},e_j)\big|^2\nu(K_j)\leq C(\epsilon_i^{\mathcal{N},v}+\Delta t\epsilon_i^{\mathcal{N},z}+\zeta\Delta t\epsilon_i^{\mathcal{N},l}+\Delta t \epsilon_i^{\mathcal{N},\rho}),
	\end{align*}
	and the result follows.
\end{proof}

Next we relate solutions of the intermediate scheme to solutions of the FBSDE \eqref{FSDE2}--\eqref{FBSDE2}.

\begin{lemma}\label{lem-tilda-V}
Assume $\epsilon,\Delta t,h>0$, $\zeta\in\{0,1\}$, (\textbf{H1}), (\textbf{H2}),  $(X^\epsilon_t,Y^\epsilon_{t},Z^\epsilon_{t},L_t^\epsilon,U_t^\epsilon(e))$ solve \eqref{FSDE2}--\eqref{FBSDE2},  $(\hat{\mathcal{V}},\bar{\hat{Z}},\bar{\hat{L}},\bar{\hat{U}})$ solve  \eqref{NewBack}, and $\tilde{\mathcal{V}}$ is defined by \eqref{shema2.1}. Then there is a constant $C>0$ independent of $\Delta t$, $h$, $\zeta$, $\epsilon$ such that
		\begin{align}\label{Y-estim-rate}
		\max_{i=0,\dots, N-1}\mathbb{E}|Y^\epsilon_{t_i}-\tilde{\mathcal{V}}_{t_i}|^2\leq&\; C\bigg(\be(1+|x|^2)\ee\Delta t+\mathcal{R}^2_\gamma(h)+\mathcal{R}^2_{Z^\epsilon}(\Delta t)+\mathcal{R}^2_{L^\epsilon}(\Delta t)+\mathcal{R}^2_{U^\epsilon}(\Delta t,h)
		 \\  &
		\hspace{3cm}+\sum_{i=0}^{N-1} (\epsilon_i^{\mathcal{N},v}+\Delta t\epsilon_i^{\mathcal{N},z}+\Delta t\epsilon_i^{\mathcal{N},l}+ \Delta t\epsilon_i^{\mathcal{N},\rho})\bigg).\nonumber
		\end{align}
	\end{lemma}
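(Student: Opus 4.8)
The plan is to run a backward one-step stability analysis following \cite{[BT04],HPW20}, tracking only the $Y$-error $\delta_i:=Y^\epsilon_{t_i}-\tilde{\mathcal{V}}_{t_i}$ and feeding the $Z$-, $L$-, $U$-projection and neural-net errors into the right-hand side. First I would take the conditional expectation $\mathbb{E}_i$ of the BSDE \eqref{FBSDE2} on $[t_i,t_{i+1}]$; the three stochastic integrals are mean-zero martingales and drop out, leaving
$$Y^\epsilon_{t_i}=\mathbb{E}_i\Big[Y^\epsilon_{t_{i+1}}+\integ{t_i}{t_{i+1}}f\big(s,X^\epsilon_s,Y^\epsilon_s,Z^\epsilon_s,\Gamma^\epsilon_s+\zeta D_e\gamma(0)\Sigma^{\frac12}_\epsilon L^\epsilon_s\big)\,ds\Big].$$
Subtracting the recursion \eqref{shema2.1} for $\tilde{\mathcal{V}}_{t_i}$ and writing $\Delta t_i=\integ{t_i}{t_{i+1}}ds$ gives the identity $\delta_i=\mathbb{E}_i[\delta_{i+1}]+\mathbb{E}_i[\mathcal{D}_i]$, where $\mathcal{D}_i$ is the integral over $[t_i,t_{i+1}]$ of the difference between the continuous FBSDE driver and the frozen, neural-net driver of \eqref{shema2.1}.

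Next I would square, take expectations, and apply conditional Jensen ($\mathbb{E}|\mathbb{E}_i[\delta_{i+1}]|^2\le\mathbb{E}|\delta_{i+1}|^2$) together with the Young-type inequality $(a+b)^2\le(1+\rho\Delta t_i)a^2+(1+\tfrac1{\rho\Delta t_i})b^2$ to obtain
$$\mathbb{E}|\delta_i|^2\le(1+\rho\Delta t_i)\,\mathbb{E}|\delta_{i+1}|^2+\Big(1+\tfrac1{\rho\Delta t_i}\Big)\mathbb{E}|\mathbb{E}_i[\mathcal{D}_i]|^2.$$
The heart of the proof is to bound $\mathbb{E}|\mathbb{E}_i[\mathcal{D}_i]|^2$ using the Lipschitz bound on $f$ from \textbf{(H2)}(i). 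Each of the five Lipschitz contributions splits into a tractable piece: the time increment $|s-t_i|^{1/2}$ and the state difference $|X^\epsilon_s-X^{\epsilon,\Delta t}_{t_i}|$ produce $O((1+|x|^2)\Delta t)$ via the mean-square continuity of Lemma \ref{WPeps} and the Euler estimate \eqref{estiX}; the $Y$-slot difference $|Y^\epsilon_s-\hat{\mathcal{Y}}_{t_i}(X^{\epsilon,\Delta t}_{t_i})|$ is handled by telescoping through $Y^\epsilon_{t_i}$ and $\tilde{\mathcal{V}}_{t_i}$ (which differ by $\delta_i$) and through the neural-net bound of Lemma \ref{lem-errNN}, contributing $\delta_i$-terms (absorbed for $\Delta t$ small) and $\epsilon_i^{\mathcal{N},v}$.

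The delicate contributions are the $Z$-, $L$- and nonlocal $U\gamma$-slots. For $Z$ I would insert the FBSDE projection and write
$$Z^\epsilon_s-\hat{\mathcal{Z}}_i(X^{\epsilon,\Delta t}_{t_i})=\big(Z^\epsilon_s-\pi_{[t_i,t_{i+1}]}(Z^\epsilon)\big)+\big(\pi_{[t_i,t_{i+1}]}(Z^\epsilon)-\bar{\hat{Z}}_{t_i}\big)+\big(\bar{\hat{Z}}_{t_i}-\hat{\mathcal{Z}}_i(X^{\epsilon,\Delta t}_{t_i})\big),$$
whose three pieces feed respectively into $\mathcal{R}^2_{Z^\epsilon}(\Delta t)$ of \eqref{def:R}, into a $\delta_{i+1}$-type term (comparing $\bar{\hat{Z}}_{t_i}=\tfrac1{\Delta t_i}\mathbb{E}_i[\tilde{\mathcal{V}}_{t_{i+1}}\Delta W_{t_i}]$ from \eqref{tilda projection} with $\pi_{[t_i,t_{i+1}]}(Z^\epsilon)=\tfrac1{\Delta t_i}\mathbb{E}_i[\integ{t_i}{t_{i+1}}Z^\epsilon_s\,ds]$ from Corollary \ref{cor:proj}), and into $\Delta t\,\epsilon_i^{\mathcal{N},z}$ via Lemma \ref{lem-errNN}; the $L$-slot is identical with $\widetilde W$ and $\mathcal{R}^2_{L^\epsilon}$. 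The nonlocal slot compares $\Gamma^\epsilon_s=\int_{E^\epsilon}U^\epsilon_s(e)\gamma(e)\nu(de)$ with the quadrature $\sum_{j}\hat{\mathcal{U}}_i(X^{\epsilon,\Delta t}_{t_i},e_j)\gamma_j\nu(K_j)$; here I would add and subtract $\sum_{j}\pi_{[t_i,t_{i+1}],K_j}(U^\epsilon)\gamma_j\nu(K_j)$ and use Cauchy--Schwarz together with the bound $0\le\gamma\le C_\gamma(1\wedge|e|)$ from \textbf{(H2)}(ii) to isolate the $U$-projection error $\mathcal{R}^2_{U^\epsilon}(\Delta t,h)$, the $\gamma$-quadrature error $\mathcal{R}^2_\gamma(h)$, the FBSDE-versus-$\bar{\hat{U}}$ term (through $\tilde\mu(K_j,\cdot)$ and \eqref{tilda projection}), and the neural-net error $\Delta t\,\epsilon_i^{\mathcal{N},\rho}$.

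Collecting everything yields a recursion $\mathbb{E}|\delta_i|^2\le(1+C\Delta t_i)\mathbb{E}|\delta_{i+1}|^2+C\Delta t_i\,\mathcal{E}_i$, with $\mathcal{E}_i$ the per-step sum of the above error contributions; a discrete Gronwall argument, the terminal bound $\mathbb{E}|\delta_N|^2=\mathbb{E}|g(X^\epsilon_{t_N})-g(X^{\epsilon,\Delta t}_{t_N})|^2\le C(1+|x|^2)\Delta t$ from \eqref{estiX} and Lipschitz $g$, and a maximum over $i$ then give \eqref{Y-estim-rate}. I expect the main obstacle to be the middle, projection-of-FBSDE-versus-$\bar{\hat{Z}}$ (resp.\ $\bar{\hat{L}}$, $\bar{\hat{U}}$) terms: since $\bar{\hat{Z}}$ carries a factor $1/\Delta t_i$, a naive estimate produces a $\delta_{i+1}$ contribution lacking a compensating $\Delta t$, which would break the Gronwall step. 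Resolving this requires exploiting the $L^2$-orthogonality of the projections \eqref{piZLU} together with the global multi-step representation \eqref{eq-representation}--\eqref{Vi}, so that these terms recombine into genuine projection and discretization errors rather than un-absorbable $\delta$-terms; keeping every constant independent of $\epsilon$ despite the $\Sigma^{\frac12}_\epsilon$-scaling of the Gaussian-compensation terms adds a further layer of care.
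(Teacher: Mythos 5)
Your outline follows the same architecture as the paper's proof (one-step backward recursion for $\delta_i:=Y^\epsilon_{t_i}-\tilde{\mathcal V}_{t_i}$, the same three-way splittings of the $Z$-, $L$- and $U\gamma$-slots into projection error, $\pi$-versus-$\bar{\hat\cdot}$ error, and neural-net error, and a discrete Gronwall step), and you correctly locate the danger: the comparison of $\pi_{[t_i,t_{i+1}]}(Z^\epsilon)$ with $\bar{\hat Z}_{t_i}=\tfrac1{\Delta t_i}\mathbb{E}_i[\tilde{\mathcal V}_{t_{i+1}}\Delta W_{t_i}]$ produces, after the $(1+\tfrac1{\rho\Delta t_i})$ prefactor, an $O(1)$ multiple of a $\delta_{i+1}$-type quantity with no compensating $\Delta t$. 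But your proposal as written does not close at exactly this point, and the fix you name (the orthogonality \eqref{piZLU} together with the multi-step representation \eqref{eq-representation}) is not the mechanism that works there — those identities are the engine of Lemma \ref{lem-errNN}, not of this step. The step that actually saves the argument is that you must \emph{not} apply conditional Jensen $\mathbb{E}|\mathbb{E}_i[\delta_{i+1}]|^2\le\mathbb{E}|\delta_{i+1}|^2$ at the outset, as your second display does. The paper keeps $(1+\delta\Delta t_i)\,\mathbb{E}|\mathbb{E}_i[\delta_{i+1}]|^2$ intact, and then shows (writing $\int Z^\epsilon dW$ via the BSDE, subtracting $\Delta W_{t_i}\,\mathbb{E}_i[\delta_{i+1}]$ inside the conditional expectation, and applying Cauchy--Schwarz with $\mathbb{E}_i|\Delta W_{t_i}|^2=\Delta t_i$) that
\begin{equation*}
\Delta t_i^2\,\mathbb{E}\big|\pi_{[t_i,t_{i+1}]}(Z^\epsilon)-\bar{\hat Z}_{t_i}\big|^2\le 2\Delta t_i\Big(\mathbb{E}|\delta_{i+1}|^2-\mathbb{E}\big|\mathbb{E}_i[\delta_{i+1}]\big|^2\Big)+2\Delta t_i^2\,\mathbb{E}\Big[\int_{t_i}^{t_{i+1}}|f(\cdots)|^2dt\Big],
\end{equation*}
and likewise for $\bar{\hat L}$ and $\bar{\hat U}$. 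The dangerous $O(1)$ contribution is therefore exactly the \emph{variance} $\mathbb{E}|\delta_{i+1}|^2-\mathbb{E}|\mathbb{E}_i[\delta_{i+1}]|^2$, and choosing the Young parameter $\delta$ equal to the accumulated Lipschitz/$\gamma$ constant makes the coefficients recombine as $(1+\delta\Delta t)\,\mathbb{E}|\mathbb{E}_i[\delta_{i+1}]|^2+(1+K\Delta t)(\mathbb{E}|\delta_{i+1}|^2-\mathbb{E}|\mathbb{E}_i[\delta_{i+1}]|^2)\le(1+K\Delta t)\,\mathbb{E}|\delta_{i+1}|^2$, which is what Gronwall needs. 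Once Jensen has been applied early, the negative term $-\mathbb{E}|\mathbb{E}_i[\delta_{i+1}]|^2$ is gone and the $O(1)\cdot\mathbb{E}|\delta_{i+1}|^2$ contribution cannot be absorbed, so the recursion blows up like $(1+\mathrm{const})^N$.

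Two smaller points. First, the same variance terms reappear in the proof of Theorem \ref{theo-convergence1} when the $Z$-, $L$-, $U$-bounds are summed over $i$; there one telescopes $\sum_i(\mathbb{E}|\delta_{i+1}|^2-\mathbb{E}|\mathbb{E}_i[\delta_{i+1}]|^2)$ by relabeling indices, so retaining them in exact form matters beyond this lemma. Second, your treatment of the remaining slots (time and $X$ increments via \eqref{estiX} and Lemma \ref{WPeps}, the $Y$-slot via $\tilde{\mathcal V}_{t_i}-\hat{\mathcal V}_{t_i}$ and Lemma \ref{lem-errNN}, the $\gamma$-quadrature via Cauchy--Schwarz and $\mathcal R^2_\gamma(h)$ weighted against the uniform-in-$\epsilon$ bound on $\int|U^\epsilon|^2\nu$, and the terminal term $\mathbb{E}|\delta_N|^2\le C(1+|x|^2)\Delta t$) matches the paper and is fine.
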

\begin{proof}
(1) \ Let $i\in\{0,\dots,N-1\}$. Subtracting (\ref{FBSDE2}) and (\ref{shema2.1}), we find that 
	\begin{align*}
	& Y^\epsilon_{t_i}-\tilde{\mathcal{V}}_{t_i}=\mathbb{E}_i[Y^\epsilon_{t_{i+1}}-\tilde{\mathcal{V}}_{t_{i+1}}] +\mathbb{E}_i\integ{t_i}{t_{i+1}}\bigg[f\Big(t,X^\epsilon_t,Y^\epsilon_t,Z^\epsilon_t,\int_{E^\epsilon}U^\epsilon_t(e)\gamma(e)\nu(dz)+\zeta D_e\gamma(0)\Sigma^{\frac{1}{2}}_\epsilon L^\epsilon_t\Big)\nonumber\\
 &\ \ -f\Big(t_i,X^{\epsilon,\Delta t}_{t_i},\hat{\mathcal{Y}}_{t_{i}}(X^{\epsilon,\Delta t}_{t_i}),\hat{\mathcal{Z}}_{t_{i}}(X^{\epsilon,\Delta t}_{t_i}),\sum_{j\geq 1}\hat{\mathcal{U}}_{t_{i}}(X^{\epsilon,\Delta t}_{t_i},e_j)\gamma_j\nu(K_j)+\zeta D_e\gamma(0)\Sigma^\frac{1}{2}_\epsilon \hat{\mathcal{W}}_{i}(X^{\epsilon,\Delta t}_{t_i})\Big)\bigg]dt\nonumber.
	\end{align*}
	The Lipschitz assumption on $f$ and  Young's inequality, $(a+b)^2\leq (1+\delta\Delta t_i)a^2+(1+\displaystyle\frac{1}{\delta\Delta t_i})b^2$ for $\delta > 0$, then give
	\begin{align}
	|Y^\epsilon_{t_i}-\tilde{\mathcal{V}}_{t_i}|^2\leq&\;(1+\delta\Delta t_i)\big|\mathbb{E}_i[Y^\epsilon_{t_{i+1}}-\tilde{\mathcal{V}}_{t_{i+1}}]\big|^2+C_f^2\Big(1+\frac{1}{\delta\Delta t_i}\Big)\bigg|\integ{t_i}{t_{i+1}}|t_i-t|^{\frac{1}{2}}\,dt \nonumber\\&
	+\mathbb{E}_i\integ{t_i}{t_{i+1}}\big|X^\epsilon_t-X^{\epsilon,\Delta t}_{t_i}\big|\,dt+\mathbb{E}_i\integ{t_i}{t_{i+1}}\big|Y^\epsilon_t-\hat{\mathcal{Y}}_{t_{i}}(X^{\epsilon,\Delta t}_{t_i})\big|\,dt\nonumber\\ & + \mathbb{E}_i\integ{t_i}{t_{i+1}}\big|Z^\epsilon_t-\hat{\mathcal{Z}}_{t_{i}}(X^{\epsilon,\Delta t}_{t_i})\big|\,dt+\big|\zeta D_e\gamma(0)\Sigma^\frac{1}{2}_\epsilon\big|\;\mathbb{E}_i\integ{t_i}{t_{i+1}}\big|L^\epsilon_t-\hat{\mathcal{W}}_{t_{i}}(X^{\epsilon,\Delta t}_{t_i})\big|\,dt\nonumber \\ 
	&\hspace{3cm}  +\mathbb{E}_i\Big[\sum_{j\geq 1}\integ{t_i}{t_{i+1}}\int_{K_j}\big|U^\epsilon_{t}(e)\gamma(e)-\hat{\mathcal{U}}_{t_{i}}(X^{\epsilon,\Delta t}_{t_i},e_j)\gamma_j\big|\,\nu(de)\,dt\Big]\bigg|^2.\nonumber
	\end{align}
	Using $\Big(\displaystyle\sum_{i=1}^{6}a_i\Big)^2\leq 6\sum_{i=1}^{6}a^2_i$, \be Fubini, \ee Cauchy-Schwartz (for conditional expectations), and the strong error bounded for the Euler approximation (\ref{estiX}), we find that
	\begin{align}\label{Y-V1}
	\mathbb{E}|Y^\epsilon_{t_i}-\tilde{\mathcal{V}}_{t_i}|^2\leq&\,(1+\delta\Delta t_i)\mathbb{E}\big|\mathbb{E}_i[Y^\epsilon_{t_{i+1}}-\tilde{\mathcal{V}}_{t_{i+1}}]\big|^2+6C_f^2\Delta t_{i}\Big(1+\frac{1}{\delta\Delta t_i}\Big)\bigg[{\bs C(1+|x|^2)\es}\Delta t^2\\&+\integ{t_i}{t_{i+1}}\mathbb{E}\big|Y^\epsilon_t-\hat{\mathcal{Y}}_{t_{i}}(X^{\epsilon,\Delta t}_{t_i})\big|^2\,dt+\integ{t_i}{t_{i+1}}\mathbb{E}\big|Z^\epsilon_t-\hat{\mathcal{Z}}_{t_{i}}(X^{\epsilon,\Delta t}_{t_i})\big|^2\,dt\nonumber \\  
	& +\big|\zeta D_e\gamma(0)\Sigma^\frac{1}{2}_\epsilon\big|^2\integ{t_i}{t_{i+1}}\mathbb{E}\big|L^\epsilon_t-\hat{\mathcal{W}}_{t_{i}}(X^{\epsilon,\Delta t}_{t_i})\big|^2\,dt\nonumber\\  &\quad+\mathbb{E}\Big[\integ{t_i}{t_{i+1}}\Big|\sum_{j\geq 1}\int_{K_j}\big|U^\epsilon_{t}(e)\gamma(e)-\hat{\mathcal{U}}_{t_{i}}(X^{\epsilon,\Delta t}_{t_i},e_j)\gamma_j\big|\,\nu(de)\Big|^2\,dt\Big]\bigg].\nonumber
 	\end{align}
 	(2) \ We now estimate the $Y^\epsilon_t-\hat{\mathcal{Y}}_{t_{i}}$ term in \eqref{Y-V1}.
	By adding and subtracting terms and using mean square continuity  (\ref{MSC-Y}) to estimate  $Y^\epsilon_t-Y^\epsilon_{t_i}$, we find that
	\begin{align}\label{MSC-Y-V-1} &\mathbb{E}\integ{t_i}{t_{i+1}}\big|Y^\epsilon_t-\hat{\mathcal{Y}}_{t_{i}}(X^{\epsilon,\Delta t}_{t_i})\big|^2\,dt \\
	&\nonumber \leq C \be (1+|x|^2)\ee\Delta t^2+4\Delta t_i\,\mathbb{E}\big|Y^\epsilon_{t_i}-\tilde{\mathcal{V}}_{t_i}\big|^2+4\Delta t_i\,\mathbb{E}\big|\tilde{\mathcal{V}}_{t_i}-\hat{\mathcal{V}}_{t_i}\big|^2+4\Delta t_i\,\mathbb{E}\big|\hat{\mathcal{V}}_{t_i}-\hat{\mathcal{Y}}_{t_{i}}(X^{\epsilon,\Delta t}_{t_i})\big|^2,
	\end{align} 
	and from the definition of $\hat{\mathcal{V}}_{t_i}$, $\tilde{\mathcal{V}}_{t_i}$ in (\ref{NewBack}), (\ref{shema2.1}), Lipschitz continuity of $f$, and Cauchy-Scwartz,
	\begin{align}\nonumber%\label{MSC-Y-V-2}
	\mathbb{E}\big|\tilde{\mathcal{V}}_{t_i}-\hat{\mathcal{V}}_{t_i}\big|^2\nonumber &\leq \, \Delta t_i^2\,\mathbb{E}\, \Big|\mathbb{E}_i\Big[f\big(t_i,X^{\epsilon,\Delta t}_{t_i},\hat{\mathcal{Y}}_{t_{i}}(X^{\epsilon,\Delta t}_{t_i}),
	%\hat{\mathcal{Z}}_{t_{i}}(X^{\epsilon,\Delta t}_{t_i}),\sum_{j\geq 1}\hat{\mathcal{U}}_{t_{i}}(X^{\epsilon,\Delta t}_{t_i},e_j)\gamma_j\nu(K_j)\\&+\zeta  D_e\gamma(0)\Sigma^\frac{1}{2}_\epsilon \hat{\mathcal{W}}_{i}(X^{\epsilon,\Delta t}_{t_i}
	\cdots)\big)-f\big(t_i,X^{\epsilon,\Delta t}_{t_i},\hat{\mathcal{V}}_{t_i},
	%\bar{\hat{Z}}_{t_i},\displaystyle\sum_{j\geq 1}\bar{\hat{U}}_{t_i}(e_j)\gamma_j\nu(K_j)+\zeta  D_e\gamma(0)\Sigma^\frac{1}{2}_\epsilon\bar{\hat{L}}_{t_i}
	\cdots\big)\Big]\Big|^2\\
	&\leq 4C^2_f\Delta t_i^2\bigg(\mathbb{E}\big|\hat{\mathcal{Y}}_{t_{i}}(X^{\epsilon,\Delta t}_{t_i})-\hat{\mathcal{V}}_{t_i}\big|^2+\mathbb{E}|\hat{\mathcal{Z}}_{t_{i}}(X^{\epsilon,\Delta t}_{t_i})-\bar{\hat{Z}}_{t_i}|^2\label{MSC-Y-V-2}\\
	&\qquad\qquad\quad+\big|\zeta  D_e\gamma(0)\Sigma^\frac{1}{2}_\epsilon\big|^2\,\mathbb{E}\big|\hat{\mathcal{W}}_{t_{i}}(X^{\epsilon,\Delta t}_{t_i})-\bar{\hat{L}}_{t_i}\big|^2\nonumber\\
	&\qquad\qquad\quad+\sum_{j\geq 1}\gamma_j^2\,\nu(K_j)\,\mathbb{E}\Big[\sum_{j\geq 1}\big|\hat{\mathcal{U}}_{t_{l}}(X^{\epsilon,\Delta t}_{t_i},e_j)-\bar{\hat{U}}_{t_i}(e_j)\big|^2\nu(K_j)\Big]\bigg).\nonumber
	\end{align}
	(3) \ We estimate the $U^\epsilon_t-\hat{\mathcal{U}}_{t_{i}}$ term in \eqref{Y-V1} and find a new estimate for \eqref{Y-V1}. Adding and subtracting terms, using Cauchy-Schwartz, and the definition of $\mathcal{R}_\gamma$ in Theorem \ref{theo-convergence1},
	\begin{align}\label{errgamaa}
	& \mathbb{E}\Big[\integ{t_i}{t_{i+1}}\Big|\sum_{j\geq 1}\int_{K_j}\big|U^\epsilon_{t}(e)\gamma(e)-\hat{\mathcal{U}}_{t_{i}}(X^{\epsilon,\Delta t}_{t_i},e_j)\gamma_j\big|\nu(de)\Big|^2dt\Big]\\ 
	%=&\, \mathbb{E}\bigg[\integ{t_i}{t_{i+1}}\bigg|\sum_{j\geq 1}\int_{K_j}\big|U^\epsilon_{t}(e)\gamma(e)-U^\epsilon_{t}(e)\gamma_j+U^\epsilon_{t}(e)\gamma_j-\hat{\mathcal{U}}_{t_{i}}(X^{\epsilon,\Delta t}_{t_i},e_j)\gamma_j\big|\nu(de)\bigg|^2dt\bigg]\nonumber \\  
	&\leq 2\,\mathbb{E}\integ{t_i}{t_{i+1}}\Big[\Big|\sum_{j\geq 1}\int_{K_j}\big|U^\epsilon_{t}%(e)
	||\gamma%(e)
	-\gamma_j\big|\nu(de)\Big|^2
	%dt\Big]\nonumber\\ & +2\mathbb{E}\Big[\integ{t_i}{t_{i+1}}
	+\Big|\sum_{j\geq 1}\int_{K_j}\big|U^\epsilon_{t}%(e)
	-\hat{\mathcal{U}}_{t_{i}}%(X^{\epsilon,\Delta t}_{t_i},e_j)
	\big|\gamma_j\nu(de)\Big|^2\Big] dt\nonumber\\   
	&\leq
	2\mathcal{R}^2_\gamma(h)\,\mathbb{E}\Big[\integ{t_i}{t_{i+1}}\int_{E^\epsilon}\big|U^\epsilon_{t}%(e)
	\big|^2\nu(de)dt\Big]
	%\nonumber\\&
	+2\,\mathbb{E}\Big[\integ{t_i}{t_{i+1}}\Big|\sum_{j\geq 1}\int_{K_j}\big|U^\epsilon_{t}%(e)
	-\hat{\mathcal{U}}_{t_{i}}%(X^{\epsilon,\Delta t}_{t_i},e_j)
	\big|\gamma_j\nu(de)\Big |^2dt\Big].\nonumber
	\end{align}
	Then, by plugging (\ref{MSC-Y-V-1}), (\ref{MSC-Y-V-2}) and (\ref{errgamaa}) into (\ref{Y-V1}), we obtain
	\begin{align}\label{Y-V}
	&\mathbb{E}|Y^\epsilon_{t_i}-\tilde{\mathcal{V}}_{t_i}|^2 \leq(1+\delta\Delta t_i)\,\mathbb{E}\big|\mathbb{E}_i[Y^\epsilon_{t_{i+1}}-\tilde{\mathcal{V}}_{t_{i+1}}]\big|^2  +6C_f^2\Delta t_{i}\Big(1+\frac{1}{\delta\Delta t_i}\Big)\bigg[C
	\be(1+|x|^2)\ee\Delta t_{i}^2\\  
	&\quad +4\Delta t_i\,\mathbb{E}|Y^\epsilon_{t_i}-\tilde{\mathcal{V}}_{t_i}|^2 + \mathbb{E}\integ{t_i}{t_{i+1}}\big|Z^\epsilon_t-\hat{\mathcal{Z}}_{t_{i}}%(X^{\epsilon,\Delta t}_{t_i})
	\big|^2dt
	%\nonumber\\&
	\nonumber \\ 
	&\quad +\big|\zeta D_e\gamma(0)\Sigma^\frac{1}{2}_\epsilon\big|^2\,\mathbb{E}\!\integ{t_i}{t_{i+1}}\big|L^\epsilon_t-\hat{\mathcal{W}}_{t_{i}}%(X^{\epsilon,\Delta t}_{t_i})
	\big|^2dt+2\,\mathbb{E}\integ{t_i}{t_{i+1}}\Big| \sum_{j\geq 1}\int_{K_j}\big|U^\epsilon_{t}%(e)
	-\hat{\mathcal{U}}_{t_{i}}%(X^{\epsilon,\Delta t}_{t_i},e_j)
	\big|\gamma_j\nu(de)\Big|^2dt\nonumber\\
	&\quad +2\mathcal{R}^2_\gamma(h)\,\mathbb{E}\integ{t_i}{t_{i+1}}\int_{E^\epsilon}|U^\epsilon_{t}|^2\nu(de)dt+4\Delta t_i\,\mathbb{E}\big|\hat{\mathcal{V}}_{t_i}-\hat{\mathcal{Y}}_{t_{i}}%(X^{\epsilon,\Delta t}_{t_i})
	\big|^2\nonumber\\ 
	&\quad +16C^2_f\Delta t_i^3\bigg(\mathbb{E}\big|\hat{\mathcal{Y}}_{t_{i}}%(X^{\epsilon,\Delta t}_{t_i})
	-\hat{\mathcal{V}}_{t_i}\big|^2+\mathbb{E}\big|\hat{\mathcal{Z}}_{t_{i}}%(X^{\epsilon,\Delta t}_{t_i})
	-\bar{\hat{Z}}_{t_i}\big|^2+\big|\zeta  D_e\gamma(0)\Sigma^\frac{1}{2}_\epsilon\big|^2\mathbb{E}\big|\hat{\mathcal{W}}_{t_{i}}%(X^{\epsilon,\Delta t}_{t_i})
	-\bar{\hat{L}}_{t_i}\big|^2\nonumber\\
	&\hspace{4.5cm}+\sum_{j\geq 1}\gamma_j^2\,\nu(K_j)\,\mathbb{E}\Big[\sum_{j\geq 1}\big|\hat{\mathcal{U}}_{t_{l}}(X^{\epsilon,\Delta t}_{t_i},e_j)
	-\bar{\hat{U}}_{t_i}(e_j)\big|^2\nu(K_j)\Big]\bigg)\bigg]\nonumber.
	\end{align}
(4) \ We estimate the $Z^\epsilon_t-\hat{\mathcal{Z}}_{t_{i}}$  and $L^\epsilon_t-\hat{\mathcal{W}}_{t_{i}}$ terms in \eqref{Y-V}. By adding and subtracting terms,
\begin{equation}
	\begin{split}\label{est-of-Z} &\mathbb{E}\integ{t_i}{t_{i+1}}\big|Z^\epsilon_t-\hat{\mathcal{Z}}_{t_{i}}(X^{\epsilon,\Delta t}_{t_i})\big|^2dt\\
	&\leq3\,\mathbb{E}\integ{t_i}{t_{i+1}}\big|Z^\epsilon_t-\pi_ {[t_i,t_{i+1}]}(Z^\epsilon)\big|^2dt+3\Delta t_{i}\,\mathbb{E}\big|\pi_ {[t_i,t_{i+1}]}(Z^\epsilon)-\bar{\hat{Z}}_{t_i}\big|^2+3\Delta t_{i}\,\mathbb{E}\big|\bar{\hat{Z}}_{t_i}-\hat{\mathcal{Z}}_{t_{i}}%(X^{\epsilon,\Delta t}_{t_i})
	\big|^2.
	\end{split}
\end{equation}
	\be By Corollary \ref{cor:proj} \be and the $L^2$ inner product version of the It\^{o} isometry,
 $\Delta t_i\,\pi_ {[t_i,t_{i+1}]}(Z^\epsilon)=\mathbb E_i\,\big[\Delta W_{t_i}\int_{t_i}^{t_{i+1}}Z^\epsilon_t\,dW_t\big].$ Using the BSDE (\ref{FBSDE2}) to express the $Z$-integral, independence of increments ($\mathbb E_i\,[Y^\epsilon_{t_{i}}\,\Delta W_{t_i}]=0=\mathbb E_i\,\big[\int_{t_i}^{t_{i+1}}L^\epsilon_r d\widetilde W_r\, \Delta W_i\big]$ etc.), and definition \ee (\ref{tilda projection}) for $\bar{\hat{Z}}_{t_i}$, we obtain
	\begin{align*}
	 &\Delta t_{i}\big(
	\pi_ {[t_i,t_{i+1}]}(Z^\epsilon)-\bar{\hat{Z}}_{t_i}\big)= \mathbb{E}_i\big[\Delta W_{t_i}(Y^\epsilon_{t_{i+1}}-\tilde{\mathcal{V}}_{t_{i+1}})\big]\nonumber\\
 &+\mathbb{E}_i\Big[\Delta W_{t_i}\integ{t_i}{t_{i+1}}f\Big(t,X^\epsilon_t,Y^\epsilon_t,Z^\epsilon_t,\int_{E^\epsilon}U^\epsilon_t(e)\gamma(e)\nu(de){\bs  +\zeta D_e\gamma(0)\Sigma^{\frac{1}{2}}_\epsilon L^\epsilon_t\es}\Big)dt\Big].\nonumber
	\end{align*}
	Inside the first expectation we can subtract the term $\Delta W_{t_i}\mathbb{E}_i[Y^\epsilon_{t_{i+1}}-\tilde{\mathcal{V}}_{t_{i+1}}]$ since its $\mathbb E_i$ expectation is zero. Then by
	Cauchy-Schwartz first for the conditional expectations, and then for the time integral (with integrand $1\cdot f$), followed by the relation $\mathbb E_i|\Delta W_{t_i}|^2=\Delta t_i$,
	\begin{align}\label{est1-2-of-Z}
	&\Delta t_{i}^2\,\mathbb{E}\big|\pi_ {[t_i,t_{i+1}]}(Z^\epsilon)-\bar{\hat{Z}}_{t_i}\big|^2\leq2\Delta t_i\,\mathbb{E}\big|Y^\epsilon_{t_{i+1}}-\tilde{\mathcal{V}}_{t_{i+1}}-\mathbb{E}_i[Y^\epsilon_{t_{i+1}}-\tilde{\mathcal{V}}_{t_{i+1}}]\big|^2 \\ 
	& \qquad+2\Delta t_i^2\,\mathbb{E}\Big[\integ{t_i}{t_{i+1}}\Big|f\Big(t,X^\epsilon_t,Y^\epsilon_t,Z^\epsilon_t,\int_{E^\epsilon}U^\epsilon_t%(e)
	\gamma%(e)
	\nu(de)+\zeta D_e\gamma(0)\Sigma^{\frac{1}{2}}_\epsilon L^\epsilon_t\Big)\Big|^2dt\Big].\qquad\qquad\nonumber
	\end{align}
	Expanding the square and using the law of iterated conditional expectations lead to
	\begin{align} &\mathbb{E}\big|Y^\epsilon_{t_{i+1}}-\tilde{\mathcal{V}}_{t_{i+1}}-\mathbb{E}_i[Y^\epsilon_{t_{i+1}}-\tilde{\mathcal{V}}_{t_{i+1}}]\big|^2\label{est2-of-Z}\\
	&=\, \mathbb{E}\big|Y^\epsilon_{t_{i+1}}-\tilde{\mathcal{V}}_{t_{i+1}}\big|^2+\mathbb{E}\big|\mathbb{E}_i[Y^\epsilon_{t_{i+1}}-\tilde{\mathcal{V}}_{t_{i+1}}]\big|^2 -2\mathbb{E}\Big[\mathbb{E}_i\Big[\big(Y^\epsilon_{t_{i+1}}-\tilde{\mathcal{V}}_{t_{i+1}}\big)\mathbb{E}_i[Y^\epsilon_{t_{i+1}}-\tilde{\mathcal{V}}_{t_{i+1}}]\Big]\Big]\nonumber \\
	&=\, \mathbb{E}\big|Y^\epsilon_{t_{i+1}}-\tilde{\mathcal{V}}_{t_{i+1}}\big|^2-\mathbb{E}\big|\mathbb{E}_i[Y^\epsilon_{t_{i+1}}-\tilde{\mathcal{V}}_{t_{i+1}}]\big|^2.\hspace{4cm}\nonumber
\end{align}
The same arguments will lead to similar estimates for the $L^\epsilon_t-\hat{\mathcal{W}}_{t_{i}}$ term in \eqref{Y-V1}. In particular,
	\begin{align}\label{est2-of-L}
&\Delta t_i^2\,\mathbb{E}\big|\pi_ {[t_i,t_{i+1}]}(L^\epsilon)-\bar{\hat{L}}_{t_i}\big|^2\leq2\Delta t_i\Big(\mathbb{E}\big|Y^\epsilon_{t_{i+1}}-\tilde{\mathcal{V}}_{t_{i+1}}\big|^2-\mathbb{E}\big|\mathbb{E}_i[Y^\epsilon_{t_{i+1}}-\tilde{\mathcal{V}}_{t_{i+1}}]\big|^2\Big) \\ 
& \qquad +2\Delta t_i^2\,\mathbb{E}\Big[\integ{t_i}{t_{i+1}}\Big|f\Big(t,X^\epsilon_t,Y^\epsilon_t,Z^\epsilon_t,\int_{E^\epsilon}U^\epsilon_t(e)\gamma(e)\nu(de)+\zeta D_e\gamma(0)\Sigma^{\frac{1}{2}}_\epsilon L^\epsilon_t\Big)\Big|^2dt\Big].\nonumber
	\end{align}
(5) \ We estimate the $U^\epsilon_{t}(e)-\hat{\mathcal{U}}_{t_{i}}$  term in \eqref{Y-V}. Adding and subtracting terms and forming approximate $L^2(\nu)$ integrals using Cauchy-Schwarz, $(\int_{K_j}[\cdots]\gamma_j\,\nu(de))^2\leq \int_{K_j}\gamma_j^2\,\nu(de) \int_{K_j}[\cdots]^2\,\nu(de)$, we find that
	\begin{equation}\label{est-of-U}
	\begin{split}	&\mathbb{E}\Big[\displaystyle\integ{t_i}{t_{i+1}}\bigg|\sum_{j\geq 1}\int_{K_j}\big|U^\epsilon_{t}(e)-\hat{\mathcal{U}}_{t_{i}}(X^{\epsilon,\Delta t}_{t_i},e_j)\big|\gamma_j\nu(de)\bigg|^2dt\Big]\qquad\qquad\qquad\\
	&\leq3\ \displaystyle\sum_{j\geq 1}\gamma_j^2\nu(K_j)\ \mathbb{E}\Big[\displaystyle\integ{t_i}{t_{i+1}}\sum_{j\geq 1}\int_{K_j}\big|U^\epsilon_{t}(e)-\pi_ {[t_i,t_{i+1}],K_j}(U^\epsilon)\big|^2\nu(de)dt\Big]\\ 
	&\quad+3\ \Delta t_i\,\mathbb{E}\Big|\displaystyle\sum_{j\geq 1}\big|\pi_ {[t_i,t_{i+1}],K_j}(U^\epsilon)-\bar{\hat{U}}_{t_i}(e_j)\big|\gamma_j\nu(K_j)\Big|^2\\ 
	&\quad+3\ \Delta t_i\sum_{j\geq 1}\gamma_j^2\nu(K_j)\ \mathbb{E}\Big[\displaystyle\sum_{j\geq 1}\big|\bar{\hat{U}}_{t_i}(e_j)-\hat{\mathcal{U}}_{t_{i}}(X^{\epsilon,\Delta t}_{t_i},e_j)\big|^2\nu(K_j)\Big].
	\end{split}
	\end{equation}
	\be In view of Corollary \ref{cor:proj} and the $L^2$ inner product version of the Levy-It\^{o} isometry: 
 	\begin{align*} 
  &\Delta t_i\,\nu(K_j)\,\pi_ {[t_i,t_{i+1}],K_j}(U^\epsilon)=\mathbb E_i \int_{t_i}^{t_{i+1}}\!\!\!\int_{K_j} 1\cdot U^\epsilon\,\nu(de)\,dt\\
	&=\mathbb E_i\Big[\int_{t_i}^{t_{i+1}}\!\!\!\int_{K_j} 1\,\tilde\mu(de,dt)\int_{t_i}^{t_{i+1}}\!\!\!\int_{K_j} U^\epsilon\,\tilde\mu(de,dt)\Big]=\mathbb E_i\,\Big[\tilde{\mu}(K_j,[t_i,t_{i+1}))\int_{t_i}^{t_{i+1}}\!\!\!\int_{K_j} U^\epsilon\,\tilde\mu(de,dt)\Big].
	\end{align*} 
 Using the BSDE (\ref{FBSDE2}) to express the $U$-integral, independence of increments 
 $$\mathbb E_i\,\Big[\tilde{\mu}(K_j,[t_i,t_{i+1}))\,Y^\epsilon_{t_{i}}\Big]=0=\mathbb E_i\,\Big[\tilde{\mu}(K_j,[t_i,t_{i+1}))\int_{t_i}^{t_{i+1}}L^\epsilon_r d\widetilde W_r\Big]$$ 
 etc., and definition (\ref{tilda projection}) for $\bar{\hat{U}}_{t_i}$, we obtain \ee
 % , we can find an expression for $\pi_ {[t_i,t_{i+1}],K_j}(U^\epsilon)-\bar{\hat{U}}_{t_i}(e_j)$ by multiplying BSDE (\ref{FBSDE2}) by $\tilde{\mu}(K_j,[t_i,t_{i+1}))$, taking conditional expectation $\mathbb E_i$, using independence of increments. To handle the $\tilde\mu$-integral and see the $\pi_ {[t_i,t_{i+1}],K_j}$-projection, we use 
	% Combing all of this with  (\ref{tilda projection}) for $\bar{\hat{U}}_{t_i}$, we obtain 
	\begin{align}\label{U-esti-1}
	& \Delta t_i\,\nu(K_j)\Big(\pi_ {[t_i,t_{i+1}],K_j}(U^\epsilon)-\bar{\hat{U}}_{t_i}(e_j)\Big)\nonumber\\
	&= \mathbb{E}_i\Big[ \tilde{\mu}(K_j,[t_i,t_{i+1}))\,(Y^\epsilon_{t_{i+1}}-\tilde{\mathcal{V}}_{t_{i+1}})\Big]\\
	&\quad +\mathbb{E}_i\Big[\tilde{\mu}(K_j,[t_i,t_{i+1}))\integ{t_i}{t_{i+1}}f\Big(t,X^\epsilon_t,Y^\epsilon_t,Z^\epsilon_t,\int_{E^\epsilon}U^\epsilon_t(e)\gamma(e)\nu(de)+\zeta D_e\gamma(0)\Sigma^{\frac{1}{2}}_\epsilon L^\epsilon_t\Big)dt\Big]\nonumber.
	\end{align}
	Again we may subtract the term $\mathbb{E}_i[Y^\epsilon_{t_{i+1}}-\tilde{\mathcal{V}}_{t_{i+1}}]$ inside the first expectation and treat this term as in part (5).
	Taking absolute values, multiplying by $\gamma_j$, and summing over $j$, followed by squaring and using Cauchy-Schwartz first for the  conditional expectations, and then for the time integral (with integrand $1\cdot f$), %Young's inequality and IIt\^{o} isometry, 
	we obtain
	\begin{align*}
	& \Delta t_i^2\,\mathbb{E}\,\Big|\displaystyle\sum_{j\geq 1}\big|\pi_ {[t_i,t_{i+1}],K_j}(U^\epsilon)-\bar{\hat{U}}_{t_i}(e_j)\big|\gamma_j\nu(K_j)\Big|^2\nonumber\\
	&\leq 2\,\mathbb{E}\Big|\displaystyle\sum_{j\geq 1}\gamma_j\tilde{\mu}(K_j,[t_i,t_{i+1}))\Big|^2\Big(\mathbb{E}\big|Y^\epsilon_{t_{i+1}}-\tilde{\mathcal{V}}_{t_{i+1}}\big|^2-\mathbb{E}\big|\mathbb{E}_i[Y^\epsilon_{t_{i+1}}-\tilde{\mathcal{V}}_{t_{i+1}}]\big|^2\Big)\\ 
	& \quad+2\,\mathbb{E}\,\Big|\sum_{j\geq 1}\gamma_j\tilde{\mu}(K_j,[t_i,t_{i+1}))\Big|^2\cdot \,\Delta t_i\,\mathbb{E}\Big[\integ{t_i}{t_{i+1}}\Big|f\Big(t,X^\epsilon_t,Y^\epsilon_t,Z^\epsilon_t,\cdots
	%\int_{E^\epsilon}U^\epsilon_t(e)\gamma(e)\nu(de)+\zeta D_e\gamma(0)\Sigma^{\frac{1}{2}}_\epsilon L^\epsilon_t
	\Big)\Big|^2dt\Big].
	\end{align*}
	Using the Levy-It\^{o} isometry, $\mathbb{E}\Big|\displaystyle\sum_{j\geq 1}\gamma_j\tilde{\mu}(K_j,[t_i,t_{i+1}))\Big|^2=\displaystyle\sum_{j\geq 1}\gamma_j^2\,\nu(K_j)\Delta t_i$, we then find that
	\begin{equation}
	\begin{split}
	\label{esti2-of-U}
	&\Delta t_i\,\mathbb{E}\,\Big|\displaystyle\sum_{j\geq 1}\big|\pi_ {[t_i,t_{i+1}],K_j}(U^\epsilon)-\bar{\hat{U}}_{t_i}(e_j)\big|\gamma_j\nu(K_j)\Big|^2\\ 
	&\leq 2\,\sum_{j\geq 1}\gamma_j^2\,\nu(K_j)\ \Big(\mathbb{E}\big|Y^\epsilon_{t_{i+1}}-\tilde{\mathcal{V}}_{t_{i+1}}\big|^2-\mathbb{E}\big|\mathbb{E}_i[Y^\epsilon_{t_{i+1}}-\tilde{\mathcal{V}}_{t_{i+1}}]\big|^2\Big)\\ 
	& \quad+2\Delta t_i\,\sum_{j\geq 1}\gamma_j^2\,\nu(K_j)\ \mathbb{E}\Big[\integ{t_i}{t_{i+1}}\Big|f\Big(t,X^\epsilon_t,Y^\epsilon_t,
	Z^\epsilon_t,\cdots %\int_{E^\epsilon}U^\epsilon_t(e)\gamma(e)\,\nu(de)+\zeta D_e\gamma(0)\Sigma^{\frac{1}{2}}_\epsilon L^\epsilon_t
\Big)\Big|^2dt\Big].
	\end{split}
	\end{equation}
	(6) \ The conclusion. We now plug (\ref{est-of-Z}) --  (\ref{esti2-of-U}) into (\ref{Y-V}) with a suitable choice of $\delta$ to find that there is a constant $K$ such that
\begin{align*}
	\mathbb{E}\big|Y^\epsilon_{t_i}-\tilde{\mathcal{V}}_{t_i}\big|^2%\nonumber\\ &
	\leq &\ (1+\delta\Delta t)\,\mathbb{E}\big|\mathbb{E}_i[Y^\epsilon_{t_{i+1}}-\tilde{\mathcal{V}}_{t_{i+1}}]\big|^2\\
	&+K\Big(\Delta t+\frac1\delta\Big)\Big(\mathbb{E}\big|Y^\epsilon_{t_{i+1}}-\tilde{\mathcal{V}}_{t_{i+1}}\big|^2-\mathbb{E}\big|\mathbb{E}_i[Y^\epsilon_{t_{i+1}}-\tilde{\mathcal{V}}_{t_{i+1}}]\big|^2\Big) +[\text{other terms}]\\
	 \leq  & \ (1+K\Delta t)\,\mathbb{E}\big|Y^\epsilon_{t_{i+1}}-\tilde{\mathcal{V}}_{t_{i+1}}\big|^2+[\text{other terms}] \qquad \text{when}\qquad \delta\leq K. 
\end{align*}
Taking $\delta = K= 36 C_f^2\big(1+|\zeta D_e\gamma(0)\Sigma^\frac{1}{2}_\epsilon|^2+\sum_{j\geq 1}\gamma_j^2\,\nu(K_j)\big)$ is sufficient, and leads to
	\begin{align*}
	& \mathbb{E}\big|Y^\epsilon_{t_i}-\tilde{\mathcal{V}}_{t_i}\big|^2%\nonumber\\ &
	\leq (1+K\Delta t)\,\mathbb{E}\big|Y^\epsilon_{t_{i+1}}-\tilde{\mathcal{V}}_{t_{i+1}}\big|^2+C\be(1+|x|^2)\ee\Delta t^2+C\Delta t\,\mathbb{E}\big|Y^\epsilon_{t_{i}}-\tilde{\mathcal{V}}_{t_i}\big|^2\\
	&\quad+C\,\mathbb{E}\Big[\integ{t_i}{t_{i+1}}\big|Z^\epsilon_t-\pi_ {[t_i,t_{i+1}]}(Z^\epsilon)\big|^2dt\Big]+C\zeta\,\mathbb{E}\Big[\integ{t_i}{t_{i+1}}\big|L^\epsilon_t-\pi_ {[t_i,t_{i+1}]}(L^\epsilon)\big|^2dt\Big]\nonumber\\
	&\quad 
	+C\,\mathbb{E}\Big[\integ{t_i}{t_{i+1}}\sum_{j\geq 1}\int_{K_j}\big|U^\epsilon_{t}
	%(e)
	-\pi_ {[t_i,t_{i+1}],K_j}(U^\epsilon)\big|^2\nu(de)dt\Big]\nonumber\\
	&\quad+C\mathcal{R}^2_\gamma(h)\,\mathbb{E}\Big[\integ{t_i}{t_{i+1}}\int_{E^\epsilon}\big|U^\epsilon_{t}
	%(e)
	\big|^2\nu(de)dt\Big]+C\Delta t\,\mathbb{E}\big|\hat{\mathcal{V}}_{t_i}-\hat{\mathcal{Y}}_{t_{i}}
	%(X^{\epsilon,\Delta t}_{t_i})
	\big|^2\nonumber\\
	&\quad+C\Delta t\,\Big(\mathbb{E}\big|\bar{\hat{Z}}_{t_i}-\hat{\mathcal{Z}}_{t_{i}}%(X^{\epsilon,\Delta t}_{t_i})
	\big|^2+\zeta\,\mathbb{E}\big|\bar{\hat{L}}_{t_i}-\hat{\mathcal{W}}_{t_{i}}%(X^{\epsilon,\Delta t}_{t_i})
	\big|^2%\nonumber\\& 
	+\mathbb{E}\Big[\sum_{j\geq 1}\big|\bar{\hat{U}}_{t_i}(e_j)-\hat{\mathcal{U}}_{t_{i}}(X^{\epsilon,\Delta t}_{t_i},e_j)\big|^2\nu(K_j)\Big]\Big)\nonumber \\ 
	&
	\quad+C\Delta t\,\mathbb{E}\Big[\integ{t_i}{t_{i+1}}\Big|f\Big(t,X^\epsilon_t,Y^\epsilon_t,Z^\epsilon_t,\int_{E^\epsilon}U^\epsilon_t(e)\gamma(e)\nu(de)+\zeta D_e\gamma(0)\Sigma^{\frac{1}{2}}_\epsilon L^\epsilon_t\Big)\Big|^2dt\Big]\nonumber.
	\end{align*}
	From the linear growth condition on $f$ from (H1) and the second moment bounds on $(X^\epsilon_t,Y^\epsilon_t,Z^\epsilon_t,U^\epsilon_t)$ in Theorem \ref{WP_FBSDE} (which are clearly independent of $\epsilon$, see e.g. \be Lemma \ref{WPeps}\ee), we also see that
	\begin{equation}\label{condi-f}
	\sup_{\epsilon\in(0,1)}\mathbb{E}\Big[\integ{0}{T}\Big|f\Big(t,X^\epsilon_t,Y^\epsilon_t,Z^\epsilon_t,\int_{E^\epsilon}U^\epsilon_t(e)\gamma(e)\nu(de)+\zeta D_e\gamma(0)\Sigma^{\frac{1}{2}}_\epsilon L^\epsilon_s\Big)\Big|^2
	%+\int_{E^\epsilon}\big|U^\epsilon_{t}
	%(e)\big|^2\nu(de)
	dt\Big]<\infty.
	\end{equation}
	Then by Lemma \ref{lem-errNN} and the definitions of the $\mathcal{R}^2_\cdot$ and $\epsilon_i^\cdot$ terms (see statement of Theorem \ref{theo-convergence1}), 
	\begin{align*}
    \mathbb{E}\big|Y^\epsilon_{t_i}-\tilde{\mathcal{V}}_{t_i}\big|^2%\nonumber\\ &
	\leq & \ (1+K\Delta t)\,\mathbb{E}\big|Y^\epsilon_{t_{i+1}}-\tilde{\mathcal{V}}_{t_{i+1}}\big|^2\\
	&+C\Delta t\Big(\be(1+|x|^2)\ee\Delta t
	+\mathcal{R}^2_\gamma(h)+\mathcal{R}^2_{Z^\epsilon}(\Delta t)+ \zeta\mathcal{R}^2_{L^\epsilon}(\Delta t)+\mathcal{R}^2_{U^\epsilon}(\Delta t,h)\Big)
\nonumber\\ & +C\Delta t\sum_{i=0}^{N-1}\Big(\epsilon_i^{\mathcal{N},v}+\Delta t_i\epsilon_i^{\mathcal{N},z}+\zeta\Delta t_i\epsilon_i^{\mathcal{N},l}+\Delta t_i\epsilon_i^{\mathcal{N},\rho}\Big).
\end{align*}
	By the discrete Gr\"onwall Lemma and since $g$ is Lipschitz,  we then conclude that \eqref{Y-estim-rate} holds and the proof \be of Lemma \ref{lem-tilda-V} \ee is complete.
\end{proof}

\begin{proof} [Proof of Theorem \ref{theo-convergence1}] The error bound for the  $Y$-terms follows as a direct consequence of Lemma \ref{lem-errNN} and \ref{lem-tilda-V}. The remaining bounds follows by Lemma \ref{lem-errNN} and computations similar to those in the proof of Lemma \ref{lem-tilda-V}. \be Since $|Z^\epsilon_t-\hat{\mathcal{Z}_i}(X^{\epsilon,\Delta t}_{t_i})|\leq |Z^\epsilon_t-\bar{\hat{Z}}_{t_i}|+|\bar{\hat{Z}}_{t_i}-\hat{\mathcal{Z}_i}(X^{\epsilon,\Delta t}_{t_i})|$ and similar for $L$ and $U$, and the second terms are controlled by Lemma \ref{lem-errNN}, to finish the proof we must estimate first terms,
\begin{align*}%\label{estim-Z-L-U-1}
	I := &\ \mathbb{E}\bigg[\displaystyle\sum_{i=0}^{N-1}\integ{t_i}{t_{i+1}}|Z^\epsilon_t-\bar{\hat{Z}}_{t_i}|^2dt\bigg]+\zeta\,\mathbb{E}\bigg[\displaystyle\sum_{i=0}^{N-1}\integ{t_i}{t_{i+1}}|L^\epsilon_t-\bar{\hat{L}}_{t_i}|^2dt\bigg]\\
&+\mathbb{E}\bigg[\displaystyle\sum_{i=0}^{N-1}\integ{t_i}{t_{i+1}}\Big|\displaystyle\sum_{j\geq 1}\int_{K_j}|U^\epsilon_{t}(e)-\bar{\hat{U}}_{t_i}(e_j)|\gamma_j\nu(de)\Big|^2dt\bigg]. \nonumber 
\end{align*}
By adding and subtracting projections ($Z^\epsilon_t-\bar{\hat{Z}}_{t_i}=(Z^\epsilon_t-\pi_ {[t_i,t_{i+1}]}(Z^\epsilon))+(\pi_ {[t_i,t_{i+1}]}(Z^\epsilon)-\bar{\hat{Z}}_{t_i})$ etc.), the definition of $\mathcal{R}^2$, estimates
	(\ref{est1-2-of-Z}), (\ref{est2-of-L}), (\ref{esti2-of-U}), and summing over $i$,
\begin{align}\label{estim-Z-L-U-1}
	I \leq &\ C\big(\mathcal{R}^2_{Z^\epsilon}(\Delta t)+\zeta\mathcal{R}^2_{L^\epsilon}(\Delta t)+\mathcal{R}^2_{U^\epsilon}(\Delta t,h)\big)\\
 &+4\Big(1+\zeta+\displaystyle\sum_{j\geq 1}\gamma_j^2\nu(K_j)\Big)\displaystyle\sum_{i=0}^{N-1}\Big(\mathbb{E}|Y^\epsilon_{t_{i+1}}-\tilde{\mathcal{V}}_{i+1}|^2-\mathbb{E}\big|\,\mathbb{E}_i[Y^\epsilon_{t_{i+1}}-\tilde{\mathcal{V}}_{i+1}]\big|^2\Big).\nonumber\\
 &+4\Delta t\Big(1+\zeta+\displaystyle\sum_{j\geq 1}\gamma_j^2\nu(K_j)\Big)\,\nonumber\\ 
	& \quad\cdot\mathbb{E}\bigg[\displaystyle\sum_{i=0}^{N-1}\integ{t_i}{t_{i+1}}\Big|f\Big(t,X^\epsilon_t,Y^\epsilon_t,Z^\epsilon_t,\int_{E^\epsilon}U^\epsilon_t(e)\gamma(e)\nu(de)+\zeta D_e\gamma(0)\Sigma^{\frac{1}{2}}_\epsilon L^\epsilon_t\Big)\Big|^2dt\bigg]\nonumber\\
 =&\ I_1+I_2+I_3.\nonumber
	\end{align}
We first estimate $I_3$. Since $f$ is Lipschitz (H2), $(X,Y,U,Z)$ belong to $L^2$ (Lemma \ref{WPeps} and \ref{u=Y}), and $\sum_{j\geq 1}\gamma_j^2\nu(K_j)\leq C\int |\gamma(e)|^2 \nu(de)<\infty$ by (D2) and (H2) (ii),
$$I_3\leq C(1+|x|^2)\Delta t.$$
We now estimate $I_2$. By relabeling indices for the first term in the sum below, \ee we have 
	\begin{align}\label{chang-indi-equa}
	  & \displaystyle\sum_{i=0}^{N-1}\Big(\mathbb{E}|Y^\epsilon_{t_{i+1}}-\tilde{\mathcal{V}}_{i+1}|^2-\mathbb{E}\big|\,\mathbb{E}_i[Y^\epsilon_{t_{i+1}}-\tilde{\mathcal{V}}_{i+1}]\big|^2\Big)\\&\be\leq\ee\mathbb{E}|g(X^\epsilon_T)-g(X^{\epsilon,\Delta t}_T)|^2+\displaystyle\sum_{i=0}^{N-1}\Big(\mathbb{E}|Y^\epsilon_{t_{i}}-\tilde{\mathcal{V}}_{t_i}|^2-\mathbb{E}\big|\,\mathbb{E}_i[Y^\epsilon_{t_{i+1}}-\tilde{\mathcal{V}}_{i+1}]\big|^2\Big)=I_{21}+I_{22}.\nonumber
	\end{align}
\be By (H2)(iii) and Lemma \ref{estiX}, $I_{21}\leq C(1+|x|^2)\Delta t$. To estimate $I_{22}$ we use (\ref{Y-V}) and 
add and subtract \be $(\bar{\hat{Z}},\bar{\hat{L}},\bar{\hat{U}})$,\ee 
\begin{align}
	&\mathbb{E}|Y^\epsilon_{t_{i}}-\tilde{\mathcal{V}}_{t_i}|^2-\mathbb{E}\big|\,\mathbb{E}_i[Y^\epsilon_{t_{i+1}}-\tilde{\mathcal{V}}_{i+1}]\big|^2\\
 & \leq \be\delta\ee\Delta t\,\mathbb{E}\big|\,\mathbb{E}_i[Y^\epsilon_{t_{i+1}}-\tilde{\mathcal{V}}_{i+1}]\big|^2+6C_f^2\Delta t_{i}\big(1+\frac{1}{\delta\Delta t_i}\big)\cdot\nonumber\\
 & \quad \cdot\bigg[{\bs C(1+|x|^2)\es}
	\Delta t_{i}^2 +4\Delta t_i\,\mathbb{E}|Y^\epsilon_{t_i}-\tilde{\mathcal{V}}_{t_i}|^2+ 2\mathbb{E}\integ{t_i}{t_{i+1}}|Z^\epsilon_t-\bar{\hat{Z}}_{t_{i}}|^2dt+2\Delta t _i \,\mathbb{E}|\bar{\hat{Z}}_{t_{i}}-\hat{\mathcal{Z}}_{t_{i}}%(X^{\epsilon,\Delta t}_{t_i})
	|^2\nonumber\\ 
 &\qquad+2\zeta|  D_e\gamma(0)\Sigma^\frac{1}{2}_\epsilon|^2\,\mathbb{E}\integ{t_i}{t_{i+1}}|L^\epsilon_t-\bar{\hat{L}}_{t_{i}}|^2dt+2\zeta|  D_e\gamma(0)\Sigma^\frac{1}{2}_\epsilon|^2\Delta t _i \,\mathbb{E}|\bar{\hat{L}}_{t_{i}}-\hat{\mathcal{W}}_{t_{i}}%(X^{\epsilon,\Delta t}_{t_i})
	|^2\nonumber\\ & \qquad+2\mathcal{R}^2_\gamma(h)\,\mathbb{E}\integ{t_i}{t_{i+1}}\int_{E^\epsilon}|U^\epsilon_{t}(e)|^2\nu(de)dt+4\Delta t_i\,\mathbb{E}|\hat{\mathcal{V}}_{t_i}-\hat{\mathcal{Y}}_{t_{i}}(X^{\epsilon,\Delta t}_{t_i})|^2\nonumber\\ &\qquad+4\mathbb{E}\integ{t_i}{t_{i+1}}\Big|\displaystyle\sum_{j\geq 1}\int_{K_j}|U^\epsilon_{t}(e)-\bar{\hat{U}}_{t_{i}}(e_j)|\gamma_j\nu(de)\Big  |^2dt\nonumber\\ 
 &\qquad+4\Delta t_i\displaystyle\sum_{j\geq 1} \gamma_j^2\nu(K_j)\,\mathbb{E}\Big[\displaystyle\sum_{j\geq 1}|\bar{\hat{U}}_{t_{i}}(e_j)-\hat{\mathcal{U}}_{t_{i}}(X^{\epsilon,\Delta t}_{t_i},e_j)|^2\nu(K_j)\Big]\nonumber\\
 &\qquad+16C^2_f\Delta t_i^3\bigg(\mathbb{E}|\hat{\mathcal{Y}}_{t_{i}}%(X^{\epsilon,\Delta t}_{t_i})
	-\hat{\mathcal{V}}_{t_i}|^2+\mathbb{E}|\hat{\mathcal{Z}}_{t_{i}}%(X^{\epsilon,\Delta t}_{t_i})
	-\bar{\hat{Z}}_{t_i}|^2+|\zeta  D_e\gamma(0)\Sigma^\frac{1}{2}_\epsilon|^2\,\mathbb{E}|\hat{\mathcal{W}}_{t_{i}}%(X^{\epsilon,\Delta t}_{t_i})
	-\bar{\hat{L}}_{t_i}|^2\nonumber\\ &\hspace{4.5cm}+\displaystyle\sum_{j\geq 1}\gamma_j^2\nu(K_j)\,\mathbb{E}\Big[\displaystyle\sum_{j\geq 1}|\hat{\mathcal{U}}_{t_{i}}(X^{\epsilon,\Delta t}_{t_i},e_j)-\bar{\hat{U}}_{t_i}(e_j)|^2\nu(K_j)\Big]\bigg)\bigg].\nonumber
\end{align}
%%%ERJ
\be  
Now we use the Cauchy-Schwartz inequality and the tower property of conditional expectations to get rid of the $\mathbb{E}_i$-expectation on the right hand side and Lemma \ref{lem-errNN} to estimate all the $\hat{\mathcal{Y}}_{t_{i}}
	-\hat{\mathcal{V}}_{t_i}$, $\hat{\mathcal{Z}}_{t_{i}}
	-\bar{\hat{Z}}_{t_i}$, $\hat{\mathcal{W}}_{t_{i}}
	-\bar{\hat{L}}_{t_i}$, and $\hat{\mathcal{U}}_{t_{i}}-\bar{\hat{U}}_{t_i}$ terms. After summing over $i$ and using the definition of $I$, we then find that
	\begin{align*} 
  I_{22}\leq &\, \big(\delta+C\big(\Delta t+\frac1\delta\big)\big) T\displaystyle\max_{i=0,\dots, N-1}\mathbb{E}\big|Y^\epsilon_{t_{i}}-\tilde{\mathcal{V}}_{t_i}\big|^2+ C\big(\Delta t+\frac1\delta\big)(1+|x|^2)\Delta t+
	C_0\big(\Delta t+\frac{1}{\delta}\big) I
	\nonumber \\ 
 %&+2C_0\big(\Delta t+\frac1\delta\big)\sum_{i=0}^{N-1}\Delta t_i\bigg(\mathbb{E}|\hat{\mathcal{Y}}_{t_{i}}(X^{\epsilon,\Delta t}_{t_i})-\hat{\mathcal{V}}_{t_i}|^2+\mathbb{E}|\hat{\mathcal{Z}}_{t_{i}}(X^{\epsilon,\Delta t}_{t_i})-\bar{\hat{Z}}_{t_i}|^2\nonumber\\
% &\qquad+\zeta\,\mathbb{E}|\hat{\mathcal{W}}_{t_{i}}(X^{\epsilon,\Delta t}_{t_i})-\bar{\hat{L}}_{t_i}|^2+\mathbb{E}\Big[\displaystyle\sum_{j\geq 1}|\hat{\mathcal{U}}_{t_{i}}(X^{\epsilon,\Delta t}_{t_i},e_j)-\bar{\hat{U}}_{t_i}(e_j)|^2\gamma_j^2|\nu(K_j)|^2\Big]\bigg)\nonumber\\
 &+C_0\big(\Delta t+\frac1\delta\big)\mathcal R_\gamma^2(h)+C\big(\Delta t+\frac1\delta\big)\big(\epsilon_i^{\mathcal{N},v}+\Delta t\,\epsilon_i^{\mathcal{N},z}+\zeta\Delta t\,\epsilon_i^{\mathcal{N},l}+\Delta t\, \epsilon_i^{\mathcal{N},\rho}\big),
	\end{align*}
 where $C_0=12(1+\zeta |D_e\gamma(0)\Sigma_\epsilon^{\frac12}|^2)C_f^2$.

Going back to \eqref{estim-Z-L-U-1}, combining all the estimates, using Lemma \ref{lem-tilda-V}, taking $\Delta t\leq \frac1\delta$, and fixing $\delta$ large enough, we find that\ee
	\begin{align*}
\be \frac12I \leq  \ee C\Big(&\mathcal{R}^2_{Z^\epsilon}(\Delta t)+ \zeta\mathcal{R}^2_{L^\epsilon}(\Delta t)+\mathcal{R}^2_{U^\epsilon}(\Delta t,h)+\mathcal{R}^2_\gamma(h)+{\bs (1+|x|^2)\es}\Delta t\nonumber\\
&+\sum_{i=0}^{N-1} \big(\epsilon_i^{\mathcal{N},v}+\Delta t\,\epsilon_i^{\mathcal{N},z}+\zeta\Delta t\,\epsilon_i^{\mathcal{N},l}+\Delta t\, \epsilon_i^{\mathcal{N},\rho}\big)\Big),\nonumber
\end{align*}
and the proof is complete. 
	\end{proof}
	
	\appendix 
 \section{Proof of Theorem \ref{strong convergence-app}.}\label{app:A}
	
	\subsection*{\texorpdfstring{$X$} --component:}
    	By Cauchy-Schwartz inequalities, and the Levy-It\^{o} isometry,
	\begin{align*}
	&\mathbb{E}\big[\sup_{\be t\leq s\leq T}|X_s-X_s^\epsilon|^2\big]\nonumber\\
	&\leq \be \integ{t}{T}\ee\mathbb{E}\Big[|b(X_s)-b(X^\epsilon_s)|^2\Big]\,ds+\integ{t}{T}\mathbb{E}\Big[|\sigma(X_s)-\sigma(X^\epsilon_s)|^2\Big]\,ds\nonumber\\
	&\quad+\integ{t}{T}\int_{E^\epsilon}\mathbb{E}\Big[|\beta(X_s,e)-\beta(X^\epsilon_s,e)|^2\Big]\nu(de)\,ds\\
	&\quad+\mathbb{E}\Big[\integ{t}{T}\int_{|e|<\epsilon}\big|\beta(X_s,e)\big|^2\nu(de)\,ds + \zeta \integ{t}{T} \big| D_e\beta(X^\epsilon_{s^-},0)\Sigma^{\frac{1}{2}}_\epsilon\big|^2 ds\Big].
	\end{align*}
	By the Lipschitz assumption (H1) on $b$, $\sigma$ and $\beta$ and  by (\ref{Con-Dbeta}) we obtain 
	\begin{align*}
	\mathbb{E}\Big[\sup_{t\leq s\leq T}|X_s-X_s^\epsilon|\Big]\leq C\bigg[\integ{t}{T}\mathbb{E}\Big[|X_s-X^\epsilon_s|^2\Big]\,ds+(1+|x|^2)\sigma_\epsilon^2\bigg].
	\end{align*}
	Therefore by Gr\"onwall’s inequality, we obtain the result. $\hfill\Box$

\subsection*{\texorpdfstring{$(Y,Z,U,L)$}--components:}
	Let $\delta Y:=Y^\epsilon-Y$, $\delta Z:=Z^\epsilon-Z$, $\delta U:=U^\epsilon-U$, $\delta f:=$ $f(t,X^\epsilon,Y^\epsilon,Z^\epsilon,\Gamma^\epsilon+H^\epsilon)-f(t,X,Y,Z,\Gamma)$, \be $\Gamma^\epsilon=\int_{E}U^\epsilon(e)\gamma(e)\,\nu(de)$, and $H^\epsilon_s=\zeta D_e\gamma(0)\Sigma^{\frac{1}{2}}_\epsilon L^\epsilon_s
 %\zeta\integ{s}{T}L^\epsilon_r\, d\widetilde{W}_r
 $. \ee By applying It\^{o}'s formula to the process $|\delta Y_t|^2$, we obtain \be (note the signs and $\zeta^2=\zeta$)\ee
	\begin{align}
	&\be I:=\ee\mathbb{E}\bigg[|\delta Y_t|^2+\integ{t}{T}|\delta Z|^2_s\,ds+\be\zeta\ee\integ{t}{T} |L^\epsilon|^2_s\,ds+\integ{t}{T}\int_{E^\epsilon}|\delta U_s(e)|^2\,\nu(de)\,ds\bigg] \label{def-I}\\
&\be=\mathbb{E}\bigg[
|g(X_T)-g(X_T^\epsilon)|^2+2\integ{t}{T}\delta Y_s\,\delta f_s\,ds\bigg]-\ee\mathbb{E}\bigg[\integ{t}{T}\int_{|e|\leq\epsilon}|U_s(e)|^2\,\nu(de)\,ds\bigg]=:I_1+I_2.\nonumber
	\end{align}

By (\ref{relation-u-Y-1}) \be in Lemma \ref{u=Y} and Remark \ref{U-u nonsmooth},
%\footnote{\label{reg-fn}\be An approximation argument shows that the identity for $U$ in (\ref{relation-u-Y-1}) holds also for (non-smooth) viscosity solutions $u$ of \eqref{PIDE}. To see this consider the vanishing viscosity method: Add $\frac12\varepsilon^2 \Delta u$ to \eqref{PIDE} (and $\varepsilon \,dW_t$ to \eqref{FSDE}) to obtain a sequence of smooth (enough) solutions $u^\varepsilon$ of the modified equation converging locally uniformly as $\varepsilon\to0$ to the viscosity solution $u$ of \eqref{PIDE}. The solutions of the modified \eqref{FSDE}, $X^{\epsilon,\varepsilon}_s\to X^{\epsilon}_s$ in $\mathbb S^2_{[t,T]}$ as $\varepsilon\to0$. Since (\ref{relation-u-Y-1}) holds for $u^\varepsilon$ and $X^{\epsilon,\varepsilon}$, the result then follows from passing to the limit in the $U$-identity as $\varepsilon\to0$.\ee}
the Lipschitz continuity of $u$ in Theorem \ref{thm:lip}, and the growth condition
on $\beta$, and the $L^2$-bound on $X_s$ in \eqref{moment-Y}, \ee we have
		$$ \left|U_s(e)\right|^2=\left|u(s,X_{s^-}+\beta(X_{s^-},e))-u(s,X_{s^-})\right|^2 \leq C\Big(1+\left|x\right|^2\Big)|e|^2,$$
 \be and then by the definition of $\sigma_\epsilon$ in \eqref{def-Sigma} it follows that
\begin{align}\label{I2}
I_2\leq C(1+|x|^2)\sigma^2_\epsilon.
\end{align}\ee
  
	By the Lipschitz \be continuity (H2) \ee of $f$ and $g$ we have
	\begin{align}
 \be I_1\leq \ee C\,\mathbb{E}\bigg[&|X_T-X_T^\epsilon|^2+\integ{t}{T}|\delta Y_s|\Big(|X_s-X^\epsilon_s|+|\delta Z_s|+|\delta Y_s|+\int_{E^\epsilon}\gamma(e)|\delta U_s(e)|\,\nu(de)\Big)\,ds\nonumber\\[0.2cm]
 &
 %+\integ{t}{T}|\delta Y_s|\Big(\int_{E^\epsilon}\gamma(e)|\delta U_s(e)|\,\nu(de)\Big)\,ds
 +\integ{t}{T}|\delta Y_s|\Big(\int_{|e|\leq \epsilon}\gamma(e) |U_s(e)|\,\nu(de)+\be |H^\epsilon_s| \ee
 %|\phi_\epsilon D_e\gamma(e)L^\epsilon_s|
 \Big)\,ds\bigg].\nonumber
	\end{align}
	Using $ab\leq \alpha a^2+b^2/\alpha$, \be (H1), and the estimate for the $X$-component, Cauchy-Schwartz, and the Ito Lemma for $H^\epsilon$, \ee we then obtain
	\begin{align*}
 \be I_1\leq \ee &\ C(1+|x|^2)\sigma^2_\epsilon+C(1+\alpha)\,\mathbb{E}\integ{t}{T}|\delta Y|^2_s\,ds\\
 &+C\,\mathbb{E}\frac{1}{\alpha}\bigg[\integ{t}{T}\Big(|X_s-X^\epsilon_s|^2+|\delta Z_s|^2+\be\zeta|D\gamma(0)|^2\Sigma_\epsilon |L^\epsilon_s|^2\ee\Big)\,ds\\
 &+\int_{E^\epsilon}\gamma(e)^2\,\nu(de)\integ{t}{T}\int_{E^\epsilon}|\delta U_s(e)|^2\,\nu(de)\,ds+\int_{|e|\leq \epsilon}\gamma(e)^2\,\nu(de)\integ{t}{T}\int_{|e|\leq \epsilon} |U_s(e)|^2\,\nu(de)\,ds\bigg].\nonumber
	\end{align*}
\be By the assumptions on $\gamma$ in (H2), the definitions of $\sigma_\epsilon$ and $\Sigma_\epsilon$ in \eqref{def-Sigma}, and the estimate on $I_2$,
$$I_1\leq C(1+\tfrac1\alpha) (1+|x|^2)\sigma^2_\epsilon + C(1+\alpha)\integ{t}{T}|\delta Y|^2_s\,ds+C\tfrac1\alpha I.$$

Combining the estimates on $I_1$ and $I_2$, and taking $\alpha$ large enough, we find that
	\begin{align*}
 \be I\leq  C(1+|x|^2)\sigma^2_\epsilon+C\,\mathbb{E}\integ{t}{T}|\delta Y|^2_s\,ds.
	\end{align*}
Then by the definition of $I$ and Gronwall's lemma,
	$
	\mathbb{E}|\delta Y_t|^2\leq C(1+|x|^2)\sigma^2_\epsilon,$
	and then 
	\begin{align}\label{esti-Y-epsi}I\leq C(1+|x|^2)\sigma^2_\epsilon.
	\end{align}

It only	remains to estimate
	\begin{align*} I_3:=\mathbb{E}\bigg|\zeta\integ{0}{T} L^\epsilon_sd\widetilde{W}_{s}-\integ{0}{T}\int_{|e|\leq\epsilon}U_s(e)\tilde{\mu}(de,ds)\bigg|^2,
 \end{align*}
 %The $L^\epsilon$-identity in (\ref{relation-u-Y-1})\footnote{\be As in footnote \ref{reg-fn}, we need a regularization argument to use (\ref{relation-u-Y-1}). In this case the $L^\epsilon$ identity in (\ref{relation-u-Y-1}) will not hold for mere viscosity solutions. Instead, we can show Theorem \ref{strong convergence-app} for vanishing viscosity approximations $(X^{\epsilon,\varepsilon},Y^{\epsilon,\varepsilon},Z^{\epsilon,\varepsilon},L^{\epsilon,\varepsilon},U^{\epsilon,\varepsilon})$ and then pass to the limit in the inequality using the strong convergence of these approximations (which follow from a direct estimates).} and the Lipschitz continuity of $u$ in Theorem \ref{thm:lip}, in combination with 
 which by the triangle inequality, Levy-Ito isometry, and estimates \eqref{I2}, (\ref{esti-Y-epsi}) is bounded by
\begin{align}\label{I3}
I_3&\leq C \,\mathbb{E}\bigg[\zeta\integ{0}{T}|L^\epsilon_s|^2ds+\integ{0}{T}\int_{|e|\leq\epsilon}|U_s(e)|^2\nu(de)ds\bigg]\leq C(I+I_2)\leq C(1+|x|^2)\sigma_\epsilon^2.
	\end{align}
By \eqref{def-I}, \eqref{esti-Y-epsi}, and \eqref{I3}, the proof of Theorem \ref{strong convergence-app} is complete. $\hfill\Box$\ee
	
\section{Proof of (\ref{strong error-U})}\label{appB}
From (\ref{U-esti-1}) and by using Cauchy-Schwarz inequalities for conditional expectations we get 
\begin{equation*}
	\begin{array}{l}
	\Delta t_i\nu(K_j)\,\mathbb{E}\displaystyle|\pi_ {[t_i,t_{i+1}],K_j}(U^\epsilon)-\bar{\hat{U}}_{t_i}(e_j)|^2\\ \leq 2\bigg(\mathbb{E}|Y^\epsilon_{t_{i+1}}-\tilde{\mathcal{V}}_{t_{i+1}}|^2-\mathbb{E}\big|\mathbb{E}_i[Y^\epsilon_{t_{i+1}}-\tilde{\mathcal{V}}_{t_{i+1}}]\big|^2\bigg)\\  \quad+2\Delta t_i\,\mathbb{E}\bigg[\integ{t_i}{t_{i+1}}\bigg|f\bigg(t,X^\epsilon_t,Y^\epsilon_t,Z^\epsilon_t,\int_{E^\epsilon}U^\epsilon_t(e)\gamma(e)\nu(de)+\zeta D_e\gamma(0)\Sigma^{\frac{1}{2}}_\epsilon L^\epsilon_t\bigg)\bigg|^2dt\bigg].
	\end{array}
	\end{equation*}
	Then multipling by $\gamma_j^2\nu(K_j)$ and summing over j,
	\begin{equation*}
		\begin{array}{l}
	\Delta t_i\,\mathbb{E}\bigg[\displaystyle\sum_{j\geq 1}|\pi_ {[t_i,t_{i+1}],K_j}(U^\epsilon)-\bar{\hat{U}}_{t_i}(e_j)|^2\gamma_j^2|\nu(K_j)|^2\bigg]\\ \leq 2\displaystyle\sum_{j\geq 1}\gamma_j^2\nu(K_j)\bigg(\mathbb{E}|Y^\epsilon_{t_{i+1}}-\tilde{\mathcal{V}}_{t_{i+1}}|^2-\mathbb{E}\big|\mathbb{E}_i[Y^\epsilon_{t_{i+1}}-\tilde{\mathcal{V}}_{t_{i+1}}]\big|^2\bigg)\\  \quad+2\Delta t_i\displaystyle\sum_{j\geq 1}\gamma_j^2\nu(K_j)\,\mathbb{E}\bigg[\integ{t_i}{t_{i+1}}\bigg|f\bigg(t,X^\epsilon_t,Y^\epsilon_t,Z^\epsilon_t,\int_{E^\epsilon}U^\epsilon_t(e)\gamma(e)\nu(de)+\zeta D_e\gamma(0)\Sigma^{\frac{1}{2}}_\epsilon L^\epsilon_t\bigg)\bigg|^2dt\bigg].
	\end{array}
	\end{equation*}
By adding and subtracting projection to $|U^\epsilon_t(e)-\bar{\hat{U}}_{t_i}(e_j)|^2$, using the last inequality, the definition of $\mathcal{R}^2$, and summing over $i$,
we obtain
	\begin{eqnarray}
	&&\mathbb{E}\bigg[\displaystyle\sum_{i=0}^{N-1}\integ{t_i}{t_{i+1}}\bigg|\displaystyle\sum_{j\geq 1}\int_{K_j}|U^\epsilon_{t}(e)-\bar{\hat{U}}_{t_i}(e_j)|^2\gamma_j^2\nu(K_j)\nu(de)dt\bigg] \nonumber \\&&\leq 4\Delta t\displaystyle\sum_{j\geq 1}\gamma_j^2\nu(K_j)\,\mathbb{E}\bigg[\displaystyle\sum_{i=0}^{N-1}\integ{t_i}{t_{i+1}}\bigg|f\bigg(t,X^\epsilon_t,Y^\epsilon_t,Z^\epsilon_t,\int_{E^\epsilon}U^\epsilon_t(e)\gamma(e)\nu(de)\nonumber\\ & &\quad+\zeta D_e\gamma(0)\Sigma^{\frac{1}{2}}_\epsilon L^\epsilon_t\bigg)\bigg|^2dt\bigg]+4\displaystyle\sum_{j\geq 1}\gamma_j^2\nu(K_j)\displaystyle\sum_{i=0}^{N-1}\bigg(\mathbb{E}|Y^\epsilon_{t_{i+1}}-\tilde{\mathcal{V}}_{i+1}|^2-\mathbb{E}\big|\mathbb{E}_i[Y^\epsilon_{t_{i+1}}-\tilde{\mathcal{V}}_{i+1}]\big|^2\bigg)\nonumber\\ & &\quad+2\mathcal{R}^2_{U^\epsilon}(\Delta t,h).\nonumber
	\end{eqnarray}
	Then by changing the indices for the last sum below, and recalling (\ref{condi-f}), we obtain
	\begin{eqnarray}\label{com-Z-U-esti1}
	&&\mathbb{E}\bigg[\displaystyle\sum_{i=0}^{N-1}\integ{t_i}{t_{i+1}}\bigg[\displaystyle\sum_{j\geq 1}\int_{K_j}|U^\epsilon_{t}(e)-\bar{\hat{U}}_{t_i}(e_j)|^2\gamma_j^2\nu(K_j)\nu(de)\bigg]dt\bigg]  \\&&\leq C\bigg(\mathcal{R}^2_{U^\epsilon}(\Delta t,h)+\Delta t\bigg)+4\displaystyle\sum_{j\geq 1}\gamma_j^2\nu(K_j)\,\mathbb{E}|g(X^\epsilon_T)-g(X^{\epsilon,\Delta t}_T)|^2\nonumber\\ & &\quad+4\bigg(1+\zeta+\displaystyle\sum_{j\geq 1}\gamma_j^2\nu(K_j)\bigg)\displaystyle\sum_{i=0}^{N-1}\bigg(\mathbb{E}|Y^\epsilon_{t_{i}}-\tilde{\mathcal{V}}_{t_i}|^2-\mathbb{E}\big|\mathbb{E}_i[Y^\epsilon_{t_{i+1}}-\tilde{\mathcal{V}}_{i+1}]\big|^2\bigg).\nonumber
	\end{eqnarray}
Now, by following the same arguments as in proof of Theorem \ref{theo-convergence1}, we obtain the result. $\Box$
	\section*{Acknowledgements}
E.R.J. received funding from the Research Council of
Norway under Grant Agreement No. 325114 “IMod. Partial differential equations,
statistics and data: An interdisciplinary approach to data-based modelling”.

\end{document}